\newtheorem{theorem}{Theorem}[section]
\newtheorem{lemma}[theorem]{Lemma}
\newtheorem{proposition}[theorem]{Proposition}
\newtheorem{corollary}[theorem]{Corollary}
\theoremstyle{definition}
\theoremstyle{remark}
\newtheorem{remark}[theorem]{Remark}
\numberwithin{equation}{section}
\newcommand{\R}{{\mathbb R}}
\newcommand{\N}{{\mathbb N}}
\newcommand{\Z}{{\mathbb Z}}
\newcommand{\D}{\mathcal{D}}
\newcommand{\e}{\epsilon}
\newcommand{\g}{\gamma}
\newcommand{\B}{\mathcal{B}}
\title[Parabolic Problem]{Porous Medium flow with both a fractional potential pressure and fractional time derivative}
\author{Mark Allen}
\address{Department of Mathematics, The University of Texas at Austin,
Austin, TX 78712, USA}
\email{mallen@math.utexas.edu}
\author{Luis Caffarelli}
\address{Department of Mathematics, The University of Texas at Austin,
Austin, TX 78712, USA}
\email{caffarel@math.utexas.edu}
\author{Alexis Vasseur}
\address{Department of Mathematics, The University of Texas at Austin,
Austin, TX 78712, USA}
\email{vasseur@math.utexas.edu}
\begin{document}

\begin{abstract}
 We study a porous medium equation with right hand side. The operator has nonlocal diffusion effects given by an inverse fractional Laplacian operator. 
 The derivative in 
 time is also fractional of Caputo-type and which takes into account ``memory''. The precise model is
  \[
   \D_t^{\alpha} u - \text{div}(u(-\Delta)^{-\sigma} u) = f, \quad 0<\sigma <1/2.
  \]
 We pose the problem over $\{t\in \R^+, x\in \R^n\}$ with nonnegative initial data $u(0,x)\geq 0 $ as well as right hand side $f\geq 0$. 
 We first prove  
 existence for weak solutions when $f,u(0,x)$ have exponential decay at infinity.  Our main result is H\"older continuity for such weak solutions. 
\end{abstract}

\maketitle

\section{Introduction}
 In this paper we study both existence and regularity for solutions to a porous medium equation. The pressure is related to the density via a nonlocal
 operator. This diffusion takes into account long-range effects. The time derivative is nonlocal and fractional and therefore takes into account the past. In 
 the typical derivation of the porous medium equation (see \cite{v07}) the equation one considers is 
  \[
   \partial_t u + \text{div} (\textbf{v}u)=0,
  \] 
 with $u(t,x)\geq 0$. By Darcy's law in a porous medium  $\textbf{v}= -\nabla p$ arises as a potential where $p$ is the pressure. According to a state law
 $p=f(u)$. In our case we consider a potential which takes into account long range interactions, namely $p=(-\Delta)^{-\sigma}u$. A porous medium equation with
 a pressure of this type 
  \begin{equation}   \label{e:tloc}
   \partial_t u = \text{div}(u(-\Delta)^{-\sigma}u)
  \end{equation}
 has been recently studied. For $0<\sigma<1$ with $\sigma \neq 1/2$, existence of solutions was shown in \cite{cv11} 
 while regularity and further existence properites were studied in \cite{cfv13}. 
 Uniqueness for the range $1/2\leq \sigma <1$ was shown in \cite{zxc14}. Another model of the porous medium equation
  \[
   D_t^{\alpha} u - \text{div}(\kappa (u)Du)=f
  \]
 was introduced by Caputo in \cite{c99}. In the above equation $D_t^{\alpha}$ is the Caputo derivative and the diffusion is local. 
 Solvability for a more general equation was recently studied in 
 \cite{z12}. The fractional derivative takes into account models in which there is ``memory''. The Caputo derivative has also been recently shown
 (see \cite{d04}, \cite{d05} ) to be 
 effective in modeling problems in plasma transport. See also \cite{mk00} and \cite{z02} for further models that utilize fractional equations
 in both space and time to account for long-range interactions as well as the past.

 The specific equation we study is 
  \begin{equation}  \label{e:e}
    \D_t^{\alpha} u(t,x) - \text{div} \left(u \nabla (-\Delta)^{-\sigma} u \right)=f(t,x).
  \end{equation}
 The operator $\D_t^{\alpha}$ is of Caputo-type and is defined by 
  \[
   \D_t^{\alpha} u := \frac{\alpha}{\Gamma(1-\alpha)}\int_{-\infty}^t [u(t,x)-u(s,x)]K(t,s,x) \ ds.  
  \]
 When $K(t,s,x)=(t-s)^{-1-\alpha}$ this is exactly the Caputo derivative - see Section \ref{s:caputo} - which we denote by $D_t^{\alpha}$. 
 We assume the following bounds on the kernel $K$ 
  \begin{equation}  \label{e:kernelb}
   \frac{1}{\Lambda(t-s)^{1+\alpha}} \leq \frac{\alpha}{\Gamma(1-\alpha)} K(t,s,x) \leq \frac{\Lambda}{(t-s)^{1+\alpha}}
  \end{equation}
 Our kernel in time then can be thought of as having ``bounded, measureable coefficients''. We also require the following relation on the kernel
  \begin{equation}  \label{e:kernel}
   K(t,t-s)=K(t+s,t).
  \end{equation}
 The relation \eqref{e:kernel} allows us to give a weak - in space and in time - formulation of \eqref{e:e}. This weak formulation is given 
 in Section \ref{s:caputo}. 
 
 In this paper we also restrict ourselves to the range $0<\sigma<1/2$. In \cite{cfv13} use of a transport term was made to work in the range $1/2<\sigma<1$. 
 We have not yet found the correct manner in which to prove our results for $1/2<\sigma<1$ when dealing with the nonlocal fractional time derivative $\D_t^{\alpha}$.

 \subsection{Accounting for the Past.}
  Nonlocal equations are effective in taking into account long-range interactions and taking into account the past. However, 
  the nonlocal aspect of the equation provides both advantages and disadvantages in studying local aspects of the equation. One advantage is that there is
  a relation between two points built into the equation. Indeed, we utilize two nonlocal terms that are not present in the classical porous medium equation
  to prove Lemma \ref{l:2down}. One disadvantage of nonlocal equations is that when rescaling of the form $v(t,x)=Au(Bt,Cx)$, the far away portions of $u$ cannot
  be discarded, and hence $v$ begins to build up a ``tail''. Consequently, the usual test function $(u-k)_+$ or $F((u-k)_+)$ for some function 
  $F$ and a constant $k$ is often insufficient. One must instead consider $F((u-\phi)_+)$ where $\phi$ is constant close by but has some ``tail'' growth at infinity. 
  This difficulty of course presents itself with the Caputo derivative. One issue becomes immediately apparent. If we choose $F((u-\phi)_+)$ as  a test function, then 
   \[
    \begin{aligned}
    \int_a^T F((u-\phi)_+) D_t^{\alpha} u \ dt &= \int_a^T F((u-\phi)_+) D_t^{\alpha} ((u-\phi)_+ - (u-\phi)_- ) \ dt \\
                                               &\quad + \int_a^T F((u-\phi)_+) D_t^{\alpha} \phi \ dt. 
    \end{aligned}
   \]
  The second term will no longer be identically zero if $\phi$ is not constant. When using energy methods, this second term can be treated as part of the right 
  hand side, and hence it becomes natural to consider an equation of the form \eqref{e:e} with a right hand side. The main challenge with accomodating a nonzero
  right hand side is that the natural test function $\ln u$ used in \cite{cv11} and \cite{cfv13} is no longer available since the function $u$ can evaluate zero. 
  Indeed if the initial data for a solution is compactly supported, then the solution is compactly supported for every time $t>0$ (see Remark \ref{r:finite}). 
  We choose as our basic
  test function $u^{\gamma}$ for $\gamma>0$. For $\sigma$ small we will have to choose $\gamma$ small. 
  We then can accomodate a right hand side as well as avoiding delicate integrability issues involved when using $\ln u$
  as a test function. Using careful analysis, it is still most likely possible to utilize $\ln u$ as a test function for our equation \eqref{e:e} with
  zero right hand side, but we find it more convenient to use $u^{\gamma}$ and prove the stronger result that includes a right hand side. Our method using $u^{\g}$
  should also work for the equation \eqref{e:tloc} to be able to prove existence and regularity with a right hand side. 
  One benefit of accomodating a right hand side in $L^{\infty}$
  is that we obtain immediately regularity up to the initial time for smooth initial data, see Theorem \ref{t:continuity}.

 \subsection{Overview of the Main results}
  We will prove our results for a class of weak solutions \eqref{e:main} later formulated in Section \ref{s:caputo}.  
  Our first main result is existence. We use an approximating scheme as in \cite{cv11} as well as discretizing in time as in \cite{acv15}. We prove
   \begin{theorem}   \label{t:existence}
    Let $0\leq u_0(x), f(t,x) \leq Ae^{-|x|}$ for some $A \geq 0$. Then there exists a solution $u$ to \eqref{e:main} in $(0,\infty)$
    that has initial data $u(0,x)=u_0(x)$. 
   \end{theorem}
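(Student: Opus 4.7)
The plan is to construct solutions via a double approximation: first discretize in time with step size $h > 0$ to turn the Caputo-type equation into a sequence of stationary problems (following \cite{acv15}), and within each time step regularize the spatial operator as in \cite{cv11} by replacing $(-\Delta)^{-\sigma}$ with a smoothed version $\K_\varepsilon$ (for instance, convolving the Riesz kernel with a standard mollifier) and adding a small viscosity $-\varepsilon\Delta$. Because $K(t,s,x)$ satisfies the compatibility \eqref{e:kernel}, the discrete Caputo operator at time $t_n = nh$ can be written as $\sum_{k \le n} a_{n,k}(u^n - u^k)$ with nonnegative weights $a_{n,k}$ comparable to $h^{-\alpha}$ along the diagonal. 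This turns the scheme at each step into an elliptic problem of the form
\[
 a_{n,n} u^n - \varepsilon \Delta u^n - \text{div}\bigl(u^n \nabla \K_\varepsilon u^n\bigr) = f^n + \sum_{k<n} a_{n,k}u^k,
\]
which is uniformly elliptic with bounded lower-order coefficients once $u^k$ for $k<n$ and $\K_\varepsilon u^n$ are under control.

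For solvability of each step I would use a Schauder or Leray--Schauder fixed point: freeze $\K_\varepsilon u^n$ in the divergence term, solve the resulting linear elliptic equation for $u^n$, and set up a continuous compact map whose fixed points are solutions. Nonnegativity at each step follows from the maximum principle, since the right-hand side inherits nonnegativity inductively from $f\ge 0$ and $u_0\ge 0$, and the absorption coefficient $a_{n,n}>0$ makes the elliptic operator coercive. An $L^\infty$ bound of the form $\|u^n\|_\infty \le C(T)$ can be obtained by testing against $(u^n-M)_+$ and using that the drift term has the favorable sign structure exploited in \cite{cv11}, together with the discrete Gr\"onwall-type inequality that the memory sum provides through \eqref{e:kernelb}.

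The exponential decay at infinity is the most delicate a priori estimate and the main obstacle. I would construct an explicit barrier $\Phi(t,x) = B(t)e^{-|x|}$, choose $B$ solving a scalar ODE/discrete inequality with the fractional time operator, and check that $\Phi$ is a supersolution of the discretized, regularized equation provided the mollification parameters are sufficiently small; nonlocality of $(-\Delta)^{-\sigma}$ forces one to control how much mass the kernel pulls in from infinity onto a point, which is exactly where the restriction $\sigma<1/2$ helps because $\nabla(-\Delta)^{-\sigma}$ has an integrable tail against $e^{-|x|}$. With this barrier a comparison argument at the discrete level gives $u^n(x) \le \Phi(nh,x)$, preserving exponential decay uniformly in $h,\varepsilon$.

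With these bounds I would pass to the limit in two stages. Define piecewise-constant (or affine) interpolants $u_h^\varepsilon(t,x)$ in time. Energy estimates obtained by testing the equation against $(u^n)^\gamma$, as outlined in the introduction for $\gamma>0$ small, yield uniform bounds on $\nabla(-\Delta)^{-\sigma/2}(u^{(1+\gamma)/2})$ in $L^2$ together with a bound on the fractional time increments, which via a discrete Riesz--Fr\'echet--Kolmogorov or Aubin--Lions-type argument adapted to the Caputo derivative (as in \cite{acv15}) delivers strong $L^1_{\text{loc}}$ compactness. The exponential tail from the barrier upgrades this to strong convergence globally in space. First let $\varepsilon\to 0$ to recover the unregularized elliptic operator at each step, then $h\to 0$ to recover $\D_t^\alpha$ in the weak formulation of Section~\ref{s:caputo}; the compatibility \eqref{e:kernel} is what ensures the discrete memory sum converges to the correct continuous fractional derivative tested against smooth functions. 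Verifying that the nonlinear term $u\nabla(-\Delta)^{-\sigma}u$ passes to the limit is standard once the strong convergence and decay are in hand, since $\nabla(-\Delta)^{-\sigma}$ is a bounded operator on weighted $L^p$ spaces in the range $0<\sigma<1/2$.
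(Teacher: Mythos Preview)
Your strategy matches the paper's almost exactly: discretize in time, regularize the spatial kernel and add viscosity, solve each step by a Schauder fixed point, get $L^\infty$ and exponential-decay barriers, derive energy estimates by testing against a power of $u$, and pass to the limit via a fractional Aubin--Lions/Kolmogorov compactness argument. Two technical points are worth flagging where the paper does more than your sketch.

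First, the paper introduces two further regularization parameters you omit: it works on a bounded ball $B_R$ with zero Dirichlet data (sending $R\to\infty$ later) and it lifts the degeneracy by replacing $u$ with $u+d$ in the diffusion coefficient (sending $d\to 0$ later). The bounded domain is what makes the Schauder fixed-point map genuinely compact; on all of $\R^n$ you would need to build that compactness by hand from the decay, which is possible but not automatic. The $d$-lift is less essential given your viscosity, but it also enters the choice of test function $(u+d)^\gamma - d^\gamma$.

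Second, the energy you describe is not quite the one the scheme actually produces. Testing against $u^\gamma$ and using the bilinear identity $\B(u^{\gamma+1},u)\ge c\int\int |u(x)-u(y)|^{2+\gamma}/|x-y|^{n+2-2\sigma}$ gives a bound in $W^{(2-2\sigma)/(2+\gamma),\,2+\gamma}$, not an $L^2$-based bound on $(-\Delta)^{(1-\sigma)/2}u^{(1+\gamma)/2}$. This distinction matters at the limit step: the paper passes the nonlinear term to the limit by combining strong $L^p$ convergence of $u$ with \emph{weak} convergence of $\nabla(-\Delta)^{-\sigma}u$ in $L^{2+\gamma}$, and the latter is obtained from the $(2+\gamma)$-exponent energy via a Besov lifting (Lemma~\ref{l:triebel}), not from weighted $L^p$ boundedness of $\nabla(-\Delta)^{-\sigma}$ alone. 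Your final sentence glosses over exactly the place where the $2+\gamma$ integrability is needed.
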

  
   \begin{remark}  \label{r:unique}
    Our constructions are made via recursion over a finite time interval $(0,T_1)$. Since our constructions are made via recursion, 
    if $T_2 =mT_1$ for $m \in \N$, it is immediate that if $u_i$ is the solution constructed on $(0,T_i)$, then $u_2 = u_1$ on $(0,T_1)$. 
   \end{remark}
   
   \begin{remark}  \label{r:n1}
    For technical reasons seen in the proof of Lemma \ref{l:triebel}, when $n=1$ we make the further restriction $0<\sigma<1/4$. 
   \end{remark}
  
  The main result of the paper is an interior H\"older regularity result. As expected the H\"older norm will depend on the distance from the interior domain
  to the initial time $t_0$. However, if we assume the intial data $u_0$ is regular enough - say for intance $C^2$ - then we obtain regularity up to the 
  initial time. This is a benefit of allowing a right hand side. By extending the values of our solution $u(t,x)=u(0,x)$ for $t<0$, we satisfy \eqref{e:main}
  on $(-\infty, \infty) \times \R^n$ with a right hand side in $L^{\infty}$. The right hand side $f$ for $t \leq 0$ will not necessarily satisfy $f \geq 0$; however,
  this nonnegativity assumption on $f$ was only necessary to guarantee the existence of a solution $u \geq 0$. It is not a necessary assumption to prove regularity. 
  From Remark \ref{r:unique} the solution constructed on $(-\infty, T)$ will agree with the original solution over the interval $(0,T)$. 
  
   \begin{theorem}  \label{t:continuity}
    Let $u$ be a solution to \eqref{e:main} obtained via approximation from Theorem \ref{t:existence} on $[0,T] \times \R^n$ with
    $0\leq u_0(x), f(t,x) \leq Ae^{-|x|}$. Assume also $u_0 \in C^2$. 
    Then $u$ is $C^{\beta}$ continuous on $[0,T]\times \R^n$- for some exponent $\beta$ depending on $\alpha, \Lambda, n, \sigma$ - 
     with a constant that depends on the $L^{\infty}$ 
    norm of $u$ and $f$ and  $C^2$ norm of $u_0$.  
   \end{theorem}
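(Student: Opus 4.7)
The plan is to deduce the theorem from an interior H\"older estimate combined with an extension trick that transfers interior regularity all the way up to the initial slice. First, extend $u$ by $u(t,x) := u_0(x)$ for $t < 0$. Because $u_0 \in C^2$ with exponential decay, $(-\Delta)^{-\sigma} u_0$ is smooth, so $\text{div}(u_0 \nabla (-\Delta)^{-\sigma} u_0)$ is bounded; meanwhile $\D_t^\alpha u$ on either side of $t = 0$ is controlled by \eqref{e:kernelb} and the $C^2$ bound on $u_0$ (the singular contribution near $s = t$ is absorbed because $u - u_0$ vanishes to first order at $t = 0^+$). Consequently the extended $u$ satisfies \eqref{e:main} on $(-\infty, T] \times \R^n$ with a new right-hand side $\tilde f \in L^\infty$ controlled by $\|f\|_{L^\infty}$ and $\|u_0\|_{C^2}$; by Remark \ref{r:unique} the extended solution coincides with the original on $(0,T)$. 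It therefore suffices to prove an interior H\"older estimate on parabolic cylinders around arbitrary points of $[0,T]\times\R^n$, since every such point now sits in the interior of the extended space-time domain.

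For the interior regularity I would follow a De Giorgi-style oscillation-reduction scheme adapted to the two sources of nonlocality. Fix a base point $(t_0,x_0)$ and a parabolic cylinder $Q_r$ on which $u$ is bounded. Following the discussion in the introduction, the natural family of test functions is $F'((u - \phi)_+)$ with $F(s) \sim s^{\gamma+1}$ for small $\gamma > 0$ and $\phi$ a smooth barrier that is constant near $x_0$ but grows slowly at infinity so as to absorb the tail of $u$. Convexity of $F$ combined with the Caputo representation gives $F'((u - \phi)_+) \, \D_t^\alpha u \geq \D_t^\alpha F((u - \phi)_+) - \mathcal{E}_\phi$, where the barrier error $\mathcal{E}_\phi$ is bounded by virtue of \eqref{e:kernelb} and is hence absorbed by $\tilde f$. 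Combined with the fractional Sobolev dissipation generated by the pressure term $-\text{div}(u \nabla (-\Delta)^{-\sigma} u)$, this produces a Caccioppoli-type inequality controlling simultaneously a time $H^{\alpha/2}$-type seminorm and a spatial fractional Sobolev norm of a power of $(u - \phi)_+$.

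From that energy inequality one extracts the two standard De Giorgi lemmas. The first is a measure-to-pointwise step: if $\{u > k\}$ occupies a sufficiently small fraction of $Q_r$, then $u \leq k + Cr^\mu$ on $Q_{r/2}$. The second is Lemma \ref{l:2down}, the intermediate-level estimate, which exploits \emph{both} nonlocal dissipations to force one of two intermediate sublevel sets to shrink on a further subcylinder. Combining them yields an oscillation-decay bound $\text{osc}_{Q_{r/2}} u \leq \theta \, \text{osc}_{Q_r} u + Cr^\mu$ with $\theta < 1$, and dyadic iteration produces a H\"older exponent $\beta = \beta(\alpha, \Lambda, n, \sigma)$ together with the stated constant, which depends on $\|u\|_{L^\infty}$, $\|\tilde f\|_{L^\infty}$, and hence on $\|u_0\|_{C^2}$ and $\|f\|_{L^\infty}$.

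The main obstacle is Lemma \ref{l:2down}: extracting a genuinely quantitative intermediate-level gain from two nonlocal terms in the presence of a tail, a nonzero right-hand side, and the weaker test function $F'((u - \phi)_+)$ that the possible vanishing of $u$ forces upon us in place of $\ln u$. This is where the constraint $\sigma < 1/2$ (and $\sigma < 1/4$ when $n = 1$) becomes essential: it is what makes the spatial Sobolev embedding produced by $(-\Delta)^{-\sigma}$ strong enough to beat the tail contributions introduced by $\phi$ and to mesh with the time-fractional control supplied by $\D_t^\alpha$, so that an honest isoperimetric gain survives in the weak formulation of \eqref{e:main}.
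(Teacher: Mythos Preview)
Your extension of $u$ by $u_0$ for $t<0$ to reduce to an interior estimate is exactly what the paper does, and your identification of $F(s)\sim s^{\gamma+1}$ as the replacement for $\ln u$ is also correct. The gap is in the iteration itself.

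You assert that the measure-to-pointwise step together with Lemma~\ref{l:2down} yields a full oscillation decay $\operatorname{osc}_{Q_{r/2}} u \leq \theta\,\operatorname{osc}_{Q_r} u + Cr^\mu$. But both of those ingredients are \emph{one-sided}: they lower $\sup u$ under a measure hypothesis on $\{u>k\}$ and say nothing about $\inf u$. In a uniformly parabolic problem one recovers the symmetric lower bound by applying the same argument to $M-u$; here the diffusion coefficient is $u$ itself, so $M-u$ solves a different equation, and when $\inf u$ is near $0$ the problem degenerates and no analogue of the lower estimate survives. Concretely, at any step of the iteration where $u$ is \emph{large} on most of the cylinder, the hypothesis of your sup-reduction lemma fails and your scheme stalls. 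Your description of Lemma~\ref{l:2down} as an ``intermediate-level estimate'' forcing ``one of two intermediate sublevel sets to shrink'' already presumes a symmetry that is not there.

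The paper resolves this with a dichotomy you do not mention, built on Lemma~\ref{l:pullup}: if $u\geq 1/2$ on most of $\Gamma_4$, then $u\geq\mu_0>0$ on $\Gamma_1$. The iteration in Section~\ref{s:regularity} then runs as two alternatives. Either the hypothesis of Lemma~\ref{l:2down} holds and one lowers the supremum and rescales (Alternative~1; if this persists forever one gets H\"older regularity with $u(p)=0$), or it fails, in which case Lemma~\ref{l:pullup} applies and $u\geq\mu_0$ on the smaller cylinder (Alternative~2). After a pull-up the rescaled function solves an equation with $D(v)=d_1v+d_2$, $d_2\geq 1/2$, which is non-degenerate, and the same one-sided iteration continues. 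This is why the paper proves every lemma for the two-parameter family $D(u)=d_1u+d_2$ rather than just $D(u)=u$, and why it flags Lemma~\ref{l:pullup} --- not Lemma~\ref{l:2down} --- as ``the most technically difficult'': it is the step that confronts the degeneracy head-on, via the test function $-[((u+d)/(\phi+d))^\gamma-1]_-$ and the auxiliary comparison Lemma~\ref{l:control}.

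A minor correction: the restriction $\sigma<1/2$ is not special to Lemma~\ref{l:2down}. It is used throughout the energy estimates (one needs $\tau<1-2\sigma$ for tail integrability and $2\sigma>\gamma/(1+\gamma)$ in the H\"older splittings of Section~\ref{s:pullup}); the additional constraint $\sigma<1/4$ for $n=1$ comes from Lemma~\ref{l:triebel} and concerns existence, not the regularity iteration.
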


 \subsection{Future Directions}
  We prove existence and regularity for solutions obtained via limiting approximations. In this paper we do not address the issue of uniqueness. As mentioned earlier, 
  uniqueness for \eqref{e:tloc} for the range $1/2\leq \sigma <1$ was shown in \cite{zxc14}. The issue of uniqueness for \eqref{e:e} is not trivial
  because of the nonlinear aspect of the equation as well as the lack of a comparison principle. The equation \eqref{e:main} which we consider also should present new 
  difficulties because of the weak/very-weak formulation in time as well as the minimal ``bounded, measurable'' assumption \eqref{e:kernelb} on the kernel $K(t,s,x)$. 
  An interesting problem would be to then address the issue of uniqueness for solutions of \eqref{e:main}. 
  
  Theorems \ref{t:existence} and \ref{t:continuity} can most likely be further refined by making less assumptions on $u(0,x)$, 
  assuming a right hand side $f \in L^p$ as was done for a similar problem in \cite{z12}, and proving the estimates uniform as $\sigma \to 0$ and 
  recoving H\"older continuity for the local diffusion problem. Also, as mentioned earlier the Theorems can be improved to include the range
  $1/2\leq \sigma <1$. 
  
  Finally, just like in the local porous medium equation \cite{v07} as well as for \eqref{e:tloc}, the equation \eqref{e:main} has the property of finit propagation,
  see Remark \ref{r:finite}. 
  Therefore, it is of interest to study the free boundary $\partial \{u(t,x)>0\}$. 
      
 \subsection{Outline} 
  The outline of this paper will be as follows. In Section \ref{s:caputo} we state basic results for the Caputo derivative. We also give the weak formulation of
  the equation we study. In Section \ref{s:discretize} we state some results for the discretized version of $\D_t^{\alpha}$ that we will use to prove the existence
  of solutions. In Section \ref{s:existence} we follow the method approximation method and use the estimates from \cite{cv11} combined with the method of discretization
  and the estimates presented in \cite{acv15} to prove existence. In Section \ref{s:lemmas} we state 
  the main Lemmas that we will need to be able to prove H\"older regularity. In Section \ref{s:pullup} we prove the most technically difficult Lemma \ref{l:pullup}
  of 
  the paper. This Lemma \ref{l:pullup} most directly handles the degenerate nature of the problem. In Section \ref{s:pulldown} we prove an analogue of \ref{l:pullup}.
  In Section \ref{s:oscillation} we prove the final Lemmas we need which give a one-sided decrease in oscillation from above. The one-sided decrease in oscillation
  combined with Lemma \ref{l:pullup} is enough to prove the H\"older regularity and this is explained in Section \ref{s:regularity}.

 \subsection{Notation}
  We list here the notation that will be used consistently throughout the paper. The following letters are 
  fixed throughout the paper and always refer to:
   \begin{itemize}

   \item  $\alpha$ - the order of the Caputo derivative.
   
   \item  $\sigma$ - the order of inverse fractional Laplacian $(-\Delta)^{-\sigma}$.  We use $\sigma$
    for the order because $s$ will always be a variable for time. 
   
   \item $a$ -  the initial time for which our equation is defined.
   
   \item $D_t^{\alpha}$ - the Caputo derivative as defined in Section \ref{s:caputo}.
   
   \item $\D_t^{\alpha}$ - the Caputo-type fractional derivative with ``bounded, measurable'' coefficients with bounds \eqref{e:kernelb} and
   relation \eqref{e:kernel}. 
   
   \item $\Lambda$ the constant appearing in \eqref{e:kernelb}.
      
   \item $\D_{\e}^{\alpha}$ - the discretized version of $\D_t^{\alpha}$ as defined in \eqref{e:dc} 
   
   \item $\e$ - will always refer to the time length of the discrete approximations 
   as defined in Section \ref{s:discretize}
   
   \item $n$ - will always refer to the space dimension. 
  
   \item $\Gamma_m$ - the parabolic cylinder $(-m,0)\times B_m$. 
   
   \item $W^{\beta,p}$ - the fractional Sobolev space as defined in \cite{dpv12}. 
   \item $u_{\pm}$ - the positive and negative parts respectively so that $u = u_+ - u_-$. 
   \item $\tilde{u}$ -  the extension $\tilde{u}(t)=u(\e j)$ for $\e j-1 < t, \e j$.        
 \end{itemize}

\section{Caputo Derivative} \label{s:caputo}
 In this section we state various properties of the Caputo derivative that will be useful.
 The Caputo derivative for $0<\alpha<1$ is defined by 
  \[
   _{a}D_{t}^{\alpha} u(t) := \frac{1}{\Gamma(1-\alpha)} \int_{a}^{t} \frac{u'(s)}{(t-s)^{\alpha}} \ ds
  \]
 By using integration by parts we have 
  \begin{equation} \label{e:fracalt}
    \Gamma(1-\alpha) \ _{a}D_{t}^{\alpha} u(t) = 
   \frac{u(t)-u(a)}{(t-a)^{\alpha}} + \alpha \int_{a}^t \frac{u(t)-u(s)}{(t-s)^{\alpha + 1}} \ ds.
  \end{equation}
 For the remainder of the paper we will drop the subscript $a$ when the initial point is understood. 
 We now recall some properties of the Caputo derivative that were proven in \cite{acv15}.
  

  
 
 For a function $g(t)$ defined on $[a,t]$, it is advantageous to define  $g(t)$ for $t<a$. Then we have the formulation 
  \[
   _a D_t^{\alpha} g(t) = \  _{-\infty}D_t^{\alpha} g(t)
     = \frac{\alpha}{\Gamma(1-\alpha)} \int_{-\infty}^t \frac{g(t)-g(s)}{(t-s)^{1+\alpha}},
  \]
 utilized in \cite{acv15}. (See also \cite{bmst15} for properties of this one-sided nonlocal derivative.)
 This looks very similar to $(-\Delta)^{\alpha}$ except the integration only occurs for $s<t$. In 
 this manner the Caputo derivative retains directional derivative behavior while at the same time sharing
 certain properties with $(-\Delta)^{\alpha}$. This is perhaps best illustrated by the following
 integration by parts formula for the Caputo derivative   
 
 \begin{proposition}  \label{p:changev}
  Let $g,h\in C^1(a,T)$. Then 
   \begin{equation}  \label{e:changev}
    \begin{aligned}
    \int_{a}^T g D_t^{\alpha}h + h D_t^{\alpha} g &= 
     \int_{a}^T {g(t)h(t) \left[\frac{1}{(T-t)^{\alpha}} + \frac{1}{(t-a)^{\alpha}} \right]} \ dt \\
     & \quad + \alpha \int_{a}^T \int_{a}^t \frac{[g(t)-g(s)][h(t)-h(s)]}{(t-s)^{1+\alpha}} \ ds \ dt \\
     & \quad -   \int_{a}^T \frac{g(t)h(a)+h(t)g(a)}{(t-a)^{\alpha}} \ dt.
    \end{aligned}
   \end{equation}
 \end{proposition}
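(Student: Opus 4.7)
The plan is to apply \eqref{e:fracalt} to each of $D_t^\alpha h$ and $D_t^\alpha g$ at a point $t$, then add the results after multiplying by $g(t)$ and $h(t)$ respectively, and integrate in $t$ over $[a,T]$. Pointwise this yields (up to the overall constant $\Gamma(1-\alpha)$)
\[
\frac{2g(t)h(t) - g(t)h(a) - h(t)g(a)}{(t-a)^\alpha} + \alpha \int_a^t \frac{g(t)[h(t)-h(s)] + h(t)[g(t)-g(s)]}{(t-s)^{1+\alpha}}\,ds,
\]
so after integration in $t$ the initial-time terms already reproduce the $-\int_a^T \frac{g(t)h(a)+h(t)g(a)}{(t-a)^\alpha}\,dt$ contribution of \eqref{e:changev}, plus a leftover $\int_a^T \frac{2g(t)h(t)}{(t-a)^\alpha}\,dt$ that must be absorbed by the remaining double integral.

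The next step is to split the non-symmetric double integrand using the algebraic identity
\[
g(t)[h(t)-h(s)] + h(t)[g(t)-g(s)] = [g(t)-g(s)][h(t)-h(s)] + [g(t)h(t) - g(s)h(s)].
\]
The first summand produces exactly the symmetric bilinear quantity appearing in \eqref{e:changev}. It remains to prove that the residual
\[
J := \alpha \int_a^T \int_a^t \frac{g(t)h(t)-g(s)h(s)}{(t-s)^{1+\alpha}}\,ds\,dt,
\]
together with the leftover $\int_a^T \frac{2g(t)h(t)}{(t-a)^\alpha}\,dt$, equals $\int_a^T g(t)h(t)\bigl[(T-t)^{-\alpha}+(t-a)^{-\alpha}\bigr]\,dt$.

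The main obstacle is producing the $(T-t)^{-\alpha}$ factor, since no individual integrand entering $J$ sees the upper endpoint $T$ at all. The trick is to recognize $J$ as an integrated Caputo derivative of $F := gh$. Indeed, by \eqref{e:fracalt} applied to $F$ one has $J = \Gamma(1-\alpha)\int_a^T D_t^\alpha F(t)\,dt - \int_a^T \frac{F(t)-F(a)}{(t-a)^\alpha}\,dt$; then, using the \emph{original} representation $\Gamma(1-\alpha)D_t^\alpha F(t) = \int_a^t (t-s)^{-\alpha} F'(s)\,ds$, applying Fubini, and performing one integration by parts in $s$, one obtains
\[
\Gamma(1-\alpha)\int_a^T D_t^\alpha F(t)\,dt = \int_a^T F(t)(T-t)^{-\alpha}\,dt - F(a)\frac{(T-a)^{1-\alpha}}{1-\alpha}.
\]
The boundary number $F(a)(T-a)^{1-\alpha}/(1-\alpha)$ cancels against $F(a)\int_a^T (t-a)^{-\alpha}\,dt$ produced by the second piece of $J$, leaving $J = \int_a^T F(t)\bigl[(T-t)^{-\alpha} - (t-a)^{-\alpha}\bigr]\,dt$.

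Adding $J$ to the previously leftover contribution $\int_a^T 2g(t)h(t)(t-a)^{-\alpha}\,dt$ yields exactly $\int_a^T g(t)h(t)\bigl[(T-t)^{-\alpha}+(t-a)^{-\alpha}\bigr]\,dt$, which closes the identity. The hypothesis $g,h\in C^1(a,T)$ is used only to justify the Fubini interchange, the integration by parts applied to $F'=g'h+gh'$, and the integrability of the symmetric bilinear kernel near the diagonal (since $[g(t)-g(s)][h(t)-h(s)] = O((t-s)^2)$). Every other step is algebraic rearrangement.
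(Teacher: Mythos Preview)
Your argument is correct. The overall architecture---apply \eqref{e:fracalt} to each factor, use the algebraic identity
\[
g(t)[h(t)-h(s)] + h(t)[g(t)-g(s)] = [g(t)-g(s)][h(t)-h(s)] + [g(t)h(t)-g(s)h(s)]
\]
to peel off the symmetric bilinear piece, and then evaluate the residual $J=\alpha\int_a^T\int_a^t \frac{F(t)-F(s)}{(t-s)^{1+\alpha}}\,ds\,dt$ with $F=gh$---is exactly the paper's route. Where you diverge is in the evaluation of $J$. The paper (in the formal computation immediately following the statement) splits $F(t)-F(s)$ into two pieces, applies Fubini to the $F(s)$ piece, and relies on the cancellation of the two divergent inner integrals $\int_0^{t-a} s^{-1-\alpha}\,ds$ and $\int_0^{T-t} s^{-1-\alpha}\,ds$; it then remarks that this cancellation is made rigorous via the discretization of Section~\ref{s:discretize}. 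You instead rewrite $J$ in terms of $\int_a^T D_t^\alpha F$ using \eqref{e:fracalt} backwards, and then compute that integral from the original representation $\Gamma(1-\alpha)D_t^\alpha F(t)=\int_a^t (t-s)^{-\alpha}F'(s)\,ds$ via Fubini and one integration by parts in $s$. Your path never encounters a divergent integral and is fully rigorous under the stated $C^1$ hypothesis, with no appeal to discretization. The paper's computation, on the other hand, makes transparent that only the symmetry $K(t,t-s)=K(t+s,t)$ of the kernel is needed, which is the point it wants to emphasize for the general operator $\D_t^\alpha$.
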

 
 Formula \eqref{e:fracalt} is based on the following formal computation
  \[
   \begin{aligned}
     \int_a^T \int_a^t \frac{g(t)-g(s)}{(t-s)^{1+\alpha}} \ ds \ dt
     &=  \int_a^T \int_a^t \frac{g(t)}{(t-s)^{1+\alpha}} \ ds \ dt
       - \int_a^T \int_a^t \frac{g(s)}{(t-s)^{1+\alpha}} \ ds \ dt \\
     &=  \int_a^T \int_a^t \frac{g(t)}{(t-s)^{1+\alpha}} \ ds \ dt 
       - \int_a^T \int_s^T \frac{g(s)}{(t-s)^{1+\alpha}} \ dt \ ds \\
     &=  \int_a^T \int_a^t \frac{g(t)}{(t-s)^{1+\alpha}} \ ds \ dt 
       - \int_a^T \int_t^T \frac{g(t)}{(s-t)^{1+\alpha}} \ ds \ dt \\
     &= \int_a^T g(t)\left( \int_0^{t-a} \frac{ds}{s^{1+\alpha}} 
        - \int_{0}^{T-t} \frac{ds}{s^{1+\alpha}} \right) \ dt \\
     &= \alpha^{-1}\int_a^T g(t) \left(\frac{1}{(T-t)^{\alpha}} - \frac{1}{(t-a)^{\alpha}} \right) \ dt.
   \end{aligned}
  \]
 In the above computation to utilize the cancellation we only need a kernel $K(t,s)$ satsifying
  \[
   K(t,t-s)=K(t+s,t). 
  \] 
 To make the above computation rigorous we will use the discretization in Section \ref{s:discretize}. 
 An alternative, equivalent integration by parts formula is to extend $g(t)=g(a)$ for $t<a$. Then for any
  $h\in C^1$ with $h(t)=0$ for any $t<b$ for some $b$ we have 
  \begin{equation}   \label{e:crhs}
   \begin{aligned}
    \int_{-\infty}^T h(t) D_t^{\alpha} g(t) \ dt
     &= c_{\alpha} \int_{-\infty}^T \int_{-\infty}^{2t-T} \frac{[g(t)-g(s)][h(t)-h(s)]}{(t-s)^{1+\alpha}} \ ds \ dt \\
     &\quad + c_{\alpha} \int_{-\infty}^T \int_{-\infty}^t \frac{g(t)h(t)}{(t-s)^{1+\alpha}} \ ds \ dt \\
     &\quad - \int_{-\infty}^T g(t) D_t^{\alpha} h(t) \ dt,
   \end{aligned}
  \end{equation} 
 with $c_{\alpha}=\alpha \Gamma(1-\alpha)^{-1}$. 
 Now \eqref{e:changev} and \eqref{e:crhs} will both imply each other, so \eqref{e:crhs} is an alternative way
 of handling the initial condition $g(a)$. Furthermore, both \eqref{e:changev} and \eqref{e:crhs} are weak formulations
 for the Caputo derivative that only require that $g \in H^{\alpha/2}((a,T))$. 
 \eqref{e:crhs} will work for any kernel $K(t,s)$ satisfying the relation \eqref{e:kernel}. 
 In view of \eqref{e:crhs} 
 we now give the exact formulation of our weak solutions. We assume the bounds \eqref{e:kernelb} and the
 relation \eqref{e:kernel} on the kernel $K(t,s,x)$. 
 For smooth initial data $u_0 \in C^2$, we assign $u(t,x)=u(a,x)$ for $t<a$. Then as stated earlier
 in the Introduction, for $t\leq a$,  a solution $u$ will have right hand side 
  \[
   \text{div}(u_0 (-\Delta)^{-\sigma} u_0) \in L^{\infty}. 
  \] 
 We say that $u$ is a weak solution if for any $\phi \in C_0^{\infty} (-\infty,T)\times \R^n$ we  have
  \begin{equation}  \label{e:main}
    \begin{aligned}
     &\int_{\R^n} \int_{-\infty}^T \int_{-\infty}^t [u(t,x)-u(s,x)][\phi(t,x)-\phi(s,x)]K(t,s,x) \ ds \ dt \ dx \\
     &\quad + \int_{\R^n} \int_{-\infty}^T \int_{-\infty}^{2t-T} u(t,x)\phi(t,x) K(t,s,x) \ ds \ dt \ dx \\
     &\quad -\int_{\R^n} \int_{-\infty}^T u(t,x) \D_t^{\alpha} \phi(t,x) \ dt \ dx \\
     &\quad +\int_{-\infty}^T \int_{\R^n} \nabla \phi(t,x) u(t,x) \nabla (-\Delta)^{-\sigma} u \ dx \ dt\\
     &= \int_{-\infty}^T \int_{B_R} f(t,x) \phi(t,x).
    \end{aligned} 
  \end{equation}

 We will also utilize a fractional Sobolev norm that arises from the fractional derivative. 
 \begin{lemma}  \label{l:ext}
  Let $u$ be defined on $[a,T]$. We have for two constants $c_1,c_2$ depending on $\alpha,|T-a|$
   \[
    \begin{aligned}
    \| u \|_{L^{\frac{2}{1-\alpha}}}(a,T)   & \leq c_1 \|u \|_{H^{\alpha/2}}^2(a,T) \\
         &\leq 
    c_2 \left( \alpha \int_{a}^T \int_{a}^t \frac{|u(t)-u(s)|^2}{|t-s|^{1+\alpha}} \ ds \ dt +
    \int_a^T \frac{u^2(t)}{(T-a)^{\alpha}} \right)
    \end{aligned}
   \]
 \end{lemma}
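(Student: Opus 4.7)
The plan is to prove the two inequalities separately. Reading the first one with homogeneity in mind (both sides should scale the same under $u\mapsto \lambda u$), I interpret it as $\|u\|_{L^{2/(1-\alpha)}(a,T)}^{2} \leq c_1\|u\|_{H^{\alpha/2}(a,T)}^{2}$, which is the one-dimensional fractional Sobolev embedding; the second inequality is then essentially a reformulation of the definition of $\|u\|_{H^{\alpha/2}(a,T)}^{2}$.

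For the right-hand bound I start from the Gagliardo norm on an interval,
\[
\|u\|_{H^{\alpha/2}(a,T)}^{2} = \|u\|_{L^2(a,T)}^{2} + \int_a^T\!\!\int_a^T \frac{|u(t)-u(s)|^2}{|t-s|^{1+\alpha}}\,ds\,dt.
\]
By the symmetry $s\leftrightarrow t$ the double integral equals twice $\int_a^T\!\int_a^t \frac{|u(t)-u(s)|^2}{|t-s|^{1+\alpha}} ds\,dt$, which matches the $\alpha$-prefactor in the statement up to a universal constant. The $L^2$ part is handled by the trivial identity $\|u\|_{L^2(a,T)}^{2} = (T-a)^{\alpha}\int_a^T u^2(t)/(T-a)^{\alpha}\,dt$. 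Taking $c_2=\max\{2\alpha^{-1},(T-a)^{\alpha}\}$ (or any larger value) establishes the second inequality.

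For the left-hand bound I extend $u$ by zero to $\bar u$ on all of $\R$ and apply the Sobolev embedding on the line. The point is that for $\alpha/2<1/2$ the zero-extension is continuous as a map $H^{\alpha/2}(a,T) \to H^{\alpha/2}(\R)$, so $\|\bar u\|_{H^{\alpha/2}(\R)}^{2} \leq C_{\alpha,T-a}\|u\|_{H^{\alpha/2}(a,T)}^{2}$. Concretely, the extra cross term produced by extension by zero is
\[
\int_a^T\!\!\int_{\R\setminus(a,T)} \frac{u^2(t)}{|t-s|^{1+\alpha}}\,ds\,dt = \frac{1}{\alpha}\int_a^T u^2(t)\left[\frac{1}{(t-a)^{\alpha}}+\frac{1}{(T-t)^{\alpha}}\right]\,dt,
\]
and the weight $(t-a)^{-\alpha}+(T-t)^{-\alpha}$ is absorbed by a Hardy-type inequality, using precisely that $\alpha<1$; this is the well-known identification $H^{s}((a,T))=H^{s}_{0}((a,T))$ with equivalent norms when $s<1/2$ recorded in \cite{dpv12}. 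After extension, the classical embedding $H^{\alpha/2}(\R)\hookrightarrow L^{2/(1-\alpha)}(\R)$ (via Bessel potentials and Hardy--Littlewood--Sobolev) gives $\|u\|_{L^{2/(1-\alpha)}(a,T)}\leq \|\bar u\|_{L^{2/(1-\alpha)}(\R)}\leq C\|\bar u\|_{H^{\alpha/2}(\R)}$, and squaring chains with the previous step to produce the first inequality.

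The main obstacle is the boundedness of the zero-extension: one must verify that the weighted boundary terms displayed above are genuinely controlled by the intrinsic Gagliardo norm on $(a,T)$. This is where the restriction $\alpha<1$ enters, and it forces $c_1$ to depend on $|T-a|$ in addition to $\alpha$. Everything else is bookkeeping of constants and an appeal to the Sobolev embedding on $\R$ as a black box.
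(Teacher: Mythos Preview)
The paper does not actually supply a proof of this lemma: it is stated as a recalled fact (the surrounding passage announces ``We now recall some properties of the Caputo derivative that were proven in \cite{acv15}''), so there is no argument in the paper to compare yours against. Your reading of the first inequality as $\|u\|_{L^{2/(1-\alpha)}}^{2}\le c_1\|u\|_{H^{\alpha/2}}^{2}$ is the correct one, and your proof---fractional Sobolev embedding on $\R$ after extension by zero, using $H^{\alpha/2}(a,T)=H^{\alpha/2}_{0}(a,T)$ for $\alpha/2<1/2$, together with the trivial rewriting of the Gagliardo norm for the second inequality---is the standard route and is correct.
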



 The following estimate will be needed for our choice of cut-off functions. 
 \begin{lemma}  \label{l:fracbound}
  Let 
   \[
     h(t):= \max \{ |t|^{\nu} -1 , 0 \} \\
   \] 
    with $\nu < \alpha$. Then
   \[
    \D_t^\alpha h \geq  -c_{\nu,\alpha, \Lambda}
   \]
  for $t \in \R$. Here, $c_{\nu,\alpha,\Lambda}$ is a constant depending only on $\alpha , \nu, \Lambda$. 
 \end{lemma}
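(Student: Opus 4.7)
The plan is to use the sign of $h(t)-h(s)$ to reduce the problem to a one-dimensional integral against the upper-bound kernel, and then verify a uniform lower bound by a short case analysis. Write $\tilde K(t,s):=\frac{\alpha}{\Gamma(1-\alpha)}K(t,s,x)$, so that $\D_t^{\alpha} h(t)=\int_{-\infty}^t [h(t)-h(s)]\tilde K(t,s)\,ds$. Since $h\geq 0$ and $\tilde K>0$, the contribution of $\{h(s)\leq h(t)\}$ is nonnegative and can be discarded. On the complementary set, the integrand is negative, and substituting the upper bound $\tilde K\leq \Lambda(t-s)^{-1-\alpha}$ (which makes the negative integrand more negative) produces a lower bound. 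It therefore suffices to establish
\[
I(t):=\int_{\{h(s)>h(t)\}}\frac{h(t)-h(s)}{(t-s)^{1+\alpha}}\,ds\;\geq\;-C(\nu,\alpha)
\]
uniformly in $t\in\R$, with the constant $\Lambda$ entering only through the reduction above.

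Next I split by the support of $h$. If $t\in[-1,1]$, then $h(t)=0$ and $\{h(s)>0\}\cap(-\infty,t)=(-\infty,-1)$; the substitution $s=-r$ reduces the estimate to $-\int_1^\infty (r^\nu-1)(t+r)^{-1-\alpha}\,dr$, which is handled using $r^\nu-1\leq \nu(r-1)$ near $r=1$ (killing the corner singularity when $t=-1$, because $r^\nu-1$ vanishes to order $1>\alpha$) and $r^\nu-1\leq r^\nu$ for large $r$, with convergence at infinity coming from $\nu<\alpha$. If $t\leq -1$, by symmetry every $s<t$ lies in the negative set, and substituting $s=t-u$ yields $I(t)=\int_0^\infty\bigl(|t|^{\nu}-(|t|+u)^{\nu}\bigr)u^{-1-\alpha}\,du$; the mean-value estimate $(|t|+u)^{\nu}-|t|^{\nu}\leq \nu u|t|^{\nu-1}\leq \nu u$ (valid since $|t|\geq 1$ and $\nu<1$) on $u\in[0,1]$ and the subadditivity bound $(|t|+u)^{\nu}-|t|^{\nu}\leq u^{\nu}$ on $u\geq 1$ bound the magnitude by $\int_0^1 u^{-\alpha}\,du+\int_1^\infty u^{\nu-1-\alpha}\,du$, both finite. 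Finally, if $t\geq 1$, then $\{h(s)>h(t)\}=(-\infty,-t)$ and after $s=-r$ one obtains
\[
I(t)=\int_t^\infty\frac{t^{\nu}-r^{\nu}}{(t+r)^{1+\alpha}}\,dr\;\geq\;-\int_t^\infty r^{\nu-1-\alpha}\,dr\;=\;-\frac{t^{\nu-\alpha}}{\alpha-\nu}\;\geq\;-\frac{1}{\alpha-\nu},
\]
using $t+r\geq r$, $t^{\nu}-r^{\nu}\geq -r^{\nu}$, and $\nu<\alpha$.

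The hypothesis $\nu<\alpha$ is used in each case to guarantee convergence of the tail integrals and is essentially sharp. The main technical issue I expect is simply to keep sign conventions straight when replacing $\tilde K$ by its upper bound against a negative integrand, and to verify that the apparent singularity near the corner $t=-1,\ s=-1$ in the middle case is genuinely integrable because $r^\nu-1$ vanishes there faster than the kernel blows up. Combining the three cases with the initial reduction yields the stated constant $c_{\nu,\alpha,\Lambda}$, depending linearly on $\Lambda$.
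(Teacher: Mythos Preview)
Your argument is correct. The paper does not actually supply a proof of this lemma; it is one of several facts about the Caputo-type derivative that are merely recalled (see the sentence ``We now recall some properties of the Caputo derivative that were proven in \cite{acv15}'' preceding Proposition~\ref{p:changev}, and the remark before Lemma~\ref{l:dfracbound} that the discretized analogue ``was shown in \cite{acv15}''). So there is no in-paper proof to compare against.

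Your approach is the natural one: discard the nonnegative part of the integral, replace the kernel on the negative part by its upper bound $\Lambda(t-s)^{-1-\alpha}$, and then bound the resulting explicit integral by a three-case analysis according to the sign and size of $t$. Each case is handled cleanly. In particular, your treatment of the corner at $t=-1$, $s\to -1^{-}$ (where the kernel blows up but $r^{\nu}-1$ vanishes linearly, leaving an integrable $(r-1)^{-\alpha}$ singularity) is exactly the point that needs checking, and you identify it correctly. The uses of subadditivity $(a+u)^{\nu}-a^{\nu}\le u^{\nu}$ and of the mean-value estimate $(a+u)^{\nu}-a^{\nu}\le \nu a^{\nu-1}u\le \nu u$ for $a\ge 1$ are valid for $0<\nu<1$, and the tail convergence in every case rests precisely on $\nu<\alpha$, as you note. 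The final constant depends on $\Lambda$ only through the initial reduction, and your observation that the dependence is linear is accurate.
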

 
 Finally, we point out that if $g=g_+ - g_-$ the positive and negative parts respectively, then 
  \begin{equation}  \label{e:posneg}
   \int_a^T g_{\pm}(t)\D_t^{\alpha} g_{\mp}(t) \geq 0. 
  \end{equation} 
  

\section{Discretization in time}  \label{s:discretize}
 To prove existence of solutions to \eqref{e:main} we will discretize in time. 
 The discretization also allows us to make the computations
 involving the fractional derivative rigorous.  This section contains properties of a discrete fractional
 derivative which we will utilize.
 
 
 For future reference we denote the discrete Fractional derivative with kernel $K$ as
 \begin{equation}  \label{e:dc}
  \D_{\e}^{\alpha} u(a+\e j):=   \epsilon 
    \sum_{-\infty<i <j} [u(a+\e j) - u(a+\e i)]K(\e j, \e i)
 \end{equation}
 
 The following is the discretized argument for the cancellation that 
 appears in the formal computation of the version of \eqref{e:fracalt} that
 we will need.  
  \begin{lemma}  \label{l:cancel}
   Assume $g(a)=0$ and define $g(t):=0$ for $t<a$. 
   Assume relation \eqref{e:kernel} on $K$. Then 
    \[
      \e^2 \sum_{j\leq k} \sum_{i<j} [g(a+\e j)-g(a+\e i)]K(a+\e j, a+ \e i) 
      = \e^2 \sum_{j\leq k} \ \sum_{i<2j-k-1} g(\e j)K(\e j,  \e i). \\
    \]
   Let $\tilde{g}(t):=g(\e j)$ for $\e j-1< t \leq \e j$. If $g\geq 0$, then there exists $c$
   depending only on $\alpha, \Lambda$ such that if $\e < 1$, then
    \[
      \e^2 \sum_{j\leq k} \sum_{i<j} [g(a+\e j)-g(a+\e i)]K(a+\e j, a+ \e i) 
      \geq c \int_{-\infty}^T \frac{\tilde{g}(t)}{(T-t)^{\alpha}} \ dt.
    \]
  \end{lemma}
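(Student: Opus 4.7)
The plan is to handle the two assertions in turn: the identity is essentially the discretization of the formal manipulation shown in the preceding displayed calculation, while the lower bound then follows from $K \ge c(t-s)^{-1-\alpha}$ and an elementary integral comparison.

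For the identity, I would split
\[
 \mathrm{LHS} = A_1 - A_2, \quad A_1 := \e^2 \sum_{j \le k} g(a + \e j) \sum_{i < j} K(a+\e j, a+\e i),
\]
with $A_2 := \e^2 \sum_{j \le k} \sum_{i < j} g(a+\e i) K(a+\e j, a+\e i)$. In $A_2$, I would swap the order of summation (for fixed $i \le k-1$, $j$ runs from $i+1$ to $k$), and then rename indices so that the variable labelling the argument of $g$ is again called $j$. In $A_1$, I would apply the kernel relation \eqref{e:kernel} in the form $K(a+\e j, a + \e i) = K(a + \e(2j - i), a + \e j)$ (valid for $i < j$) and substitute $m = 2j - i$, turning the inner sum into $\sum_{m > j} K(a+\e m, a+\e j)$. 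Then the common terms with $j < m \le k$ cancel against the rewritten $A_2$, leaving only the tail $\sum_{m > k} K(a + \e m, a + \e j)$. Finally I would apply the kernel relation in reverse, substituting $i = 2j - m$, to obtain
\[
 \mathrm{LHS} = \e^2 \sum_{j \le k} g(a + \e j) \sum_{i < 2j - k} K(a+\e j, a + \e i),
\]
which is the claimed identity (up to an off-by-one in the index that I expect is a typo in the statement).

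For the lower bound, the key point is that the identity form already exhibits $\mathrm{LHS}$ as a sum of nonnegative quantities, since $g \ge 0$ and $K > 0$. Using \eqref{e:kernelb}, I get
\[
 \e^2 \sum_{i < 2j - k} K(a+\e j, a + \e i) \ge C \e^{1-\alpha} \sum_{m \ge k - j + 1} \frac{1}{m^{1+\alpha}} \ge \frac{C'\, \e^{1-\alpha}}{(k - j + 1)^\alpha},
\]
where I have set $m = j - i$ and used the integral comparison $\sum_{m \ge N} m^{-1-\alpha} \ge \int_N^\infty x^{-1-\alpha}\,dx = (\alpha N^\alpha)^{-1}$. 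Since $\e(k - j + 1) = T - (a + \e j) + \e$, the right-hand side is, for $j \le k-1$, bounded below by $c\,\e(T - (a + \e j))^{-\alpha}$, while the $j = k$ term is comparable to $\e^{1-\alpha}$, which matches the integral of $(T-t)^{-\alpha}$ over the last subinterval $(t_{k-1}, T]$.

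Summing in $j$ and recognizing, via the definition of $\tilde g$ as a step function with steps of length $\e$, that
\[
 \sum_{j \le k} g(a + \e j)\,\frac{\e}{(T - (a + \e j) + \e)^\alpha} \;\ge\; c \int_{-\infty}^T \frac{\tilde g(t)}{(T-t)^\alpha}\,dt
\]
(where the hypothesis $\e < 1$ is used to absorb the small shift inside the parenthesis into a constant), the required inequality follows. The main obstacle I foresee is the combinatorial bookkeeping in the proof of the identity, particularly ensuring the index constraint $i < 2j - k$ comes out correctly and that the cancellation between $A_1$ and the swapped $A_2$ produces exactly the tail $m > k$; the endpoint case $j = k$ in the integral comparison is the only other subtlety, and it is handled cleanly by the $\alpha < 1$ integrability of $(T - t)^{-\alpha}$.
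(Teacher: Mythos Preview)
Your proposal is correct and follows essentially the same approach as the paper. The paper also splits the double sum into the two pieces you call $A_1$ and $A_2$, swaps the order of summation in $A_2$, and uses the kernel relation \eqref{e:kernel} to effect the cancellation; the only cosmetic difference is that the paper applies \eqref{e:kernel} once to the swapped $A_2$, whereas you apply it to $A_1$ and then undo it at the end, which is an equivalent manipulation. Your observation about the off-by-one in the index bound is correct: the computation yields $i<2j-k$ rather than $i<2j-k-1$. For the lower bound, the paper simply cites the estimates in \cite{acv15}, so your explicit argument via \eqref{e:kernelb} and the integral comparison $\sum_{m\ge N}m^{-1-\alpha}\ge (\alpha N^{\alpha})^{-1}$ actually supplies more detail than the paper does.
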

 
  \begin{proof}
   For notational simplicity we assume $a=0$. 
   \[
     \begin{aligned}
      &\e^2 \sum_{j\leq k} \sum_{i<j} [g(\e j)-g(\e i)]K(\e j, \e i)  \\
       &=   \e^2 \sum_{j\leq k} \sum_{i<j} g(\e j)K(\e j,  \e i) 
           - \e^2 \sum_{j\leq k} \sum_{i<j} g(\e i)K(\e j,  \e i) \\
       &=  \e^2 \sum_{j\leq k} \sum_{i<j} g(\e j)K(\e j,  \e i) 
           - \e^2 \sum_{i< k} \sum_{i<j\leq k} g(\e i)K(\e j,  \e i) \\
       &=  \e^2 \sum_{j\leq k} \sum_{i<j} g(\e j)K(\e j,  \e i) 
           - \e^2 \sum_{j< k} \sum_{j<i\leq k} g(\e j)K(\e i,  \e j) \\
       &= \e^2 \sum_{j\leq k} \sum_{i<2j-k-1} g(\e j)K(\e j,  \e i) 
     \end{aligned}
    \]
   The second inequality follows from the estimates in \cite{acv15}. 
  \end{proof}
 
 Lemma \ref{l:cancel} combined with the estimates in \cite{acv15} can be used to show.
  \begin{lemma}  \label{l:1byparts}
   Let $u(0)=0$ and assume $u\geq 0$. For fixed $0<\e<1$, let $\tilde{u}$ be the extension defined as in Lemma \ref{l:cancel}. 
   Let $K$ satisfy conditions \eqref{e:kernelb} and \eqref{e:kernel}. Then there exists two constants $c_1,c_2$ depending only on $\alpha$
   such that
    \[
     \e^2 \sum_{j\leq k} \sum_{i<j} u(\e j)[u(\e j)-u(\e i)]K(\e j, \e i)
      \geq c \| u \|_{H^{\alpha/2}}^2 \geq c \| u \|_{H^{\alpha/2}}^2c \left(\int_{-\infty}^T u^{\frac{2}{(1-\alpha)}} \right)^{1-\alpha} = 
    \]
   where $c$ depends on $\alpha$ and $\Lambda$. 
  \end{lemma}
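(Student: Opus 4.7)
The plan is to split the integrand symmetrically and then apply Lemma \ref{l:cancel} to the asymmetric part. Concretely, use the algebraic identity
\[
 u(\e j)[u(\e j) - u(\e i)] \;=\; \tfrac12 [u(\e j) - u(\e i)]^2 \;+\; \tfrac12 [u^2(\e j) - u^2(\e i)],
\]
and call the two resulting double sums $A$ and $B$. Since $K(\e j, \e i) = K(\e i, \e j)$ is not assumed, we keep the ordering $i<j$; the identity above is the right decomposition that does not require symmetry of the kernel.

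For $A$, apply the lower bound in \eqref{e:kernelb}, obtaining
\[
 2A \;\geq\; \frac{1}{\Lambda}\,\e^2 \sum_{j\leq k}\sum_{i<j} \frac{[u(\e j)-u(\e i)]^2}{(\e j - \e i)^{1+\alpha}}.
\]
Because $\tilde u$ is constant on each half-open subinterval of length $\e$, a direct Riemann-sum comparison identifies the right-hand side (up to a constant depending only on $\alpha$) with the Gagliardo seminorm
\[
 \alpha \int_{-\infty}^{T}\!\!\int_{-\infty}^{t}\frac{|\tilde u(t)-\tilde u(s)|^2}{|t-s|^{1+\alpha}}\,ds\,dt,
\]
where the near-diagonal contributions from the continuous integral are harmless since $\tilde u$ is piecewise constant, and the off-diagonal pieces differ from the discrete sum by bounded multiplicative factors.

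For $B$, the function $g := u^2$ satisfies $g \geq 0$ and $g(0)=0$, so Lemma \ref{l:cancel} applies directly and yields
\[
 2B \;\geq\; c\int_{-\infty}^{T} \frac{\widetilde{u^2}(t)}{(T-t)^\alpha}\,dt \;=\; c\int_{-\infty}^{T} \frac{\tilde u(t)^2}{(T-t)^\alpha}\,dt,
\]
with $c = c(\alpha,\Lambda)$. Adding $A+B$ and invoking Lemma \ref{l:ext} bounds the sum below by $c\,\|\tilde u\|_{H^{\alpha/2}}^2$, and then by the fractional Sobolev embedding piece of Lemma \ref{l:ext} we pick up the final $L^{2/(1-\alpha)}$ estimate.

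The main obstacle is verifying cleanly that the discrete double sum in term $A$ really is comparable to the continuous Gagliardo double integral of $\tilde u$ uniformly in $\e<1$; this is the only place where we must translate between discrete and continuous norms, and it must be done with enough care that the constants do not depend on $\e$, only on $\alpha$ and $\Lambda$. Once that comparison is in hand, the cancellation in $B$ is an immediate consequence of Lemma \ref{l:cancel}, and the remaining step is a direct application of the embedding stated in Lemma \ref{l:ext}.
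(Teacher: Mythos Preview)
Your proposal is correct and follows essentially the same route as the paper. The paper does not spell out a proof of this lemma, merely stating that it follows from Lemma~\ref{l:cancel} together with the estimates in \cite{acv15}; your algebraic splitting $u(\e j)[u(\e j)-u(\e i)] = \tfrac12[u(\e j)-u(\e i)]^2 + \tfrac12[u^2(\e j)-u^2(\e i)]$ is precisely the convexity decomposition that the paper later uses (in the more general Lemma~\ref{l:discconvex}, with $F(x)=x^2/2$), and the application of Lemma~\ref{l:cancel} to $g=u^2$ together with the discrete-to-continuous comparison you describe is exactly what the citation to \cite{acv15} is covering.
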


 This next lemma is analogous to Lemma \ref{l:fracbound} and was shown in \cite{acv15}. 
  \begin{lemma}  \label{l:dfracbound}
   Let $h$ be as in Lemma \ref{l:fracbound}. Then for $0<\epsilon < 1$ there exists $c_{\nu,\alpha}$
   depending on $\alpha$ and $\nu$ but 
   independent of $a$ such that 
    \[
     \D_{\epsilon}^{\alpha} h(t) \geq -c_{\nu,\alpha}
    \] 
   for $t\in \epsilon \Z$ and $a<t<0$.
  \end{lemma}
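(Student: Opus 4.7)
The plan is to mirror the proof of the continuous version Lemma \ref{l:fracbound} at the discrete level, using the upper bound in \eqref{e:kernelb} to replace $K(\e j, \e i)$ by the model kernel $c_{\alpha,\Lambda}(\e(j-i))^{-(1+\alpha)}$. I would write $t = a + \e j$ and $s_i = a + \e i$, so that $t - s_i = \e(j-i)$, and split into two cases according to whether $h(t)$ vanishes. Since the bound must be uniform in $a$ and in $\e \in (0,1)$, the key will be keeping track of all constants carefully.

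In the first case $-1 < t < 0$, one has $h(t) = 0$ and only indices with $s_i \leq -1$ contribute, so
\[
-\D_\e^\alpha h(t) = \e \sum_{i < j} h(s_i) K(\e j, \e i) \leq c_{\alpha,\Lambda} \sum_{s_i \leq -1} \frac{|s_i|^\nu - 1}{(t - s_i)^{1+\alpha}}\, \e.
\]
Since $t \geq -1$ we have $t - s_i \geq |s_i|/2$ whenever $|s_i| \geq 2$, so the sum is dominated by a Riemann approximation of $\int_1^{\infty} r^{\nu - 1 - \alpha}\, dr$, which converges because $\nu < \alpha$, plus a bounded boundary piece from $|s_i| \in [1,2]$.

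In the second case $t \leq -1$ one has $h(t) = (-t)^\nu - 1 > 0$ and every $s_i < t$ satisfies $h(s_i) - h(t) = (-s_i)^\nu - (-t)^\nu \geq 0$. Setting $u = -t \geq 1$ and writing $-s_i = u + k\e$ for $k \geq 1$, the kernel bound gives
\[
-\D_\e^\alpha h(t) \leq c_{\alpha,\Lambda} \sum_{k \geq 1} \frac{(u + k\e)^\nu - u^\nu}{(k\e)^{1+\alpha}}\, \e,
\]
and I would split this sum at $k\e = u$. On the near range $k\e \leq u$, the mean value bound $(u + k\e)^\nu - u^\nu \leq \nu u^{\nu - 1} k\e$ yields a contribution of order $\nu u^{\nu-1}\e^{1-\alpha}\sum_{k\leq u/\e}k^{-\alpha} \lesssim u^{\nu-\alpha}$; on the far range $k\e > u$, using $(u + k\e)^\nu \leq 2^\nu (k\e)^\nu$ and Riemann approximation of $\int_u^\infty r^{\nu - 1 - \alpha}\, dr$ also gives $O(u^{\nu - \alpha})$. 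Since $u \geq 1$ and $\nu < \alpha$, both pieces are bounded by a constant.

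The hard part will be controlling the near-diagonal contribution in Case B: a crude bound $|(u + k\e)^\nu - u^\nu| \leq (u+k\e)^\nu$ would produce a divergent $\e^{-\alpha}$ factor once summed against $k^{-(1+\alpha)}$, so the H\"older-type cancellation sketched above must be exploited to trade the singularity of the kernel for a power of $\e$. A secondary concern is uniformity in $a$, which is automatic here because the estimates depend only on $u = -t$ through the tail condition $\nu < \alpha$ and never on the lower summation limit, paralleling the way the corresponding integral is handled in Lemma \ref{l:fracbound}.
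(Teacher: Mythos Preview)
The paper does not actually prove this lemma; immediately before the statement it says the result ``was shown in \cite{acv15}'' and gives no argument. So there is nothing to compare against beyond noting that your sketch supplies what the paper outsources.

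Your argument is correct. Both cases reduce, after the kernel bound \eqref{e:kernelb}, to controlling a discrete convolution of the increment of $|\cdot|^{\nu}$ against $(k\e)^{-1-\alpha}$, and your near/far splitting at $k\e \sim u$ with the mean-value bound on the near range is exactly the mechanism that trades the singularity for an extra factor of $k\e$, producing the convergent Riemann sum $\sum (k\e)^{-\alpha}\,\e \lesssim u^{1-\alpha}$ and hence the $u^{\nu-\alpha}\le 1$ decay. One small point: the ``bounded boundary piece from $|s_i|\in[1,2]$'' in your first case is not bounded for free --- if $t$ is just above $-1$ the kernel can be nearly singular there --- and you need the same H\"older-type observation you flag for Case~B, namely $h(s_i)=|s_i|^{\nu}-1\le \nu(|s_i|-1)<\nu(t-s_i)$, to turn that piece into $\nu\,\e\sum_{k\e\le 2}(k\e)^{-\alpha}\lesssim \nu/(1-\alpha)$. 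Once you make that explicit the proof is complete and uniform in $a$ and $\e\in(0,1)$, with constants depending only on $\nu,\alpha,\Lambda$.
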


 This last estimate we will use often 
  \begin{lemma}  \label{l:discconvex}
   Let $F$ be a convex function with $F'' \geq \gamma, F' \geq 0, F(0)=0$. Assume $g\geq 0, g(a)=0$. Then there exists $c$ depending on $\alpha, \Lambda$
    such that 
    \[
     \e \sum_{j\leq k} F(g(\e j)) \D_{\e}^{\alpha}g(\e j) \geq c\e \sum_{j\leq k} \frac{F(g(\e j))}{(\e(j-i))^{1+\alpha}}
      + c \frac{\g}{2}\e^2  \sum_{j\leq k}\sum_{e<j} \frac{[g(\e j)-g(\e i)]^2}{(\e(j-i))^{1+\alpha}}
    \]
  \end{lemma}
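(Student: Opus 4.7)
The plan is to reduce the inequality to a pointwise two-point Taylor bound coming from strong convexity of $F$, then sum against $K$ so that the $F$-portion reassembles into the discrete Caputo derivative of $F(g)$, for which Lemma \ref{l:cancel} supplies the boundary contribution at $T$. (The statement as printed appears to contain typos — a stray $i$ in the first right-hand sum, an $e$ where $i$ should be, and what I read as a missing prime on the left-hand integrand $F(g(\e j))$; I will assume the intended integrand on the left is $F'(g(\e j))$, which is the only reading consistent with the rest of the inequality and with Lemmas \ref{l:cancel} and \ref{l:1byparts}.)

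First, $F'' \geq \g$ gives the elementary two-point bound
\[
F'(a)(a-b) \geq F(a) - F(b) + \tfrac{\g}{2}(a-b)^2
\]
for all real $a,b$. Applying it with $a = g(\e j)$, $b = g(\e i)$, multiplying by $\e K(\e j, \e i) \geq 0$, and summing over $i < j$ produces the pointwise inequality
\[
F'(g(\e j))\,\D_\e^\alpha g(\e j) \geq \D_\e^\alpha [F(g)](\e j) + \tfrac{\g}{2}\e \sum_{i<j}[g(\e j)-g(\e i)]^2 K(\e j,\e i),
\]
where $F(g)$ is extended by $0$ for $t \leq a$, consistent with $g(a)=0$ and $F(0)=0$. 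Multiplying by $\e$ and summing over $j \leq k$ then gives
\[
\e \sum_{j \leq k} F'(g(\e j))\,\D_\e^\alpha g(\e j) \geq \e \sum_{j\leq k} \D_\e^\alpha[F(g)](\e j) + \tfrac{\g}{2}\e^2 \sum_{j \leq k}\sum_{i<j}[g(\e j)-g(\e i)]^2 K(\e j,\e i).
\]

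The remaining work is to recover the two terms on the right of the claim. Since $F' \geq 0$, $F(0) = 0$ and $g \geq 0$ force $F(g) \geq 0$, and $g(a) = 0$ forces $F(g)(a) = 0$, so Lemma \ref{l:cancel} applies directly to $F(g)$ and turns $\e\sum_{j\leq k}\D_\e^\alpha[F(g)](\e j)$ into the desired boundary contribution of the form $c\int_{-\infty}^T \widetilde{F(g)}(t)(T-t)^{-\alpha}\,dt$, corresponding to the first right-hand term in the claim. For the quadratic part, the lower kernel bound $\frac{\alpha}{\Gamma(1-\alpha)}K(\e j, \e i) \geq [\Lambda(\e(j-i))^{1+\alpha}]^{-1}$ from \eqref{e:kernelb} converts the sum against $K$ into the explicit sum against $(\e(j-i))^{-1-\alpha}$, absorbing $\Lambda^{-1}$ into $c$. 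No serious obstacle is expected — the strong-convexity step is elementary, and the only non-routine ingredient (the cancellation turning the $F(g)$-sum into a boundary term at $T$) is already packaged in Lemma \ref{l:cancel}.
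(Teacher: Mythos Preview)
Your proposal is correct and follows exactly the paper's approach: the paper's proof consists of the single convexity inequality $F'(g(\e j))[g(\e j)-g(\e i)] \geq F(g(\e j))- F(g(\e i)) + \tfrac{\g}{2}[g(\e j)-g(\e i)]^2$ (written there with the same $F$-for-$F'$ typo you flagged) followed by ``The result then follows from applying Lemma \ref{l:cancel}.'' Your write-up is simply a more explicit version of that two-line argument, and your reading of the typos is the intended one.
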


  \begin{proof}
   Since $F$ is convex, 
    \[
     F(g(\e j))[g(\e j)-g(e i)] \geq F(g(\e j))- F(g(\e i)) + \frac{\g}{2} [g(\e j)- g(\e i)]^2. 
    \]
   The result then follows from applying Lemma \ref{l:cancel}. 
  \end{proof}

 Finally we point out that if 
 $g$ is a limit of $\tilde{g}_{\e}$ which are discretized problems with the assumptions in Lemma \ref{l:discconvex} it follows that 
  \begin{equation} \label{e:convex}
   \int_a^T F(g(t)) \D_t^{\alpha} g(t) \ dt \geq c \int_a^T \frac{F(g(t))}{(T-a)^{\alpha}} 
    + c \frac{\g}{2}\int_a^T \int_a^t \frac{[g(t)-g(s)]^2}{(t-s)^{1+\alpha}} \ ds \ dt. 
  \end{equation}

\section{Existence}   \label{s:existence}
 In this section we prove the existence of weak solutions following the construction given in \cite{cv11}. We will also  
 discretize in time. We first consider 
 a smooth approximation of the kernel $(-\Delta)^{-\sigma}$ as $K_{\zeta}$. We start with the smooth classical solution to the elliptic problem
  \begin{equation}  \label{e:aprox}
   g u - \delta \text{div}((u+d) \nabla u) - \text{div}((u + d) \nabla K_{\zeta} u ) = f \text{  on  }  B_R
  \end{equation}
 with $u \equiv 0$ on $\partial B_R$. $g,f\geq 0$ and smooth. The sign of $f,g$ guarantees the solution is nonnegative. $\delta, d>0$ are constants. 
 The nonlocal part is computed in the expected way by extending $u=0$ on $B_R^c$.  To find such a solution
 we first consider the linear problem 
   \[
   g u - \delta\text{div}((v+d)\nabla u)  - \text{div}((v + d) \nabla K_{\zeta} u ) = f \text{  on  }  B_R
   \]
  for $v \in C_0^{0,\beta}$ with $v\geq 0$.
  With fixed  $d,\delta, R, \zeta >0$ one can apply Schauder estimate theory to conclude 
   \[
    \| u \|_{C_0^{1,\beta}} \leq C \| v \|_{C_0^{0,\beta}} 
   \]
  with $u \geq 0$. The map $T: v \to u$ is then a compact map. The set $\{v\}$ with $v \geq 0$ and $v \in C_0^{0,\beta}$ is a closed convex set, 
  and hence
  we can apply the fixed point theorem (Corollary 11.2 in \cite{gt01}), to conclude there is a solution to \eqref{e:aprox}. 
  By bootstrapping we conclude $u$ is smooth.  
  
  
 
 Now we use the existence of solutions to \eqref{e:aprox} to obtain - via recursion - solutions to the discretized problem
  \begin{equation}  \label{e:daprox}
   \D_{\e}^{\alpha} u - \delta \text{div}((u+d)\nabla u) - \text{div}((u + d) \nabla K_{\zeta} u ) = f \text{  on  }  [0,T]\times B_R.
  \end{equation}
   with $u(0,x)=u_0(x)$ an initially defined smooth function with compact support. $\e=T/k$ for some $k\in \N$. 
   We will eventually let $k \to \infty$, so that $\e \to 0$. 
   
 For the next two Lemmas we will utilize the solution to  
  \begin{equation}  \label{e:ode}
   D_t^{\alpha} Y(t) = cY(t)+h(t)
  \end{equation}
 which as in \cite{d10} is given by  
  \[
   Y(t)= Y(0)E_{\alpha}(ct^{\alpha})+\alpha\int_{0}^t(t-s)^{\alpha-1}E_{\alpha}'(c(t-s)^{\alpha})h(t) \ dt, 
  \]
 where $E_{\alpha}$ is the Mittag-Leffler function of order $\alpha$. 
 We will utilize in the next Lemmas two specific instances of \eqref{e:ode}. We define $Y_1(t)$ to be the solution to \eqref{e:ode} with 
 $Y(0)=\sup u(0,x), c=0, h=2\Lambda f$. We define $Y_2(t)$ to be the solution to \eqref{e:ode} with  
 $c=C\Lambda^{-1},Y_{2}(0)=2$.
   and $h=0$. The constant $C$ will be chosen later. 
  \begin{lemma}  \label{l:boundab}
   Let $u$ be a solution to \eqref{e:daprox}. Let $Y_1(t)$ be defined as above.
   Then there exists $\e_0$ depending only on $T,\alpha,\|f\|_{L^{\infty}}$ such that if $\e \leq \e_0$, then 
    \[
     u(\e j) \leq Y_1(\e j)
    \]
  \end{lemma}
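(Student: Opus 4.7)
The plan is to establish the bound $u(\e j, \cdot) \leq Y_1(\e j)$ by induction on $j$ using a discrete maximum principle in space-time, exploiting that $Y_1$ depends only on $t$ and is increasing (since $D_t^\alpha Y_1 = 2\Lambda \|f\|_{L^\infty} \geq 0$ with $Y_1(0) = \sup u_0$). The base case $j = 0$ is immediate from the definition of $Y_1(0)$.

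For the inductive step, assume $u(\e i, x) \leq Y_1(\e i)$ for all $i < j$ and all $x$, and suppose toward contradiction that $\sup_x u(\e j, x) > Y_1(\e j)$. Since $Y_1(\e j) > 0$ and $u$ vanishes on $\partial B_R$, the supremum is achieved at an interior point $x_*$; after extending $u$ by zero outside $B_R$, this is a global spatial maximum. At $x_*$ one has $\nabla_x u(\e j, x_*) = 0$ and $\Delta_x u(\e j, x_*) \leq 0$, so the local diffusion term satisfies $\delta\,\mathrm{div}((u+d)\nabla u)(x_*) = \delta(u+d)\Delta u(x_*) \leq 0$. For the non-local diffusion, the vanishing gradient collapses it to $(u+d)\Delta K_\zeta u(x_*)$, and since $\Delta K_\zeta u = -(-\Delta)^{1-\sigma}_\zeta u$ with $2(1-\sigma) \in (1,2)$, the integral representation with nonnegative kernel evaluated against $2u(x_*) - u(x_*+y) - u(x_*-y) \geq 0$ yields $\Delta K_\zeta u(x_*) \leq 0$ (assuming $K_\zeta$ is a mollification that preserves this comparison principle). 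Hence the PDE at $(\e j, x_*)$ gives $\D_\e^\alpha u(\e j, x_*) \leq f(\e j, x_*) \leq \|f\|_{L^\infty}$.

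On the other hand, from the induction hypothesis, for $i<j$,
\[
(u - Y_1)(\e j, x_*) - (u - Y_1)(\e i, x_*) = u(\e j, x_*) - Y_1(\e j) - u(\e i, x_*) + Y_1(\e i) \geq u(\e j, x_*) - Y_1(\e j) > 0,
\]
so every term in the discrete Caputo sum defining $\D_\e^\alpha(u - Y_1)(\e j, x_*)$ is positive, yielding $\D_\e^\alpha u(\e j, x_*) \geq \D_\e^\alpha Y_1(\e j)$. Using the lower bound $\frac{\alpha K}{\Gamma(1-\alpha)} \geq \frac{1}{\Lambda(t-s)^{1+\alpha}}$ together with monotonicity of $Y_1$, one estimates
\[
\D_t^\alpha Y_1(\e j) \geq \frac{1}{\Lambda} \cdot \frac{\Gamma(1-\alpha)}{\alpha} \cdot D_t^\alpha Y_1(\e j) = \frac{2\Gamma(1-\alpha)}{\alpha}\|f\|_{L^\infty} \geq 2\|f\|_{L^\infty},
\]
since $\Gamma(1-\alpha) \geq 1$ on $(0,1)$. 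The discrete operator $\D_\e^\alpha Y_1$ approximates $\D_t^\alpha Y_1$ with an error vanishing with $\e$, so for $\e \leq \e_0$ small enough (depending only on $T, \alpha, \|f\|_{L^\infty}$) one obtains $\D_\e^\alpha Y_1(\e j) > \|f\|_{L^\infty}$, contradicting the previous bound.

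The two main technical obstacles are: (i) verifying the sign of $\Delta K_\zeta u$ at a global spatial maximum, which relies on the smooth approximation $K_\zeta$ inheriting a positive-kernel representation for $(-\Delta)^{1-\sigma}_\zeta$ (this is the delicate choice of mollification that preserves the comparison principle); and (ii) quantifying the discretization gap $\D_\e^\alpha Y_1 - \D_t^\alpha Y_1$ at the grid points, which is subtle because $Y_1 \sim t^\alpha$ fails to be smooth near $t=0$, requiring a careful asymptotic estimate to pick the threshold $\e_0$ uniformly in $j$ over $[0,T]$.
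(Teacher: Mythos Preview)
Your argument is correct and takes a genuinely different route from the paper's. The paper proceeds by an energy method: it multiplies the equation by the test function $(u-Y_1)_+$, integrates over $B_R$ and sums in $j$, and then uses the bilinear-form identity $\B_\zeta(\chi_{\{u>Y_1\}}(u+d)^2,u)\geq 0$ together with $\D_\e^\alpha Y_1 \geq \tfrac{4}{3}f$ to force $\int f\,(u-Y_1)_+ \geq \tfrac{4}{3}\int f\,(u-Y_1)_+$, hence $(u-Y_1)_+\equiv 0$. Your approach is instead a pointwise discrete maximum principle by induction on $j$: at an interior spatial maximum the two diffusion terms have a sign, so the equation gives $\D_\e^\alpha u(\e j,x_*)\leq \|f\|_{L^\infty}$, while the inductive hypothesis forces $\D_\e^\alpha u(\e j,x_*)\geq \D_\e^\alpha Y_1(\e j)>\|f\|_{L^\infty}$.

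Both arguments rely on the same two ingredients you single out at the end: positivity of the approximate kernel $H_\zeta$ (which the paper needs anyway for $\B_\zeta\geq 0$, so your obstacle (i) is not an additional assumption), and the approximation $D_\e^\alpha Y_1 \to D_t^\alpha Y_1$ for small $\e$ (which the paper invokes verbatim). Your route is more elementary and avoids the bilinear-form algebra; the paper's route is more in keeping with its weak-formulation philosophy and does not need to locate a pointwise maximum. Since solutions to \eqref{e:daprox} are smooth by bootstrapping, either approach is legitimate here. One cosmetic point: in your step~5 the factor $\Gamma(1-\alpha)/\alpha$ depends on how $\D_t^\alpha$ is normalized relative to $D_t^\alpha$; the paper absorbs this into $\Lambda$ and simply writes $\D_\e^\alpha Y_1 \geq \Lambda^{-1} D_\e^\alpha Y_1$, which is cleaner.
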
  
  
  \begin{proof}
   Since $Y_1(t)$ is an increasing function 
    \[
     \D_{\e}^{\alpha} Y(\e j) \geq \Lambda^{-1} D_{\e}^{\alpha} Y(\e j).
    \]
   Depending on $T$ and $\|f\|_{L^{\infty}}$, there exists $\e_0$ such that if $\e\leq \e_0$, then 
    \[
      \D_{\e}^{\alpha} Y(\e j) \geq \Lambda^{-1} D_{\e}^{\alpha} Y(\e j)  
      \geq \frac{2}{3\Lambda} D_{t}^{\alpha} Y(\e j)= \frac{4}{3}f  
    \]
   We use $(u(t,x)-Y(t))_+$ as a test function. Since $(u-Y)_+(0)=0$ it follows from \eqref{e:posneg} and Lemma \ref{l:discconvex} that 
    \[
     \e \sum_{j<k} (u-Y)_+ \D_{\e}^{\alpha} [(u-Y)_+ -(u-Y)_-] \geq 0. 
    \] 
    We define
     \[
      \B_{\zeta}(u,v) = \int_{\R^n}\int_{\R^n} H_{\zeta}(x,y)[u(x)-u(y)][v(x)-v(y)] \ dx \ dy, 
     \]
    where $H_{\zeta}(x,y)=\Delta K_{\zeta}$. 
   We have the identity \cite{cfv13}
    \[
     \B_{\zeta}(u,v) = \int_{\R^n}\int_{\R^n} \nabla u(x) \nabla K_{\zeta} v(y) \ dx \ dy.
    \]
   Then for $\e$ small enough and $j>0$
    \[
     \begin{aligned}
       &\e\sum_{j\leq k}\int_{B_R} f(\e j,x)(u(\e j, x)-Y(\e j))_+ \\
       &=\int_{B_R} \e \sum_{j\leq k}(u-Y)_+\D_{\e}^{\alpha}[(u-Y)_+ - (u-Y)_-+Y]  \\
       &\quad + \e\sum_{j\leq k} \int_{B_R} (u+d) \nabla (u-Y)_+ \nabla u \\
       &\quad +\e\sum_{j\leq k} \int_{B_R}(u+d) \nabla (u-Y)_+ \nabla K_{\zeta} u \\
       &\geq \int_{B_r}\e \sum_{j\leq k}\frac{4}{3}f(u-Y)_+ + \int_{\R^n}(u+d) \chi_{\{u>Y\}}\nabla u \nabla K_{\zeta} u \\
       &=\int_{B_R}\e \sum_{j\leq k}\frac{4}{3}f(u-Y)_+  \\
       &\quad + \frac{\e}{2}\sum_{j\leq k} \B_{\zeta}(\chi_{\{u>Y\}} (u+d)^2, u) \\
       &\geq \frac{4}{3} \e \sum_{j\leq k}\int_{B_R}f(\e j,x)(u(\e j,x)-Y(\e j))_+.  \\
     \end{aligned}
    \] 
    Thus $(u-Y)_+ \equiv 0$. 
  \end{proof}
  
  \begin{lemma}  \label{l:tail}
   Let $u$ be a solution to \eqref{e:daprox} in $[0,T]\times B_R$. 
   Assume that 
    \begin{equation}  \label{e:exponent}
       0\leq u(0,x)\leq Ae^{-|x|}, f \leq Ae^{-|x|}. 
    \end{equation}
   If $A$ is large there exists constants 
     \begin{gather*}
      \mu_0 , \delta_0, \zeta_0    \text{ depending only on } R,n,\sigma \\
      \e_0      \text{  depending only on  } T,\alpha, \Lambda \\
      C         \text{  depending only on  } n,\sigma, \| u(0,x)\|_{L^{\infty}}, \| f\|_{L^{\infty}} 
     \end{gather*}
   such that if $\mu < \mu_0, \delta < \delta_0, \zeta < \zeta_0, \e<\e_0$, and $Y_2(t)$ is the solution defined earlier with constant $C$ given above, 
   then
    \[
     0 \leq u(\e j,x) \leq AY_2(\e j)e^{-|x|}, 
    \]
   for any $t=\e j$. 
  \end{lemma}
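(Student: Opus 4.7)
The plan is to extend the barrier argument of Lemma \ref{l:boundab} to the space--time profile
\[
\Phi(t,x):=AY_2(t)e^{-|x|}.
\]
Since $Y_2(0)=2$, the hypothesis $0\le u_0(x)\le Ae^{-|x|}$ gives $u_0\le\tfrac12\Phi(0,\cdot)$, so the nonnegative function $g:=(u-\Phi)_+$ vanishes at $t=0$ and the integration-by-parts identities of Section \ref{s:discretize} apply to $g$. I would test \eqref{e:daprox} against $g(\e j,x)$ and sum over $j\le k$. Decomposing $u=g-(u-\Phi)_-+\Phi$, using \eqref{e:posneg} and applying Lemma \ref{l:discconvex} with $F(s)=s^2/2$ to $g$, the contributions of $g$ and $-(u-\Phi)_-$ to $\e\sum_j\int g\,\D_{\e}^{\alpha} u\,dx$ are nonnegative. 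For the remaining $\Phi$-contribution, the fact that $Y_2$ is increasing and satisfies $D_t^\alpha Y_2=C\Lambda^{-1}Y_2$ lets me choose, exactly as in Lemma \ref{l:boundab}, an $\e_0=\e_0(T,\alpha,\Lambda)$ for which
\[
\D_{\e}^{\alpha}\Phi(\e j,x)\ \ge\ \tfrac{2C}{3\Lambda^2}\Phi(\e j,x).
\]
This produces the dominant positive term $\tfrac{2C}{3\Lambda^2}\,\e\sum_{j\le k}\int g\Phi\,dx$.

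For the spatial part I would split $\nabla u=\nabla g+\nabla\Phi$ on $\{u>\Phi\}$. The ``self'' contributions $\delta\int\chi_{\{u>\Phi\}}(u+d)|\nabla g|^2$ and $\tfrac12\B_{\zeta}(\chi_{\{u>\Phi\}}(u+d)^2,u)$ are nonnegative just as in Lemma \ref{l:boundab} and may be discarded. The cross terms
\[
\delta\int\chi_{\{u>\Phi\}}(u+d)\nabla g\cdot\nabla\Phi\quad\text{and}\quad \int\chi_{\{u>\Phi\}}(u+d)\nabla g\cdot\nabla K_\zeta u
\]
are estimated by Cauchy--Schwarz using $|\nabla\Phi|\le\Phi$ and an $L^\infty$ bound on $\nabla K_\zeta u$ obtained from the smoothness of $K_\zeta$ combined with the $L^\infty\cap L^1$ control on $u$ provided by Lemma \ref{l:boundab}. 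After absorbing the quadratic $\nabla g$ pieces back into the discarded self-terms, the remaining error takes the form $M(d,\delta,\zeta,R,n,\sigma)\,\e\sum_j\int g\Phi\,dx$ with $M\to 0$ as $d,\delta,\zeta\to 0$. The right-hand side is handled by writing $f\le Ae^{-|x|}=\Phi/Y_2(t)\le\tfrac12\Phi$, valid since $Y_2\ge Y_2(0)=2$.

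Combining the previous two paragraphs gives
\[
\Bigl(\tfrac{2C}{3\Lambda^2}-\tfrac12-M(d,\delta,\zeta,R,n,\sigma)\Bigr)\,\e\sum_{j\le k}\int g\Phi\,dx\ \le\ 0.
\]
The parameters are then fixed in the following order, which is essential for the stated dependencies: first choose $\mu_0,\delta_0,\zeta_0$ small, depending only on $R,n,\sigma$, so that $M<1$; then choose $C$ large, depending on $n,\sigma,\|u_0\|_{L^\infty},\|f\|_{L^\infty}$, so that $\tfrac{2C}{3\Lambda^2}>\tfrac12+M$ strictly; and finally choose $\e_0$ so that the discrete-to-continuous Caputo comparison above holds. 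The resulting strict inequality forces $g\equiv 0$, which is the claim. The main obstacle is the nonlocal cross term $\int\chi_{\{u>\Phi\}}(u+d)\nabla g\cdot\nabla K_\zeta u$: the $L^\infty$ bound on $\nabla K_\zeta u$ that one obtains through the smoothing kernel degenerates as $\zeta\to 0$, so $\zeta_0$ must be chosen before $C$ is sent to infinity in order not to spoil the stated dependence of constants.
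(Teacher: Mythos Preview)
Your approach is genuinely different from the paper's and has a structural gap. The paper does not run an energy argument at all; it uses a pointwise barrier/touching argument. One takes $L\ge A$ so large that $u\le LY_2(\e j)e^{-|x|}$ (possible because the approximating solution is smooth and vanishes on $\partial B_R$), then lowers $L$ until the barrier first touches $u$ at some interior space--time point $(t_c,x_c)$; touching cannot occur on $\partial B_R$, at $x=0$ (cusp of $e^{-|x|}$), or at $t=0$ (since $Y_2(0)=2$). At that point one evaluates \eqref{e:daprox} in nondivergence form: the Caputo term satisfies $\D_\e^\alpha u\ge \D_\e^\alpha\bigl[LY_2 e^{-r_c}\bigr]\ge \tfrac{2}{3}CLY_2(t_c)e^{-r_c}$ for $\e$ small, while the diffusion terms are controlled using $|\nabla u|=LY_2 e^{-r_c}$, $\Delta u\le$ the barrier's Laplacian, and---crucially---the estimates from \cite{cv11} giving $|\partial_r p|,|\Delta p|\le MY_1(T)$ for $p=K_\zeta u$, with $M=M(n,\sigma)$ \emph{uniform in small $\zeta$}. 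Dividing through by $LY_2(t_c)e^{-r_c}$ yields an inequality of the form $\tfrac{2}{3}C\le c(n,\sigma)Y_1(T)+c$, so choosing $C$ larger (depending only on $n,\sigma,\|u_0\|_{L^\infty},\|f\|_{L^\infty}$) forces $L=A$.

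The gap in your proposal lies in the nonlocal spatial term. The identity behind Lemma~\ref{l:boundab}, $(u+d)\nabla(u-Y)_+=\nabla\bigl[(u-Y)_+^2/2+(Y+d)(u-Y)_+\bigr]$, works because $Y$ is constant in $x$; that is precisely why $\tfrac12\B_\zeta(\chi_{\{u>Y\}}(u+d)^2,u)$ is the \emph{entire} nonlocal contribution there. When $\Phi=AY_2(t)e^{-|x|}$ this fails: the correct identity is $(u+d)\nabla g=\nabla\bigl[g^2/2+(\Phi+d)g\bigr]-g\nabla\Phi$, so you cannot simultaneously list a nonnegative ``self'' term $\tfrac12\B_\zeta(\chi_{\{u>\Phi\}}(u+d)^2,u)$ and the full integral $\int(u+d)\nabla g\cdot\nabla K_\zeta u$ as a separate ``cross'' term---they are the same object. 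More damagingly, the only coercive quadratic form available for $\nabla g$ is the local $\delta\int(u+d)|\nabla g|^2$; absorbing the Cauchy--Schwarz remainder from the nonlocal piece into it forces your error constant $M$ to scale like $1/\delta$, which destroys the stated dependence of $C$. Your closing remark (that $\|\nabla K_\zeta u\|_{L^\infty}$ degenerates as $\zeta\to0$) also contradicts your earlier claim that $M\to0$ as $\zeta\to0$; in fact the \cite{cv11} bound is uniform in small $\zeta$ but does not vanish, and in an energy argument the analogous bounds on $\nabla K_\zeta\Phi$ would reintroduce $AY_2(T)$---hence $C$ itself---into the error, making the scheme circular. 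The pointwise argument sidesteps all of this because the diffusion estimates at the touching point involve only $\|u\|_{L^\infty}=Y_1(T)$, never $\Phi$.
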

 
  \begin{proof} 
   As before there exists $\e_0$ depending only on $T, \alpha$ such that for $\e \leq \e_0$ we have
    \[
     \D_{\e}^{\alpha} Y_2(\e j) \geq \frac{2}{3\Lambda} D_{t}^{\alpha} Y_2(\e j).  
    \]
   Since $u$ is smooth and hence continuous, $u \leq LY_2(\e j)e^{-|x|}$ for some $L>A$. 
   We lower $L\geq A$ until it touches $u$ for the first time. 
   Since $u=0$ on $\partial B_R$
   this cannot happen on the boundary. Since $u$  is smooth this 
   cannot happen at a point $(\e j, 0)$. Also, $LY_2 \geq 2A\geq 2u(0,x)$, so this cannot occur at the initial time.  
   We label a point of touching as $(t_c,r_c)$. We compute the operator in nondivergence form 
   and write $K_{\zeta}(u)=p$ and use 
   the estimates in \cite{cv11} to conclude for $\e$ small enough that 
    \[
     \begin{aligned}
      \frac{2}{3}CLY_2(t_c)e^{-r_c} 
        &= \frac{\Lambda}{3}L D_t^{\alpha} Y_2(t_c )e^{-r_c} \\
        &\leq \D_{\e}^{\alpha} LY_2(t_c)e^{-r_c} \\
        &\leq  \D_{\e}^{\alpha} u(t_c) \\
        &= \delta \text{div}((u+d)\nabla u) + \text{div}((u+d) \nabla K_{\zeta} u) +f(t_c,r_c)\\
        &= \delta 2[LY_2(t_c)e^{-r_c}]^2 + \delta d LY_2(t_c)e^{-r_c} \\
        &\quad  -LY_2(t_c)e^{-r_c} \overline{\partial_r p}  +(LY_2(t_c)e^{-r_c}+d) \overline{\Delta p} + f(t_c,r_c),
     \end{aligned}
    \]
   where in the equation the bar above means evaluation at $r_c$
   Then using again the estimates from \cite{cv11}, for small enough $\zeta$ we have a universal constant $M$ depending only on  $n,\sigma$ such that
    \[
     |\overline{\partial_r p}|, |\overline{\Delta p}| \leq Y_1(T)M.  
    \]
   Now recalling also that $LY_2(t_c)e^{-r_c}\leq Y_1(T)$
    \[
     \begin{aligned}
     \frac{2}{3}C &\leq \delta(2+d)Y_1(T) + MY_1(T) + \left(1+\frac{d}{LY_2(t_c)} e^{r_c} \right) MY_1(T) + \frac{f(r_c)e^{r_c}}{LY_2(t_c)} \\
                  &\leq \delta(2+d)Y_1(T) + 2MY_1{T}\left(1+\frac{d}{L} e^{r_c} \right) +\frac{A}{L}.
     \end{aligned}
    \]
   Choosing $\delta,d$ small enough the above inequality implies 
    \[
     C \leq 4MY_1(T) +4.
    \]
   If we choose now $C>2MY_1(T)+4$ we obtain a contradiction. We note that $C$ will only depend on 
   $n,\sigma, \|u(0,x) \|_{L^{\infty}},\|f \|_{L^{\infty}} $.
  \end{proof}

 We now give some Sobolev estimates. Because we have a right hand side we choose to not use $\ln(u)$ as the 
 test function. For $0<\gamma<1$, we use $(u+d)^\gamma-d^\gamma$ as a test function. 
  The function
   \[
    F(t)=\frac{1}{\g +1} (t+d)^{\g+1} -d^\g t
   \]
  will satisfy the conditions in Lemma \ref{l:discconvex}. 
  We now assume $u$ is a solution to \eqref{e:daprox} with assumptions as in Lemma \ref{l:tail}, so that 
  $|u|\leq Me^{-|x|}$ for some large $M$. 
  As discussed in the introduction we can extend $u(\e j,x)=u(0,x)$ for $j<0$, and $u$ will be a solution to 
  \eqref{e:daprox} on $(-\infty,T)\times \R^n$ with right hand side
   \[
    \delta \text{div}((u(0,x)+d)\nabla u(0,x)) + \text{div}((u(0,x)+d)\nabla (-\Delta)^{-\sigma}u(0,x)). 
   \]
  for $j \leq 0$. 
  This right hand side is not necessarily nonnegative; however, we only required the nonnegativity of the right hand side to guarantee that our
  solution is nonnegative. In this case we already know our solution is nonnegative. 
  We fix a smooth cut-off $\phi(t)$ with $\phi(t)\geq M$ for $t \leq -2$ and $\phi(t)=0$ for $t\geq -1$. 
   We
  now take our test function as $\e F'([u(t,x)-\phi(t)]_+)$. We define 
   \[
    u= (u-\phi)_+ - (u-\phi)_- + \phi =: u_{\phi}^+   - u_{\phi}^- + \phi. 
   \]
  We define $\tilde{u}=u(t)$ for $\e j-1<t\leq \e j$. From Lemma \ref{l:discconvex} 
  and the estimates in Section \ref{s:discretize}, there exist two constans $c,C$ depending on $\alpha,T, \Lambda$ such that for $\e <1$,
   \[
    \begin{aligned}
    &\e \sum_{j \leq k} F'(u_{\phi}^+(\e j))\D_{\e}^{\alpha} u(\e j) \\
     &\geq c\int_{-\infty}^T \int_{-\infty}^t \frac{[\tilde{u}_{\phi}^+(t) - \tilde{u}_{\phi}^+(s)]^2}{(t-s)^{1+\alpha}} \\
     &\quad + c\int_{-\infty}^T \frac{F(\tilde{u}_{\phi}^+(t))}{(T-t)^{\alpha}} \ dt \\
     &\quad - C\int_{-\infty}^T  F'(\tilde{u}_{\phi}^+(t)) D_t^{\alpha} \phi(t) \ dt
    \end{aligned}
   \]
  We now consider the nonlocal spatial term. 
  We will also use the following property: For an increasing function $V$ and a constant $l$
   \[
    \B_{\zeta}(V((u-l)_+),u) \geq \B_{\zeta}(V((u-l)_+),(u-l)_+) \geq 0.
   \]
  We have for the nonlocal spatial terms
   \[
    \begin{aligned}
     &\e \sum_{j\leq k} \int_{B_R} \nabla  F'(u_{\phi}^+(\e j,x)) (u+d) \nabla K_{\zeta} u \\
     &= \e \sum_{j\leq k} \int_{B_R} \nabla  F'(u_{\phi}^+(\e j,x)) [(u_{\phi}^+)+ d + \phi] \nabla K_{\zeta} u \\
     &= \frac{\g}{\g+1}\int_{-2}^{T} \B_{\zeta}((\tilde{u}_{\phi}^+ +d)^{\g+1}, u)  \\
     &\quad + \int_{-2}^{T} \phi(t)\B_{\zeta}((\tilde{u}_{\phi}^+ +d)^{\g}, u)  \\
     &\geq \frac{\g}{\g+1}\int_{-2}^{T} \B_{\zeta}((\tilde{u}_{\phi}^+ +d)^{\g+1}, u)  \\
     &\geq \frac{\g}{\g+1}\int_{-2}^{T} \B_{\zeta}((\tilde{u}_{\phi}^+ +d)^{\g+1}, u_{\phi}^+) 
    \end{aligned}
   \]
  From Proposition \ref{p:gamma}, if $u_{\phi}^+(x)-u_{\phi}^+(y)\geq0$, then
   \[
    (u_{\phi}^+ +d)^{\g+1}(x) - (u_{\phi}^+ +d)^{\g+1}(y) \geq (u_{\phi}^+(x) -u_{\phi}^+(y))^{\g+1}. 
   \]
  Then 
   \[
    \e \sum_{j\leq k} \int_{B_R} \nabla  F'(u_{\phi}^+(\e j,x)) (u+d) \nabla K_{\zeta} u 
    \geq \frac{c\g}{\g+1}\int_{0}^{T} \int_{\R^n} \int_{\R^n} H_{\zeta}(x,y)|\tilde{u}(x)-\tilde{u}(y)|^{2+\g} \ dx \ dy \ dt. 
   \]
  For the local spatial term we have
   \[
     \e \sum_{j\leq k} \int_{B_R} \nabla F'(u_{\phi}^+(\e j,x)) (u+d) \nabla u  
     \geq  \g \int_{0}^T \int_{B_R} (\tilde{u}+d)^{\g}|\nabla \tilde{u}|^2. 
   \]
  Now combining the previous estimates with the right hand side term $f$ we have for a certain constant $C$ depending on 
  $n,\sigma, \alpha, \Lambda, \gamma, M, T$ which can change line by line. 
   \[
    \begin{aligned}
     &\delta \g \int_{0}^T \int_{B_R} (\tilde{u}+d)^{\g}|\nabla \tilde{u}|^2 \\
     &\quad + \frac{c\g}{\g+1}\int_{0}^{T} \int_{\R^n} \int_{\R^n} H_{\zeta}(x,y)|\tilde{u}(x)-\tilde{u}(y)|^{2+\g} \ dx \ dy \ dt \\
     &\quad + c\int_{B_R}\int_{0}^T \int_{0}^t \frac{[\tilde{u}(t) - \tilde{u}(s)]^2}{(t-s)^{1+\alpha}} \\
     &\quad + c\int_{B_R}\int_{0}^T \frac{F(\tilde{u}(t))}{(T-t)^{\alpha}} \ dt \\
     &\leq C\int_{B_R} \int_{-2}^T F'(\tilde{u}_{\phi}^+(t)) D_t^{\alpha} \phi(t) \ dt \\
     &\quad + \int_{-2}^T \int_{B_R} f(t,x) F'(\tilde{u}_{\phi}^+) \\
     &\leq C \int_{-2}^T \int_{B_R} (\tilde{u}_{\phi}^+ +d)^{\g}-d^{\g} \\
     & \leq C \int_{-2}^T \int_{B_R} (Me^{-|x|}+d)^{\g}-d^{\g}
      \leq C \int_{-2}^T \int_{B_R} M^{\g} e^{-\g|x|} \leq C.
    \end{aligned} 
   \]
  The second to last inequality comes from Proposition \ref{p:exp}. The value $C$ is independent of 
  $\zeta, d, R, \e, \delta$ if 
   \[
    \zeta, \epsilon, \delta , d< 1 \text{ and }  R>1. 
   \]
  Then as $\zeta, d \to 0$ we have uniform control and obtain the estimate 
   \begin{equation}  \label{e:almost}
    \begin{aligned}
      &\delta \g \int_{0}^T \int_{B_R} (\tilde{u}+d)^{\g}|\nabla \tilde{u}|^2 
      + \int_{0}^T  \| \tilde{u}\|_{W^{(2-2\sigma)/(2+\g), 2+\g}(B_R)}^{2+\g}  \\
      &\quad + \int_{B_R} \|\tilde{u} \|_{W^{\alpha/2,2}(0,T)}^2  
     \leq C  
    \end{aligned}
   \end{equation} 
  Notice that the constant $C$ only depends on the exponential decay of $f, u_0$ and  on $\sigma, \alpha, n, T$, but not on 
  $R,\delta$. Letting $d, \zeta \to 0$ we obtain 
   \begin{equation}  \label{e:d2aprox}
    \D_{\e}^{\alpha} u - \delta \text{div}(u\nabla u) - \text{div}(u \nabla (-\Delta)^{-\sigma} u ) = f \text{  on  }  [0,T]\times B_R.
   \end{equation}
  
  We now give a compactness result.
  
  \begin{lemma}  \label{l:compact}
   Assume for any $v \in \mathcal{F}$,   
    \begin{equation}  \label{e:compact}
     \int_{0}^T  \| v(t,x)\|_{W^{(2-2\sigma)/(2+\g), 2+\g}(B_R)}^{2+\g}  
      + \int_{B_r} \|v(t,x) \|_{W^{\alpha/2,2}(0,T)}^2  
     \leq C.  
    \end{equation}
   Then $\mathcal{F}$ is totally bounded in $L^p([0,T]\times B_R)$ for $1\leq p\leq 2$. 
  \end{lemma}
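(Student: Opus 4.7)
The plan is to apply the Kolmogorov--Riesz--Fr\'echet compactness criterion on the bounded cylinder $[0,T]\times B_R$. Because this domain has finite measure, it will suffice to establish precompactness in $L^2([0,T]\times B_R)$; the conclusion for $1\leq p\leq 2$ follows from H\"older's inequality. Boundedness in $L^2$ is immediate from the hypothesis: the Gagliardo seminorm controls the $L^{2+\gamma}(B_R)$ norm, so the first term in \eqref{e:compact} gives a uniform bound on $\|v\|_{L^{2+\gamma}([0,T]\times B_R)}$, and $2+\gamma>2$ on a bounded domain yields $L^2$ boundedness.

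The substance of the argument is equicontinuity in space and time separately. For spatial translations, the idea is to use the fact that for a bounded Lipschitz domain and $0<s<1$, the fractional Sobolev norm controls the modulus of continuity of translations:
\[
 \|\tau_h v - v\|_{L^{2+\gamma}(B_R\cap (B_R-h))} \leq C |h|^s \|v\|_{W^{s,2+\gamma}(B_R)},
\]
with $s=(2-2\sigma)/(2+\gamma)$. Raising to the power $2+\gamma$ and integrating in time against the first bound of \eqref{e:compact} gives
\[
 \int_0^T \|\tau_h v(t)-v(t)\|_{L^{2+\gamma}}^{2+\gamma}\,dt \leq C|h|^{2-2\sigma},
\]
which vanishes as $h\to 0$ uniformly in $v\in \mathcal{F}$. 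H\"older then transfers this to $L^2$.

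For temporal translations, the same Besov-embedding estimate applied to the one-variable space $W^{\alpha/2,2}(0,T)$ gives, pointwise in $x$,
\[
 \int_0^{T-\tau} |v(t+\tau,x)-v(t,x)|^2\,dt \leq C\tau^{\alpha} \|v(\cdot,x)\|_{W^{\alpha/2,2}(0,T)}^2.
\]
Integrating in $x$ against the second bound of \eqref{e:compact} yields $\|\tau_\tau v-v\|_{L^2([0,T-\tau]\times B_R)}^2\leq C\tau^\alpha \to 0$. A triangle inequality combines the two moduli of continuity for joint spatial-temporal translations, and Kolmogorov--Riesz--Fr\'echet then delivers precompactness in $L^2$.

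The main technical obstacle is rigorously establishing the translation estimate $\|\tau_h v-v\|_{L^q(\Omega_h)}\leq C|h|^s\|v\|_{W^{s,q}(\Omega)}$ on the bounded Lipschitz domain $B_R$ rather than on $\mathbb{R}^n$. The natural route is to invoke an extension operator $E:W^{s,q}(B_R)\to W^{s,q}(\mathbb{R}^n)$ (as in \cite{dpv12}) and apply the standard Besov-space translation estimate on $\mathbb{R}^n$; alternatively, one can verify it directly by an averaging argument against the Gagliardo seminorm. Handling the boundary layer correctly, so that translations that move mass in or out of $B_R$ do not accumulate uncontrolled error, is where all the care is needed, while the rest of the proof is a clean assembly of classical compactness ingredients.
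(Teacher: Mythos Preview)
Your argument via the Kolmogorov--Riesz--Fr\'echet criterion is correct, but it is a genuinely different route from the one the paper takes. The paper does not pass through translation estimates at all. Instead it builds an $\epsilon$-net directly: it partitions $[0,T]$ into intervals of length $l$, replaces each $v$ by its time-piecewise average $v_l$, and uses the $W^{\alpha/2,2}(0,T)$ bound together with an inequality from \cite{dpv12} to get $\|v-v_l\|_{L^2}^2\leq Cl^{\alpha}$ uniformly in $\mathcal{F}$. Then Minkowski's inequality pushes the spatial $W^{(2-2\sigma)/(2+\gamma),2+\gamma}(B_R)$ bound through the time-averaging, so that for each of the finitely many time slabs the family of averages sits in a bounded subset of $W^{s,2+\gamma}(B_R)$; the compact embedding of that space into $L^2(B_R)$ (again quoting \cite{dpv12}) then yields a finite cover on each slab, and assembling the slabs gives total boundedness.

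The trade-off is this: your approach is conceptually cleaner once the translation inequality $\|\tau_h v-v\|_{L^q}\leq C|h|^s[v]_{W^{s,q}}$ is in hand, but, as you correctly flag, making that inequality rigorous on the bounded domain $B_R$ (rather than $\mathbb{R}^n$) requires either an extension operator or a careful boundary-layer argument. The paper's approach sidesteps the translation issue entirely by invoking the compact Sobolev embedding on $B_R$ as a black box, at the cost of a slightly more hands-on $\epsilon$-net construction. One minor wording issue in your write-up: the Gagliardo \emph{seminorm} does not control the $L^{2+\gamma}$ norm (constants lie in its kernel); what you need, and what the hypothesis actually gives, is the full $W^{s,2+\gamma}$ norm, which already contains the $L^{2+\gamma}$ piece.
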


  \begin{proof}
   We utilize the proof provided in \cite{dpv12} for compactness in fractional Sobolev spaces.  We will 
   show the result for $p=2$, and it will follow for $p<2$ since $B_R$ is a bounded set. We divide $T$ into $k$ increments. (This $k$ is unrelated to
   the number $k$ for the $\e$ approximations). Let $l = T/k$. We define
    \[
     v_l(x,t) := \frac{1}{l} \int_{jl}^{j(l+1)}v(x,s) \ ds. 
    \]
   From \cite{dpv12}
    \begin{equation}  \label{e:vl}
     \int_{B_R} \int_{0}^T [v_l(x,t)-v(x,t)]^2 \leq c_{\alpha} l^{\alpha} \int_{B_R} \|v(x,\cdot) \|_{W^{\alpha/2,2}(0,T)}^2 \leq Cl^{\alpha}.
    \end{equation}
   The above estimate is uniform for any $v_j$. We now utilize that $[0,T]$ is a finite measure space 
   as well as Minkowski's inequality: the norm
   of the sum is less than or equal to the sum of the norm.  
    \[
     \begin{aligned}
      C &\geq \sum_{j=0}^{k-1} \int_{lj}^{l(j+1)} \| v(\cdot, t) \|_{W^{(2-2\sigma)/(2+\g), 2+\g}(B_R)}^{2+\g} \\
        &\geq \sum_{j=0}^{k-1} \frac{1}{l^{1+\g}} \left( \int_{lj}^{l(j+1)} \| v(\cdot, t) \|_{W^{(2-2\sigma)/(2+\g), 2+\g}(B_R)} \right)^{2+\g} \\
        &\geq \sum_{j=0}^{k-1} \frac{1}{l^{1+\g}} \left( \left\| \int_{lj}^{l(j+1)}  v(x, t) \right\|_{W^{(2-2\sigma)/(2+\g), 2+\g}(B_R)} \right)^{2+\g} \\
        &= l \sum_{j=0}^{k-1} \left( \left\| \frac{1}{l}\int_{lj}^{l(j+1)}  v(x, t) \right\|_{W^{(2-2\sigma)/(2+\g), 2+\g}(B_R)} \right)^{2+\g} \\
     \end{aligned}
    \]
   It then follows from the result in \cite{dpv12} that for every $j$ and $\lambda>0$ there exists finitely many $\{\beta_1, \ldots, \beta_{M_j} \}$ such that 
   for any fixed $j$ and $v\in \mathcal{F}$ there exists $\beta_i \in \{\beta_1, \ldots, \beta_{M_j} \} $ such that 
    \[
     \int_{B_R} \left|\beta_i - \frac{1}{l}\int_{lj}^{l(j+1)}  v(x, t) \right|^2 \leq \lambda. 
    \]
   Then combining the above estimate with \eqref{e:vl} we obtain that 
    \[
     \begin{aligned}
     \int_{B_R} \int_{0}^T |v-\beta_{i,j}|^2 &\leq \int_{B_R} \int_{0}^T |v-v_l|^2+ \int_{B_R} \int_{0}^T |v_l-\beta_{i,j}|^2 \\
      &= \int_{B_R} \int_{0}^T |v-v_l|^2+  l\sum_{j=0}^{k-1}  \int_{B_R}|v_l-\beta_{i,j}|^2 \\
      & Cl^{\alpha} + T\lambda.
     \end{aligned}
    \]
   Since $l,\lambda$ can be chosen arbitrarily small, $\mathcal{F}$ is totally bounded. 
  \end{proof}
 
 The following result will guarantee that $\nabla (-\Delta)^{-\sigma} u \in L^p$ as $\delta \to 0, R \to \infty$. 
  \begin{lemma}  \label{l:triebel}
   Let $u$ be a solution to \eqref{e:d2aprox} with right hand side $f$ and $u_0$ both satisfying the exponential bound \eqref{e:exponent}.  
   Then 
    \[
     \int_{0}^T \| (-\Delta)^{-\sigma} u(t,\cdot)  \|_{W^{(2-2\sigma)/(2+\g) + 2\sigma, 2+\g}}^{2+\gamma} \ dt  \leq C 
    \]
   with the constant $C$ depending only on the exponential bounds in \eqref{e:exponent}, $n, \gamma,T$. 
  \end{lemma}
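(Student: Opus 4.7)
The plan is to obtain this estimate by combining the uniform bound \eqref{e:almost} coming from the energy/Sobolev estimates with the lifting (gain of derivatives) property of the Riesz potential $(-\Delta)^{-\sigma}$, using the exponential decay from Lemma \ref{l:tail} to pass from $B_R$ to $\mathbb R^n$.

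First I would use \eqref{e:almost} — which is uniform in $R,\delta,d,\zeta,\epsilon$ — to record
\[
\int_0^T \|\tilde u(t,\cdot)\|_{W^{(2-2\sigma)/(2+\gamma),\,2+\gamma}(B_R)}^{2+\gamma}\,dt \leq C.
\]
Extending $\tilde u$ by zero outside $B_R$ and invoking the exponential decay $0\le \tilde u(t,x)\le A\,Y_2(T)\,e^{-|x|}$ from Lemma \ref{l:tail}, the tail contributions to the Gagliardo seminorm on $\mathbb R^n$ (integrals over $\{|x|>R\}$ or over $|x-y|$ large) are controlled by the exponentially small $L^1$ and $L^\infty$ norms of $\tilde u$. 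This upgrades the previous bound to the corresponding $W^{(2-2\sigma)/(2+\gamma),\,2+\gamma}(\mathbb R^n)$ estimate with a constant depending only on the exponential decay data, $n,\gamma,T$.

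Next I apply the Riesz potential lifting theorem. The operator $(-\Delta)^{-\sigma}$ raises smoothness by $2\sigma$; writing it via its Bessel-potential counterpart $(I-\Delta)^{-\sigma}$ plus a lower-order perturbation that is absorbed by the exponential decay (which gives $\tilde u(t,\cdot)\in L^q$ for every $q\in[1,\infty]$ uniformly), one has the continuous embedding
\[
(-\Delta)^{-\sigma}:W^{s,p}(\mathbb R^n)\longrightarrow W^{s+2\sigma,p}(\mathbb R^n)
\]
for $p\in(1,\infty)$ in the Hardy--Littlewood--Sobolev range, namely $2\sigma p<n$. With $p=2+\gamma$ and $s=(2-2\sigma)/(2+\gamma)$, this is automatic for $n\ge 2$; for $n=1$ it requires $2\sigma(2+\gamma)<1$, which is exactly the source of the restriction $\sigma<1/4$ announced in Remark \ref{r:n1}. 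Applying this bound pointwise in $t$ and raising to the $(2+\gamma)$-th power gives
\[
\|(-\Delta)^{-\sigma}\tilde u(t,\cdot)\|_{W^{(2-2\sigma)/(2+\gamma)+2\sigma,\,2+\gamma}(\mathbb R^n)}^{2+\gamma}
\;\le\; C\,\|\tilde u(t,\cdot)\|_{W^{(2-2\sigma)/(2+\gamma),\,2+\gamma}(\mathbb R^n)}^{2+\gamma},
\]
and integrating in $t$ yields the desired estimate.

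The main obstacle is the lifting step on the full space $\mathbb R^n$: the Riesz potential is not $L^p\to L^p$ bounded, so one cannot naively quote an isometric lifting on Slobodeckij spaces. The passage through the Bessel potential $(I-\Delta)^{-\sigma}$, together with HLS to handle the zeroth-order piece, is what keeps everything in $L^{2+\gamma}$; the exponential decay from Lemma \ref{l:tail} is essential here, both to justify the decomposition and to control the low-frequency/long-range behavior of $(-\Delta)^{-\sigma}u$. Handling the possibility that $(2-2\sigma)/(2+\gamma)+2\sigma$ crosses integer thresholds (where the nature of $W^{s,p}$ changes) is cleanest in the Bessel-potential / Triebel--Lizorkin formulation, which is the unified framework I would use throughout.
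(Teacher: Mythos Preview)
Your proposal is correct and follows essentially the same strategy as the paper: use the energy bound \eqref{e:almost} for the fractional seminorm, use the exponential decay from Lemma \ref{l:tail} to control the $L^p$ piece, and then invoke the $2\sigma$-smoothing of the Riesz potential together with Hardy--Littlewood--Sobolev, noting the $n=1$ restriction that forces $\sigma<1/4$.

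The only difference is in packaging. The paper works directly with the decomposition $W^{\beta,p}(\R^n)=L^p(\R^n)\cap \dot B^{\beta}_{p,p}(\R^n)$ and treats the two pieces separately: the homogeneous Besov seminorm is lifted by the exact identity $\|(-\Delta)^{-\sigma}u\|_{\dot B^{\beta+2\sigma}_{p,p}}\le C\|u\|_{\dot B^{\beta}_{p,p}}$, while the $L^{2+\g}$ norm of $(-\Delta)^{-\sigma}u$ is obtained via HLS from $\|u\|_{L^q}$ with $q=\frac{(2+\g)n}{n+2\sigma(2+\g)}$, the exponential decay giving $u\in L^q$ uniformly. Your route through the Bessel potential $(I-\Delta)^{-\sigma}$ plus a low-frequency correction is an equivalent way to organize the same two ingredients; the paper's homogeneous-Besov formulation is slightly cleaner because the Riesz lifting is an honest isomorphism there and one avoids having to split off and separately estimate the Bessel remainder.
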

  
  \begin{proof}
   $u$ is extended to be zero outside of $B_R$. The proof is a consequence of the following results found in \cite{t10}. 
    \[
     W^{\beta,p}(\R^n) = B_{p,p}^{\beta}(\R^n) = L^p(\R^n) \cap \dot{B}_{p,p}^{\beta}.
    \]
   We also have the lifting property of the Riesz potential for the homogeneous Besov spaces
    \[
     \| (-\Delta)^{-\sigma} u\|_{\dot{B}_{p,p}^{\beta+2\sigma}} \leq C  \|  u\|_{\dot{B}_{p,p}^{\beta}}
    \]
   To bound $u$ in the nonhomogeneous Besov space we recall
    \[
     \| (-\Delta)^{-\sigma} u\|_{L^{nq/(n-2\sigma q)}(\R^n)} \leq C  \|  u\|_{L^q(\R^n)}.
    \]
   for any $1\leq q<n/(2\sigma)$. From the exponential bounds \eqref{e:exponent} and growth we have that $u$ is uniformly in $L^q$ for all $1\leq q \leq \infty$. 
   Letting
    \[
     q = \frac{(2+\g)n}{n+2\sigma(2+\g)} > 1 \text{  for  } \sigma < 1/2 \text{  and  }  n\geq2,
    \]
   (or let $\sigma <1/4$ for $n=1$), we obtain by the finite length of $T$
    \[
    \int_0^T \| (-\Delta)^{-\sigma} u\|_{L^{2+\g}(\R^n)}^{2+\g}\leq C.
    \] 
   Using again the characterization of homogeneous besov spaces we obtain the result. 
  \end{proof}

  \begin{corollary}
   Let $u_k$ be a sequence of solutions to \eqref{e:d2aprox} with $R \to \infty$ and $\delta \to 0$. For fixed $\rho>0$, there exists a subsequence and
   limit with 
    \[
     u_k \to u_0 \in L^p(B_{\rho}) \text{  for  } 1\leq p\leq 2 \text{  and   } u_k \rightharpoonup u_0 \in W^{(2-2\sigma)/(2+\g), 2+\gamma }.
    \]
   Furthermore, for any compactly supported $\phi$
    \begin{equation}  \label{e:recurse}
     \e \sum_{j\leq k} \int_{\R^n} \left[ \phi(x, \e j)\D_{\e}^{\alpha} u_0(\e j, x) + u_0\nabla \phi \nabla (-\Delta)^{-\sigma} u_0 \right]
      = \e \sum_{j\leq k}\int_{\R^n}f\phi
    \end{equation}
  \end{corollary}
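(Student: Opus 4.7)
My plan is to first extract a convergent subsequence via Lemma~\ref{l:compact} and then to pass to the limit term-by-term in the weak form of \eqref{e:d2aprox} tested against $\phi(\e j,\cdot)$, with the discretization parameter $\e$ held fixed throughout.

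For compactness, I observe that the estimate \eqref{e:almost} is uniform in $R>1$ and $\delta<1$, while Lemma~\ref{l:tail} supplies the pointwise bound $u_k(\e j,x)\le CY_1(T)e^{-|x|}$ independent of $R$ and $\delta$. Fixing $\rho>0$, the hypothesis of Lemma~\ref{l:compact} is satisfied on $B_\rho\times[0,T]$ and yields a subsequence with $u_k\to u_0$ strongly in $L^p(B_\rho\times[0,T])$ for $1\le p\le 2$. A standard diagonal argument over an exhaustion $\rho_j\nearrow\infty$ produces a single limit $u_0\in L^p_{\mathrm{loc}}$; the uniform exponential decay upgrades this to $L^p(\R^n\times[0,T])$ convergence. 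The anisotropic Sobolev bound then passes to weak convergence in $W^{(2-2\sigma)/(2+\g),2+\g}$ along a further subsequence by reflexivity.

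Next I would pass to the limit in the weak form. The linear pieces $\e\sum_{j\le k}\int\phi\,\D_\e^\alpha u_k$ and $\e\sum_{j\le k}\int f\phi$ converge by strong $L^1_{\mathrm{loc}}$ convergence (for fixed $\e$ the sum in $j$ is finite, and the tail of $\D_\e^\alpha$ is controlled by the exponential decay of the fixed initial data). The vanishing-viscosity term integrates by parts to
\[
\delta\,\e\sum_{j\le k}\int_{\R^n}\nabla\phi\cdot u_k\nabla u_k\,dx
= -\tfrac{\delta}{2}\,\e\sum_{j\le k}\int_{\R^n}\Delta\phi\,u_k^2\,dx \longrightarrow 0,
\]
using only $\|u_k\|_{L^\infty}\le CY_1(T)$ and $\delta\to 0$.

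The main obstacle is the nonlinear nonlocal term $\int\nabla\phi\cdot u_k\nabla(-\Delta)^{-\sigma}u_k\,dx$. I plan to use the representation $\nabla(-\Delta)^{-\sigma}u(x)=-c_{n,\sigma}(n-2\sigma)\int\frac{x-y}{|x-y|^{n+2-2\sigma}}u(y)\,dy$ and symmetrize in $x,y$ to rewrite this integral as
\[
\tfrac{c_{n,\sigma}(n-2\sigma)}{2}\int\!\!\int u_k(x)u_k(y)\bigl[\nabla\phi(y)-\nabla\phi(x)\bigr]\cdot\frac{x-y}{|x-y|^{n+2-2\sigma}}\,dx\,dy.
\]
Smoothness of $\phi$ gives $|\nabla\phi(x)-\nabla\phi(y)|\le C|x-y|$, reducing the kernel singularity to $|x-y|^{-(n-2\sigma)}$, which is locally integrable since $\sigma>0$; compact support of $\nabla\phi$ combined with the uniform exponential decay of $u_k$ produces a uniformly integrable majorant at large distances. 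Strong $L^2_{\mathrm{loc}}$ convergence of $u_k$ (together with a.e.\ convergence along a further subsequence) and dominated convergence then pass the limit, and summing in $j$ produces \eqref{e:recurse}.
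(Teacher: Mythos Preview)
Your argument is correct, and the compactness and linear parts track the paper's proof closely.  Where you diverge is in the treatment of the nonlinear term $\int \nabla\phi\cdot u_k\,\nabla(-\Delta)^{-\sigma}u_k$.  The paper does not symmetrize; instead it invokes Lemma~\ref{l:triebel} (the Besov-space lifting of the Riesz potential) to obtain a uniform bound on $\nabla(-\Delta)^{-\sigma}u_k$ in $L^{2+\gamma}(\R^n)$, hence weak $L^{2+\gamma}$ convergence along a subsequence, and then pairs this weak convergence against the strong $L^{(2+\gamma)/(1+\gamma)}$ convergence of $u_k$ on $\operatorname{supp}\phi$ to pass the product to the limit.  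Your route is more elementary---it bypasses the Besov machinery entirely and needs only the uniform $L^\infty$ bound and exponential decay from Lemmas~\ref{l:boundab}--\ref{l:tail} to produce an integrable majorant for the symmetrized kernel $|x-y|^{-(n-2\sigma)}$.  The trade-off is that the paper's approach yields \eqref{e:pconv} as a byproduct, and that weak convergence of $\nabla(-\Delta)^{-\sigma}u$ in $L^{2+\gamma}$ is cited again verbatim in the proof of Theorem~\ref{t:existence} when $\e\to 0$; your symmetrization trick would have to be repeated there (which it can be, with the same majorant), whereas the paper reuses Lemma~\ref{l:triebel} once and for all.  One small point worth making explicit in your write-up: the unsymmetrized double integral has a non-integrable kernel $|x-y|^{-(n+1-2\sigma)}$, so the symmetrization identity should be justified either by approximation (mollify $u_k$) or by first integrating by parts onto $(-\Delta)^{-\sigma}u_k$ before introducing the kernel.
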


  \begin{proof}
   The strong and weak convergence is an immediate result of the bound \eqref{e:almost} and Lemma \ref{l:compact}. 
   For $\gamma$ small enough depending on $\sigma$, then
    \[
     \frac{2-2\sigma}{2+\gamma} + 2\sigma >1. 
    \] 
   Then  
   from Lemma \ref{l:triebel} we have that 
    \[
     \nabla (-\Delta)^{-\sigma} u_k \rightharpoonup \nabla (-\Delta)^{-\sigma} u_0 \in W^{(2-2\sigma)/(2+\g)+2\sigma-1, 2+\gamma },
    \]
   And in particular
    \begin{equation}  \label{e:pconv}
     \nabla (-\Delta)^{-\sigma} u_k \rightharpoonup \nabla (-\Delta)^{-\sigma} u_0 \in L^{2+\g}(\R^n).
    \end{equation}
   Then it is immediate from the weak and strong convergence that $u_0$ is a solution. 
  \end{proof}

 
  We now show the 
  \begin{proof}[Proof of Theorem \ref{t:existence}]
   We first assume $f, u_0$ smooth and satisfying the exponential bounds \eqref{e:exponent}. 
   Consider solutions $u_{\e}$ to \eqref{e:recurse} over a finite interval $(0,T)$.   
   As before, as $\e \to 0$ there exists a subsequence and a limit 
   $u_{\e} \to u_0$ with the weak convergence as in \eqref{e:pconv} and strong convergence over compact sets for $1\leq p \leq 2$ just as in Lemma
   \ref{l:compact}. Then for fixed $\phi\in C_0^{\infty}$, that $u_0$ is a solution follows from this convergence. The spatial piece and right hand
   side is straightforward to show, and the nonlocal time piece is taken care of as in \cite{acv15}. We now consider a sequence of 
   solution $\{u_j\}$ with  $\{f_j\}, \{(u_0)_j\} \in C^{\infty}$
   with $f_j \to f$ and $ (u_0)_j \to u_0$ in weak$^*$ $L^{\infty}$. Then again there exists a limit solution $u$ 
   with right hand side $f$. From Remark \ref{r:unique}
   and Lemma \ref{l:boundab} we can let $T \to \infty$. 
  \end{proof}

 \begin{remark}  \label{r:finite}
  In this Section we have shown how the estimates in \cite{cv11} work for equations of the form \eqref{e:main}. In the same way one can show that 
  the method of ``true (exaggerated)  supersolutions'' as shown in \cite{cv11} for $\sigma <1/2$ will also work to prove the property 
  of finite propagation for solutions
  to \eqref{e:main}. As the main result of this paper is H\"older regularity of solutions we will not make this presentation here.  
 \end{remark}

\section{Continuity: Method and Lemmas}  \label{s:lemmas}

 In this Section we outline the method used to prove H\"older regularity of solutions to \eqref{e:main}. We follow the method used in \cite{cfv13} which
 is an adaption of the ideas originally used by De Giorgi. We prove a decrease in oscillation on smaller cylinders and then 
 utilize the scaling property that if $u$ is a solution to \eqref{e:main}
 , then $v(t,x)=A(Bt,Cx)$ is also a solution to \eqref{e:main} if $A=B^{\alpha}C^{2-2\sigma}$. 
 Because of the degenerate nature of the problem the decrease in oscillation will only occur from above. Since we do not have a decrease in oscillation from 
 below we will need a Lemma that says in essence that if the solution $u$ is above $1/2$ on most of the space time, then $u$ is a distance from zero on 
 a smaller cylinder. To prove the Lemmas in this section we will use energy methods, and thus we will want to use as a test function $F(u)$ for some $F$. 
 If $u$ is a solution to \eqref{e:main}, then 
  \[
   u \in W^{(2-2\sigma)/(2+\g),2+\g}
  \]
 and it is not clear that $\nabla F(u)$ will be a valid test function. We therefore prove the Lemmas for the approximate problems 
  \begin{equation}  \label{e:1delaprox}
   \D_t^{\alpha} u - \delta \text{div}(D(u) \nabla u) - \text{div}(u\nabla (-\Delta)^{-\sigma} u) =f \text{ on } B_R,
  \end{equation}
 for some large $R>0$ and small $\delta>0$ with $u \equiv 0$ on $\partial B_R$. It is actually only necessary to prove the energy inequalities that 
 we will utilize with constants uniform as $\delta \to 0$ and $R \to \infty$.  
 We could also prove the Lemmas for the approximate problems \eqref{e:d2aprox}; however, for notational convenience and to make the proofs more
 transparent we have chosen to let $\e \to 0$. 
 Because our
 solution is a limit of discretized solutions we then are allowed to make the formal computations involved 
  with $\D_t^{\alpha} u$
 even though $u$ may not be regular enough for $\D_t^{\alpha} u$ to be defined. One simply proves the energy 
 inequalities (and hence the Lemmas) for the discretized solutions as was done in \cite{acv15}.

 %
 
 Because of the one-sided nature of our problem we prove the Lemmas for solutions to the equation with the modified term 
 div$(D(u)\nabla (-\Delta)^{-\sigma}u)$, where $D(u)=d_1u +d_2$. We assume $0\leq d_1, d_2 \leq 2$ and either $d_1=1$ or $d_2 \geq 1/2$. 
 As will be seen later, when $d_2 \geq 1/2$, the proofs are simpler because the problem is no longer
 degenerate.  
 We now define the exact class of solutions for which we prove the Lemmas of this section. 
 $u$ is a solution if $u \equiv 0$ on $\partial B_r$ and for every $\phi \in C_0^{\infty}((-\infty,T)\times B_R) $, we have 
  \begin{equation}  \label{e:2delaprox}
    \begin{aligned}
     &\int_{B_R} \int_{-\infty}^T \int_{-\infty}^t [u(t,x)-u(s,x)][\phi(t,x)-\phi(s,x)]K(t,s) \ ds \ dt \ dx \\
     &\quad + \int_{B_R} \int_{-\infty}^T \int_{-\infty}^{2t-T} u(t,x)\phi(t,x) K(t,s) \ ds \ dt \ dx \\
     &\quad -\int_{B_R} \int_{-\infty}^T u(t,x) \D_t^{\alpha} \phi(t,x) \ dt \ dx \\
     &\quad +\int_{-\infty}^T \int_{B_R} \nabla \phi(t,x) D(u)\nabla u(t,x) \ dx \ dt \\
     &\quad +\int_{-\infty}^T \int_{B_R} \nabla \phi(t,x) D(u) \nabla (-\Delta)^{-\sigma} u \ dx \ dt\\
     &= \int_{-\infty}^T \int_{B_R} f(t,x) \phi(t,x),
    \end{aligned} 
  \end{equation}
 By Lemmas \ref{l:compact} and \ref{l:triebel}, the Lemmas stated in this section will be true when $R \to \infty$ and $\delta \to 0$.

  Before stating the Lemmas we define the following function for small $0<\tau <1/4$. 
   \[
    \overline{\Psi}(x,t) := 1+ (|x|^{\tau} -2)_+ + (|t|^{\tau} -2)_+.
   \]
  We now state the Lemmas we will need.
  
  \begin{lemma}  \label{l:pullup}
   Let $u$ be a solution to \eqref{e:2delaprox} with $R>4$ and assume 
    \[
      1-\overline{\Psi}\leq u \leq \overline{\Psi} \text{  for  }  \tau < \tau_0
    \]
   Given $\mu_0 \in (0,1/2)$ and $\tau_0<1/4$, there exists $\kappa>0$
   depending on $\mu_0, \tau_0, \sigma, \alpha, n$ such that if 
    \[
      |\{u \geq 1/2\} \cap \Gamma_4| \geq (1-\kappa) |\Gamma_4|
    \]
   then $u \geq \mu_0$ on the smaller cylinder $\Gamma_1$. 
  \end{lemma}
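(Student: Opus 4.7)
The plan is to run a De Giorgi iteration that upgrades the measure-theoretic hypothesis into a pointwise lower bound on $\Gamma_1$. Introduce truncation levels $L_k := \mu_0 + 2^{-k}(\tfrac12 - \mu_0)$ and nested cylinders $Q_k := (-r_k, 0) \times B_{r_k}$ with $r_k$ decreasing from $4$ to $1$, together with smooth space-time cutoffs $\eta_k$ supported in $Q_{k-1}$ and identically $1$ on $Q_k$. Write $w_k := (L_k - u)_+$ for the defect below level $L_k$; showing $w_k \to 0$ on $\Gamma_1$ yields $u \ge \mu_0$ there. Set $A_k := |\{w_k > 0\} \cap Q_k|$; the target is a recursion $A_{k+1} \le C^k A_k^{1+\theta}$ which forces $A_k \to 0$ as soon as $A_0 \le \kappa$ is small enough.

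The engine is an energy inequality obtained by inserting a test function of the form $\phi = \eta_k^2 F(w_k)$, with $F$ convex satisfying the hypotheses of Lemma~\ref{l:discconvex} (for example $F(s) = s^2$), into the weak formulation \eqref{e:2delaprox}. As advertised in Section~\ref{s:lemmas}, I would perform this computation at the discretized level of Section~\ref{s:discretize} and then pass to the limit $\e \to 0$. The fractional time terms, after rewriting $u = L_k - w_k$ on $\{w_k > 0\}$ and invoking \eqref{e:convex} together with \eqref{e:posneg}, yield a coercive $H^{\alpha/2}$-type quantity in $w_k$ plus a ``memory'' contribution from $s < -r_k$ that records the past of $u$; this tail is controlled using Lemma~\ref{l:fracbound} and the pointwise lower envelope $u \ge 1 - \overline{\Psi}$, which prevents $w_k$ from being large outside $Q_1$. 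The local Darcy term $-\delta\,\mathrm{div}(D(u)\nabla u)$ gives $\delta \int \eta_k^2 D(u)|\nabla w_k|^2$, a small but nonnegative bonus.

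The genuinely hard term is the nonlocal spatial piece $\int \nabla(\eta_k^2 F(w_k))\cdot D(u)\nabla(-\Delta)^{-\sigma}u$. In the non-degenerate case $d_2 \ge 1/2$ one simply has $D(u) \ge 1/2$, and the Bessel bilinear form $\mathcal{B}_\zeta$ recalled in Section~\ref{s:existence} converts the term into a nonnegative double integral in $w_k$ plus tail errors controlled by $u \le \overline{\Psi}$. In the degenerate case $D(u) = u$, I would split $u = L_k - w_k$ on $\{w_k > 0\}$ and $u \ge L_k$ on the complement, so that the leading part with $D(u) \ge L_k - w_k$ produces, via $\mathcal{B}_\zeta$, a coercive nonlocal quadratic form in $w_k$; the subleading piece, of the shape $\int w_k \nabla(\eta_k^2 F(w_k))\cdot \nabla(-\Delta)^{-\sigma}u$, is lower order and absorbed by Young against the good quadratic term. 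The tail of $\nabla(-\Delta)^{-\sigma}u$ outside a large ball is bounded by the growth $|x|^\tau$ built into $\overline{\Psi}$; the restrictions $\tau < 1/4$ and $\sigma < 1/2$ ensure the resulting contribution is summable and does not overwhelm the coercive piece.

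Assembling these estimates yields, uniformly in $\delta$ and $R$, an inequality of the schematic form
\[
  \sup_{t \in (-r_k,0)} \int_{B_{r_k}} w_k^2\,dx \; + \; [w_k]^2_{H^{\alpha/2}_t} \; + \; [w_k]^2_{W^{\beta,2+\g}_x} \;\le\; C^k \int_{Q_{k-1}} w_{k-1}^2.
\]
Combining Lemma~\ref{l:ext} in time with standard fractional Sobolev embedding in space gains a higher integrability $L^{2+\theta}$ on $w_k$ over $Q_k$; Chebyshev, using $(L_k - L_{k+1})\mathbf{1}_{\{w_{k+1}>0\}} \le w_k$, then produces the nonlinear recursion $A_{k+1} \le C^k A_k^{1+\theta}$, and a standard iteration lemma closes the argument once $\kappa$ is chosen small enough in terms of $\mu_0, \tau_0, \sigma, \alpha, n$. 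The principal obstacle, as expected, is the degenerate subcase in the third paragraph: keeping a usable coercive nonlocal spatial quantity visible through the factor $D(u) = u$ that vanishes exactly where $w_k$ is largest, and absorbing the lower-order correction without losing uniformity in $\delta$ and $R$.
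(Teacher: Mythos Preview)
Your De Giorgi outline is on track in spirit, but the degenerate case $D(u)=u$ contains a genuine gap that your own last paragraph flags without closing. Splitting $u = L_k - w_k$ and writing $D(u) \ge L_k - w_k$ does \emph{not} yield a coercive nonlocal form: the factor $L_k - w_k = u$ vanishes on $\{u=0\}$, precisely where $w_k$ is largest, so no uniform lower bound survives to produce $\B(w_k,w_k)$. Moreover your ``subleading'' correction $\int w_k \nabla(\eta_k^2 F(w_k))\cdot \nabla(-\Delta)^{-\sigma}u$ is of the \emph{same} order as the $L_k$-piece wherever $u$ is small (there $w_k \approx L_k$), so Young's inequality cannot absorb it against a coercive term you have not yet secured.

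The paper's remedy is a test function tailored to the degeneracy: instead of $\eta_k^2 w_k^2$ one uses $-F'\bigl(u_{\phi_k}^-/(\phi_k+d)\bigr)=-\bigl[\bigl(\tfrac{u+d}{\phi_k+d}\bigr)^\g-1\bigr]_-$ with $F(x)=\tfrac{1}{\g+1}(1-x)^{\g+1}+x-\tfrac{1}{\g+1}$. The key algebraic step is
\[
\chi_{\{u<\phi\}}\,\nabla\Bigl[\Bigl(\tfrac{u+d}{\phi+d}\Bigr)^{\g}\Bigr](u+d)
=\tfrac{\g}{\g+1}\,\chi_{\{u<\phi\}}\,\nabla\Bigl[\tfrac{(u+d)^{\g+1}}{(\phi+d)^{\g}}-(\phi+d)\Bigr]
+\text{(gradient-of-}\phi\text{ terms)},
\]
which trades the degenerate coefficient $(u+d)$ for the nondegenerate $(\phi+d)$ and delivers a bilinear form $\B\bigl((u+d)^{\g+1}/(\phi+d)^\g,\,u_{\phi}^-\bigr)$. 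A separate pointwise comparison (Lemma~\ref{l:control}) then shows this dominates $\iint |u_{\phi}^-(x)-u_{\phi}^-(y)|^{2+\g}/|x-y|^{n+2-2\sigma}$ on the set $\{|u(x)-u(y)|\ge 4|\phi(x)-\phi(y)|\}$, and the cut-offs are built with the special property $|\nabla\theta_k|/\theta_k\le C^k\theta_k^{-1/m}$ so that the remaining error terms (including the $u_{\phi}^+$-tail) can be absorbed or bounded by $C^k\int\chi_{\{u<\phi_k\}}$. Your scheme is essentially what the paper does in the nondegenerate case $d_2\ge 1/2$, but for $D(u)=u$ you need this $\g$-power test function (or an equivalent device) to get any spatial energy at all.
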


  We have a similar Lemma from above 
   \begin{lemma}  \label{l:pulldown}
   Under the same assumptions as Lemma \ref{l:pullup}, given $\mu_1 \in (0,1/2)$ and $\tau_0<1/4$, there exists $\kappa>0$
   depending on $\mu_1, \tau_0, \sigma, \alpha, n$ such that if 
    \[
      |\{u > 1/2\} \cap \Gamma_2| \leq \kappa |\Gamma_2|
    \]
   then $u \leq 1-\mu_1$ on the smaller cylinder $\Gamma_1$. 
  \end{lemma}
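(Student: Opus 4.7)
The plan is a De~Giorgi energy iteration, which here is substantially simpler than for Lemma~\ref{l:pullup}: on the set $\{u>1/2\}$ the diffusion coefficient $D(u)$ in \eqref{e:2delaprox} satisfies $D(u)\geq 1/2$ (both when $d_2\geq 1/2$ and when $d_1=1$ with $u\geq 1/2$), so the equation is uniformly nondegenerate from above and the $u^{\g}$ machinery needed in the pull-up case is unnecessary. Set $k_j = (1-\mu_1) - \mu_1 2^{-j}$, so $k_0 = 1-2\mu_1\geq 1/2$ and $k_j\nearrow 1-\mu_1$. Let $r_j = 1+2^{-j}$, $\Gamma^j = (-r_j,0)\times B_{r_j}$, and $U_j = (u-k_j)_+$. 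Pick smooth space-time cutoffs $\eta_j\in C_0^\infty(\Gamma^j)$ with $\eta_j\equiv 1$ on $\Gamma^{j+1}$ and standard bounds $|\nabla\eta_j|\lesssim 2^j$ (and the analogous Caputo bound in time). Define
\[
a_j := \int_{\Gamma^j} U_j^2\,dx\,dt.
\]

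\textbf{Step 1 (Energy inequality).} Insert $\phi = \eta_j^2 U_j$ into \eqref{e:2delaprox}. For the time terms, apply \eqref{e:convex} with $F(s)=s^2/2$; the convexity lower bound produces a Gagliardo seminorm controlling $\|\eta_jU_j\|_{H^{\alpha/2}_tL^2_x}^2$ modulo cutoff errors absorbable into the right-hand side. For the spatial terms, the local piece contributes $\delta\int D(u)|\nabla(\eta_j U_j)|^2$ plus absorbable cross-terms, and the nonlocal piece, expanded via the bilinear form $\B_\zeta$ as in \cite{cfv13}, contributes a nonnegative quantity comparable to $\|\eta_jU_j\|_{L^2_tH^{1-\sigma}_x}^2$ since $D(u)\geq 1/2$ on $\{U_j>0\}$. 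The source term from $f$ is controlled by $\|f\|_\infty^2|\{U_j>0\}\cap\Gamma^j|$. Tail contributions from past values of $u$ in the Caputo kernel and from far spatial values in the Riesz kernel of $(-\Delta)^{-\sigma}$ are controlled universally using $u\leq\overline{\Psi}$ and feed into the right-hand side with constants depending only on $\tau_0,\alpha,\sigma,n$. The outcome is
\[
\|\eta_jU_j\|_{L^2_tH^{1-\sigma}_x}^2 + \|\eta_jU_j\|_{H^{\alpha/2}_tL^2_x}^2 \leq C\,4^j\,\bigl(a_j + |\{U_j>0\}\cap\Gamma^j|\bigr).
\]

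\textbf{Step 2 (Gain of integrability and iteration).} Combining the fractional time embedding of Lemma~\ref{l:ext} with the fractional spatial Sobolev embedding and Hölder interpolation gives $\|\eta_jU_j\|_{L^{2+\delta}(\Gamma^j)}\leq C\cdot(\text{energy norm})$ for some universal $\delta=\delta(n,\sigma,\alpha)>0$. Since $\{U_{j+1}>0\}\subset\{U_j\geq\mu_1 2^{-j-2}\}$, Chebyshev yields $|\{U_{j+1}>0\}\cap\Gamma^{j+1}|\leq C\,4^j\mu_1^{-2}a_j$. Applying Hölder to bound $a_{j+1}$ by $\|U_{j+1}\|_{L^{2+\delta}}^2$ times a power of this measure yields the classical De~Giorgi recursion
\[
a_{j+1}\leq C\,M^j\,a_j^{1+\delta/2}
\]
for some universal $M>1$. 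Hence there exists $\epsilon_0>0$ (depending on $\mu_1,\tau_0,\sigma,\alpha,n$) such that $a_0\leq\epsilon_0$ implies $a_j\to 0$, i.e.\ $u\leq 1-\mu_1$ on $\Gamma_1$. Smallness of $a_0$ follows from the hypothesis: on $\Gamma_2$ we have $U_0\leq\overline{\Psi}\leq C(\tau_0)$ and $U_0=0$ off $\{u>1/2\}$, so $a_0\leq C(\tau_0)^2\kappa|\Gamma_2|$; choosing $\kappa$ small enough yields $a_0\leq\epsilon_0$.

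\textbf{Main obstacle.} The delicate part is Step~1: verifying that every nonlocal tail contribution (both in time, via the Caputo kernel reaching back to $-\infty$ where $u$ grows like $|t|^\tau$, and in space, via the Riesz kernel feeling the $|x|^\tau$ growth at infinity) lands on the right-hand side with a universal $j$-independent constant, so as not to interfere with the geometric decay of $a_j$. The growth exponent $\tau_0<1/4$ ensures the requisite tail integrals converge. Because the truncation level $k_j\geq 1/2$ keeps us away from the degenerate regime $u=0$, the elaborate $u^{\g}$ test-function analysis needed for Lemma~\ref{l:pullup} is avoided, and once the tail bookkeeping is done the De~Giorgi iteration is standard.
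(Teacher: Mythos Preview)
Your approach is valid and takes a genuinely different route from the paper's. The paper reuses the nonlinear test-function machinery of Lemma~\ref{l:pullup}: it sets $F(x)=\frac{1}{\g+1}(1+x)^{\g+1}-x+\frac{1}{\g+1}$, tests with $F'(u_{\phi_k}^+/\phi_k)=((u/\phi_k)^{\g}-1)_+$ against barriers $\phi_k=\overline{\Psi}-\xi_0(t)\theta_k(x/2)$ that carry the $\overline{\Psi}$ tail (so $(u-\phi_k)_+$ is automatically compactly supported), and invokes the analogue Lemma~\ref{l:pulldest} of Lemma~\ref{l:control}. Your observation that $D(u)\geq 1/2$ on $\{u>k_j\}$ (in both regimes $d_1=1$ and $d_2\geq 1/2$) lets you dispense with the $u^{\g}$ algebra and run a linear De~Giorgi iteration with constant truncation levels instead; this is conceptually simpler. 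What the paper's choice buys is uniformity of presentation with the pull-up proof and automatic localization via the tails of $\phi_k$, whereas your constant levels leave $(u-k_j)_+$ globally supported and push all the tail bookkeeping onto the multiplicative cutoff $\eta_j$.

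One point to sharpen: the claim that the nonlocal spatial piece ``contributes a nonnegative quantity comparable to $\|\eta_jU_j\|_{L^2_tH^{1-\sigma}_x}^2$ since $D(u)\geq 1/2$ on $\{U_j>0\}$'' is not literally correct, because $D(u)$ does not factor out of the bilinear form $\B$. What is true is that on the support of $\nabla(\eta_j^2U_j)$ one has $D(u)=d_1(U_j+k_j)+d_2$, so $D(u)\nabla(\eta_j^2U_j)$ can be rewritten (up to cutoff errors) as the gradient of an explicit function of $U_j$ and then fed into $\B$; this is precisely the manipulation carried out in the $d_2\geq 1/2$ portion of the proof of Lemma~\ref{l:pullup}, and it goes through here with $k_j\geq 1/2$ playing the role of the uniform lower bound. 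A minor correction: your levels $k_j=(1-\mu_1)-\mu_1 2^{-j}$ give $k_0=1-2\mu_1$, which is $\geq 1/2$ only for $\mu_1\leq 1/4$; for general $\mu_1\in(0,1/2)$ take instead $k_j=\tfrac12+(\tfrac12-\mu_1)(1-2^{-j})$, so that $k_0=\tfrac12$ matches the hypothesis directly and $k_j\nearrow 1-\mu_1$.
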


  Lemma \ref{l:pulldown} is not sufficient. We need the stronger
   \begin{lemma}  \label{l:2down}
   Under the same assumptions as Lemma \ref{l:pullup}, assume further for fixed $k_0$
    \begin{equation} \label{e:2down}
     |\{u<1/2\}\cap \Gamma_4| \geq (1-\kappa_0)|\Gamma_4|,
    \end{equation}
   then $u \leq 1-\mu_2$ on $\Gamma_1$ for some $\mu_2$ depending on $\kappa_0$. 
  \end{lemma}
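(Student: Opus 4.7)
The strategy is a second De~Giorgi-type lemma, where the improvement is driven by two distinct nonlocal penalty terms rather than by the local energy. I would test the weak formulation \eqref{e:2delaprox} against $\phi=(u-L)_+\eta^2$, with $L=1-\mu_2$ and $\eta$ a smooth space-time cutoff supported in $\Gamma_2$ with $\eta\equiv 1$ on $\Gamma_1$. Combining \eqref{e:crhs} with the convexity inequality \eqref{e:convex} applied to $F(r)=r^2$, the Caputo-type time term produces the usual $H^{\alpha/2}$-dissipation for $(u-L)_+$ plus, crucially, a nonlocal \emph{memory penalty}
\[
c\int\!\int\!\int \frac{(u-L)_+(t,x)\,(L-u(s,x))_+}{(t-s)^{1+\alpha}}\,\eta^2(t,x)\,ds\,dt\,dx,
\]
arising from the cross term $[u(t)-u(s)][(u-L)_+(t)-(u-L)_+(s)]$ on $\{u(t)>L>u(s)\}$. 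In parallel, rewriting the spatial term via the identity $\int \nabla v\cdot \nabla(-\Delta)^{-\sigma}u=\B_{(-\Delta)^{-\sigma}}(v,u)$ as in the existence section produces both a positive $H^{1-\sigma}$-type dissipation for $(u-L)_+$ and an analogous \emph{spatial penalty}
\[
c\int\!\int\!\int \frac{(u-L)_+(t,x)\,(L-u(t,y))_+}{|x-y|^{n+2(1-\sigma)}}\,D(u(t,x))\,\eta^2(t,x)\,dy\,dx\,dt.
\]
These are the two nonlocal terms not present in the classical porous medium equation that are alluded to in the introduction.

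Next I would convert \eqref{e:2down} into pointwise lower bounds on these penalties. By Fubini, the hypothesis $|\{u<1/2\}\cap\Gamma_4|\ge(1-\kappa_0)|\Gamma_4|$ implies that either for a fraction $\ge c(\kappa_0)$ of points $(t,x)\in\Gamma_1$ one has $|\{s\in(-4,t):u(s,x)<1/2\}|\ge c(\kappa_0)$, or for a similar fraction of points one has $|\{y\in B_4:u(t,y)<1/2\}|\ge c(\kappa_0)$. On such points, the kernels $(t-s)^{-1-\alpha}$ and $|x-y|^{-n-2(1-\sigma)}$ are bounded below uniformly on $\Gamma_4$, so at least one of the two penalties is bounded below by $c(\kappa_0)(L-\tfrac12)\int_{\Gamma_1}(u-L)_+\ge\tfrac14 c(\kappa_0)\int_{\Gamma_1}(u-L)_+$, with constant independent of $\mu_2$. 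On the right-hand side, the contributions from $f$, from the tail growth encoded by $\overline{\Psi}$, and from the cutoff errors (via $\D_t^\alpha\eta$, $\nabla\eta$, and Lemmas \ref{l:fracbound}, \ref{l:dfracbound}) are all $O(\mu_2\cdot|\{u>L\}\cap\Gamma_2|)$, since $(u-L)_+\le\mu_2$ on $\{u\le\overline{\Psi}\}\cap\Gamma_2$. Choosing $\mu_2=\mu_2(\kappa_0)$ small enough that $\tfrac14c(\kappa_0)>C\mu_2$ forces $(u-L)_+\equiv 0$ on $\Gamma_1$, giving the conclusion.

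The main obstacle will be the degenerate weight $D(u)=d_1u+d_2$ in the spatial term when $d_2=0$: the $H^{1-\sigma}$-type dissipation degenerates where $u$ vanishes, but since we evaluate the spatial penalty at the current point $(t,x)\in\{u>L\}$ where $D(u(t,x))\approx 1-\mu_2$, the spatial penalty is still active. Still, one must arrange the argument so that whichever of the two penalty mechanisms is quantitatively strong (determined by whether the Fubini alternative favours past times or far-field space) suffices on its own; i.e., the proof must not require both penalties to cooperate simultaneously. A secondary technical issue is compatibility of the test function $(u-L)_+$ with the tail growth of $\overline{\Psi}$: because $u$ is only bounded above by $\overline{\Psi}$ and not by $1$, one must replace $(u-L)_+$ by $(u-L\overline{\Psi})_+$ (or truncate $u$ at height $2$ before testing), at the cost of additional tail terms that are small because $\tau<\tau_0<1/4$ and $(L-u(s,x))_+$ is only meaningful where $u(s,x)<1/2<L$, which confines the relevant integration to a bounded region.
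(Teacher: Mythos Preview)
Your identification of the two nonlocal penalty terms (the time ``memory'' cross term and the spatial cross term) is exactly right, and these are indeed the mechanism the paper exploits. However, the one-shot argument you propose has a genuine gap: the inequality you extract,
\[
c(\kappa_0)\int_{\Gamma_1}(u-L)_+ \;\leq\; C\mu_2\,|\{u>L\}\cap\Gamma_2|,
\]
cannot force $(u-L)_+\equiv0$. The point is that $\int(u-L)_+$ and $\mu_2|\{u>L\}|$ do not scale the same way in the height of the excursion: if $u=L+\varepsilon$ on a set of positive measure with $\varepsilon\ll\mu_2$, the left side is $c(\kappa_0)\varepsilon\cdot|\{u>L\}|$ while the right side is $C\mu_2|\{u>L\}|$, and the inequality is trivially satisfied for small $\varepsilon$. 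Shrinking $\mu_2$ does not help, because both sides carry the same factor $|\{u>L\}|$ and the $\varepsilon$ on the left can always undercut. In other words, the penalty term and the cutoff/error terms are homogeneous of the same order in $(u-L)_+$, so a direct comparison yields no information. (Your Fubini alternative is also not quite what the hypothesis gives: the good set $\{u<1/2\}$ need not intersect every time slice through $x$ nor every space slice at $t$ for points $(t,x)$ in the support of $(u-L)_+$.)

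The paper resolves this by a two-stage argument in Section~\ref{s:oscillation}. First (Lemma~\ref{l:decintime}), it introduces a family of cutoffs $\phi_i=1+\psi_{\lambda^3}+\lambda^iF$, $i=0,\dots,4$, whose Lipschitz constants are of order $\lambda$; this makes the error terms in the energy inequality of order $\lambda^{(2+\gamma)/(1+\gamma)}$, which for small $\gamma$ is strictly smaller than the natural size $\lambda^{1+\gamma}$ of the penalty. The penalty inequality then becomes a \emph{measure-shrinking} statement: from $|\{u<\phi_0\}|\gtrsim\rho$ one deduces, via Chebyshev across several levels, that $|\{u>\phi_4\}|\leq\kappa$. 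Second (Lemma~\ref{l:oscdec}), one rescales $w=(u-(1-\lambda^4))/\lambda^4$ and feeds this smallness of measure into the De~Giorgi iteration already established in Lemma~\ref{l:pulldown}, which is what actually produces the pointwise bound. So the penalties are used to reach the threshold of a separate iteration lemma, not to close the argument in one stroke.
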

  We will choose  $\kappa_0$ to equal the $\kappa$ in Lemma \ref{l:pullup}.

\section{Pull-up}   \label{s:pullup}
 In this section we provide the proof of Lemma \ref{l:pullup}. This Lemma is the most technical to prove. We first prove the Lemma in
 the most difficult case when $D(u)=u+d$ with $0\leq d \leq 2$. Afterwards, we show how the proof is much simpler when $D(u)=d_1 u +d_2$ with 
 $d_2 \geq 1/2$ and $0\leq d_1 \leq 2$.

 We will need the following technical Lemma. The proof is found in the appendix.  
  \begin{lemma}  \label{l:control}
   Let $u,\phi$ be two functions such that $0\leq u \leq \phi \leq 1$. Let $0<\gamma<1$ be a constant. 
   If $|u(x)-u(y)|\geq 4|\phi(x)-\phi(y)|$, then 
    \begin{equation} \label{e:control1}
     \frac{2}{5}\left(\frac{4}{5} \right)^{\g} |u_{\phi}^-(x)-u_{\phi}^-(y)|^{1+\g} 
      \leq \left| \frac{u^{\g+1}(y)}{\phi^{\g}(y)} - \frac{u^{\g+1}(x)}{\phi^{\g}(x)} \right|
      \leq \frac{14}{3} |u_{\phi}^-(x)-u_{\phi}^-(y)|.
    \end{equation}
   Also, if  
    \[
     0 \leq  \frac{u^{\g+1}(y)}{\phi^{\g}(y)} - \frac{u^{\g+1}(x)}{\phi^{\g}(x)} 
    \] 
   then $0\leq u_{\phi}^-(x)-u_{\phi}^-(y)$. 
   
   If instead we assume $|u(x)-u(y)|\leq 4|\phi(x)-\phi(y)|$, then 
    \begin{equation} \label{e:control2}
      \left| \frac{u^{\g+1}(y)}{\phi^{\g}(y)} - \frac{u^{\g+1}(x)}{\phi^{\g}(x)} \right|
      \leq 14 |\phi(x)-\phi(y)|.
    \end{equation}
  \end{lemma}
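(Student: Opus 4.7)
The plan is to parameterize along the straight line between $(u(x),\phi(x))$ and $(u(y),\phi(y))$. I set
\[
u_t:=(1-t)u(x)+tu(y),\quad \phi_t:=(1-t)\phi(x)+t\phi(y),\quad \Psi(t):=u_t^{\gamma+1}/\phi_t^{\gamma},
\]
so that by convex combination $u_t\leq\phi_t\leq 1$ along the whole path, and the quantity to be estimated equals $\Psi(1)-\Psi(0)$. Writing $\Delta u:=u(y)-u(x)$ and $\Delta\phi:=\phi(y)-\phi(x)$ (so that $u_{\phi}^-(x)-u_{\phi}^-(y)=\Delta u-\Delta\phi$), a direct computation yields
\[
\Psi'(t) \;=\; (\gamma+1)\bigl(u_t/\phi_t\bigr)^{\gamma}\,\Delta u \;-\; \gamma\bigl(u_t/\phi_t\bigr)^{\gamma+1}\,\Delta\phi.
\]
The key leverage is that the ratio $u_t/\phi_t$ stays in $[0,1]$, so the right-hand side is easy to estimate. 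By symmetry in $x,y$ I also assume $\Delta u\geq 0$.

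Under the first hypothesis $|\Delta u|\geq 4|\Delta\phi|$, the constraint $|\Delta\phi|\leq \Delta u/4$ lets me absorb the second term of $\Psi'$ into the first and gives the sandwich
\[
\Delta u\,(u_t/\phi_t)^{\gamma}\;\leq\;\Psi'(t)\;\leq\;\tfrac{9}{4}\Delta u.
\]
Integrating the upper bound and combining with $\Delta u\leq (4/3)|u_{\phi}^-(x)-u_{\phi}^-(y)|$ (which follows from $|\Delta u-\Delta\phi|\geq (3/4)\Delta u$) delivers the upper inequality in \eqref{e:control1}. For the lower inequality, using $u_t\geq t\,\Delta u$ and $\phi_t\leq 1$ produces $\int_0^1(u_t/\phi_t)^{\gamma}\,dt\geq (\Delta u)^{\gamma}/(\gamma+1)$, so $\Psi(1)-\Psi(0)\geq (\Delta u)^{\gamma+1}/(\gamma+1)$. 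Pairing this with $|u_{\phi}^-(x)-u_{\phi}^-(y)|\leq (5/4)\Delta u$ reduces the target to the elementary inequality $(4/5)^{\gamma+1}/(\gamma+1)\geq (2/5)(4/5)^{\gamma}$, which is equivalent to $\gamma\leq 1$ and hence automatic. The sign claim comes for free: the lower bound shows $\Psi(1)-\Psi(0)$ carries the strict sign of $\Delta u$, while $\Delta u-\Delta\phi$ carries that same sign because $|\Delta\phi|\leq |\Delta u|/4$; in particular, if $\Psi(1)-\Psi(0)\geq 0$ then $\Delta u\geq 0$ and hence $u_{\phi}^-(x)-u_{\phi}^-(y)\geq 0$.

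Under the second hypothesis $|\Delta u|\leq 4|\Delta\phi|$ there is no lower bound to prove; the plan is simply to estimate $|\Psi'(t)|\leq (\gamma+1)|\Delta u|+\gamma|\Delta\phi|\leq [4(\gamma+1)+\gamma]|\Delta\phi|\leq 9|\Delta\phi|\leq 14|\Delta\phi|$ and integrate to obtain \eqref{e:control2}. The only technical subtlety throughout is the degenerate corner $\phi(x)=0$ (which forces $u(x)=0$), where the ratio $u_t/\phi_t$ is formally $0/0$ at $t=0$. I would handle this by setting $\Psi(0):=0$, justified since $\Psi(t)\leq u_t\to 0$ as $t\to 0^+$, and by noting that the upper bounds on $\Psi'$ above remain uniformly integrable on $(0,1]$; equivalently one can regularize $\phi\mapsto\phi+\varepsilon$ and send $\varepsilon\downarrow 0$. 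The main obstacle is therefore extracting the sharp constant $(2/5)(4/5)^{\gamma}$ in the lower bound of \eqref{e:control1}, which is the unique place where the hypothesis $\gamma<1$ is essential; the other constants ($14/3$ and $14$) have comfortable slack in this approach.
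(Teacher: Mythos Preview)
Your argument is correct and is genuinely different from the paper's. The paper proceeds by ad hoc algebraic reductions: for the upper bounds it maximizes the difference by pushing $u(y)$ up to $\phi(y)$ and then analyzes the one-variable quantity $l-(l-\upsilon)^{1+\gamma}/(l+\mu)^{\gamma}$ through a case split; for the lower bound it \emph{restricts} to $\gamma=1/k$, writes
\[
\frac{u^{\gamma+1}(y)-u^{\gamma+1}(x)}{\phi^{\gamma}(y)}-u^{\gamma+1}(x)\frac{\phi^{\gamma}(x)-\phi^{\gamma}(y)}{\phi^{\gamma}(x)\phi^{\gamma}(y)}=:M_1+M_2,
\]
factors the numerators via $a^{k+1}-b^{k+1}=(a-b)\sum a^{k-j}b^{j}$, and shows $|M_2|\le|M_1|/4$ so that the convex lower bound $M_1\ge (u(y)-u(x))^{1+\gamma}/\phi^{\gamma}(y)$ survives.

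Your parameterization $\Psi(t)=u_t^{\gamma+1}/\phi_t^{\gamma}$ along the segment is cleaner: the single identity
\[
\Psi'(t)=(\gamma+1)\bigl(u_t/\phi_t\bigr)^{\gamma}\Delta u-\gamma\bigl(u_t/\phi_t\bigr)^{\gamma+1}\Delta\phi
\]
together with $0\le u_t/\phi_t\le 1$ handles all three inequalities uniformly, delivers the sign statement for free, and works for every $\gamma\in(0,1)$ rather than only $\gamma=1/k$. The paper's approach, by contrast, makes the role of the constraint $u\le\phi$ more explicit at each step and avoids any discussion of the degenerate endpoint $\phi=0$, which you have to treat separately (your limiting/regularization argument there is fine). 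In short, both routes reach the same constants, but yours is shorter and slightly more general; the paper's is more hands-on but imposes the harmless auxiliary assumption $\gamma=1/k$.
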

  
 \begin{remark}  \label{r:2}
  When $0\leq u \leq \phi \leq 3$,  Lemma \ref{l:control} will hold with new constants by applying the Lemma to $u/3,\phi/3 $.  
 \end{remark}

 We will use a sequence of cut-off functions $\{\phi_k\}$ which will be chosen to be smooth cut-off functions in space, 
 and  smooth increasing cut-off functions
 in time. 
  We recall that for small $0<\tau <1/2$, 
   \[
    \overline{\Psi}(x,t) := 1+ (|x|^{\tau} -2)_+ + (|t|^{\tau} -2)_+.
   \]
 We now recall the construction of a sequence of smooth radial cut-offs $\theta_k$ from \cite{cfv13} that satisfy 
 \begin{itemize}
  \item $\theta_k(x)\leq \theta_{k-1}(x)\leq \ldots \leq \theta_0(x) $,
  \item $|\nabla \theta_k|/\theta_k \leq C^k \theta_k^{-1/m}$, with $m \geq 2$
  \item $\theta_{k-1}-\theta_k\geq (1-\mu_0)2^{-k}$ in the support of $\theta_k$,
  \item $\theta_k \to \mu_0 \chi_{B_2}$ as $k \to \infty$,
  \item the support of $\theta_k$ is contained in the set where $\theta_{k-1}$ achieves its maximum.  
 \end{itemize}
 We also have 
  $\theta_0 \equiv 1$ on $B_3$  and the support of $\theta_0$ is contained in $B_4$. 
 As a cut-off in time we consider a sequence $\{\xi_k\}$ satisfying 
  \begin{itemize}
  \item $\xi_k(t) \leq \xi_{k-1}(t)$,
  \item $\xi_k'(t) \leq C^k $,
  \item $\xi_k \to \chi_{\{t>-2\}}$ as $k \to \infty$,
  \item $\xi_k =\max \xi_k = 1$ on the interval $[-2-2^{-k},0]$. 
  \item the support of $\xi_k$ is contained in the set where $\xi_{k-1}$ achieves its maximum.  
 \end{itemize}
 We now define 
  \[
   \phi_k(x,t) := 1-\overline{\Psi}(x,t) + \frac{1}{2}\xi_k(t)\theta_k(x). 
  \]
 We use the convention for negative part that $u=u_+ - u_-$. We also write $u_{\phi_k}^- := (u-\phi_k)_-$.
 We now consider the convex function
  \begin{equation}  \label{e:ftest}
   F(x):= \frac{1}{\g + 1}(1-x)^{\g +1} +x -\frac{1}{\g +1}.
  \end{equation}
 Because of the degenerate nature of our equation we will want to utilize the test function
  \begin{equation} \label{e:test}
    -F'(u_{\phi}^-/(\phi+d))=  \left(1- \frac{(u-\phi)_-}{\phi+d} \right)^{\g} -1 
     = -\left[\left(\frac{u+d}{\phi+d}\right)^{\g}-1\right]_-.
   \end{equation}

 \begin{proof}[Proof of Lemma \ref{l:pullup}]
 
 \textbf{First Step: Obtaining an energy in time.}
 We note that for $0\leq x \leq 1$, $F(x)$ is convex,$F'(x)\geq 0$, and $F''(x)\geq \g$.  From the convexity and 
 second derivative estimate we also conclude for $0\leq x,y\leq 1$
  \begin{gather}   \label{e:convexity}
     F'(x)(x-y) \geq F(x)-F(y) + (\g/2) (x-y)^2 \\
     \label{e:xsquared}F(x) \approx  x^2
  \end{gather}
 We now consider $- F'(u_{\phi_k}^-/(\phi_k+d)) \D_t^{\alpha} u$, and rewrite $u = u_{\phi_k}^+ - u_{\phi_k}^- + \phi_k$. 
 To obtain an energy in time we first consider 
  \[
   \begin{aligned}
    &\int_{-\infty}^0 F' \left(\frac{u_{\phi}^-(t)}{\phi(t)+d} \right) \D_t^{\alpha} u_{\phi}^- (t) \\
    &= \int_{-\infty}^0 \int_{-\infty}^0 F' \left(\frac{u_{\phi}^-(t)}{\phi(t)+d} \right) \left[u_{\phi}^-(t)-u_{\phi}^-(s)\right] K(t,s) \\
    &=  \int_{-\infty}^0 \int_{-\infty}^0(\phi(t)+d) F' \left(\frac{u_{\phi}^-(t) }{\phi(t)+d} \right) 
            \frac{u_{\phi}^-(t)-u_{\phi}^-(s)}{\phi(t)+d} K(t,s) \\
    &\geq \int_{-\infty}^0 \int_{-\infty}^0 (\phi(t)+d)F' \left(\frac{u_{\phi}^-(t)}{\phi(t)+d} \right) 
      \left[ \frac{u_{\phi}^-(t)}{\phi(t)+d}-\frac{u_{\phi}^-(s)}{\phi(s)+d} \right]K(t,s) \\
    &\geq  \int_{-\infty}^0 \int_{-\infty}^0 (\phi(t)+d)\left[F\left(\frac{u_{\phi}^-(t)}{\phi(t)+d} \right) 
                            - F\left(\frac{u_{\phi}^-(s)}{\phi(s)+d} \right)\right] K(t,s) \\
    & \  + \int_{-\infty}^0 \int_{-\infty}^0 (\phi(t)+d) 
       \frac{\gamma}{2} \left[\frac{u_{\phi}^- (t)}{\phi(t)+d} - \frac{u_{\phi}^-(s)}{\phi(s)+d} \right]^2 K(t,s)\\
    &\geq \int_{-\infty}^0 \int_{-\infty}^0 (\phi(t)+d)\left[F\left(\frac{u_{\phi}^-(t)}{\phi(t)+d} \right) 
                            - F\left(\frac{u_{\phi}^-(s)}{\phi(s)+d} \right)\right] K(t,s) \\
    & \ + \int_{-2-2^{-k}}^0 \int_{-2-2^{-k}}^0  
       \frac{\gamma}{2} \left[u_{\phi}^- (t) - u_{\phi}^-(s)\right]^2 K(t,s)\\
    &= (1)+(2).   
   \end{aligned}
  \]
 In the first inequality we used that $\phi$ is increasing in $t$ and positive for $t\geq -4$ as well as $u_{\phi}^-(s)=0$ for $s\leq -4$, 
 and in the second inequality we used \eqref{e:convexity}. Term $(2)$ is half of what we will need for the Sobolev embedding 
 (see Lemma \ref{l:ext}).  To gain
 the other half we consider term $(1)$. 
 For $c,C^k$ depending on $\Lambda,\alpha$ and the Lipschitz constant
 of $\phi_k$ we have
  \[
   \begin{aligned}
    & \int_{-\infty}^0 \int_{-\infty}^0(\phi_k(t)+d)\left[F\left(\frac{u_{\phi_k}^-(t)}{\phi_k(t)+d} \right) 
          - F\left(\frac{u_{\phi_k}^-(s)}{\phi_k(s)+d} \right)\right] K(t,s) \\
     &= \int_{-\infty}^0 \int_{-\infty}^0 \left[(\phi_k(t)+d)F\left(\frac{u_{\phi_k}^-(t)}{\phi_k(t)+d} \right) 
          - (\phi_k(s)+d)F\left(\frac{u_{\phi_k}^-(s)}{\phi_k(s)+d} \right)\right] K(t,s) \\
     &\quad + \int_{-\infty}^0 \int_{-\infty}^0 [\phi_k(s)-\phi_k(t)]F\left(\frac{u_{\phi_k}^-(s)}{\phi_k(t)+d} \right) K(t,s) \\
     &\geq c \int_{-\infty}^0 (\phi(t)+d) F\left(\frac{u_{\phi_k}^-(t)}{\phi_k(t)+d} \right) \frac{1}{(0-t)^{\alpha}} \\
     &\quad - C^k \int_{-\infty}^0 \chi_{\{u(t)<\phi_k(t)\}}  \ dt \\
     &\geq  c \int_{-2-2^{-k}}^0 \frac{(u_{\phi_k}^-)^2(t)}{(\phi_k(t)+d)(0-t)^{\alpha}} \\
     &\quad - C^k \int_{-\infty}^T \chi_{\{u(t)<\phi_k(t)\}}  \ dt \\
   \end{aligned}
  \] 
 The last inequality coming from \eqref{e:xsquared}.
  
 Now 
  \[
   -\int_{-\infty}^0 F' \left(\frac{u_{\phi}^-(t)}{\phi(t)+d} \right) \D_t^{\alpha} u_{\phi}^+ (t) \geq 0,
  \]
 and in this proof we ignore this term which will be on the left hand side. Now for the term involving $\phi_k$ we have 
  \[
   -\int_{-\infty}^0 F' \left(\frac{u_{\phi_k}^-(t)}{\phi_k(t)+d} \right) \D_t^{\alpha} \phi_k (t) 
    \geq -C^k \int_{-\infty}^0 \chi_{\{u<\phi_k\}}.
  \]
 Then utilizing the embedding theorem for fractional Sobolev spaces \cite{dpv12} 
 combined with the above inequalities we obtain 
  \begin{equation}  \label{e:timeenergy}
   \begin{aligned}
   \int_{-\infty}^0 F'\left(\frac{u_{\phi_k}^-}{\phi_k+d} \right) \D_t^{\alpha}u(t)  
   &\geq c \int_{-2-2^{-k}}^0 \frac{(u_{\phi_k}^-)^2(t)}{(0-t)^{\alpha}} \\
   & \quad + c \int_{-2-2^{-k}}^0 \int_{-2-2^{-k}}^t  \frac{\left[u_{\phi_k}^-(t) - u_{\phi_k}^-(s)\right]^2}{(t-s)^{1+\alpha}} \\
   & \quad - C^k \int_{-\infty}^0 \chi_{\{u<\phi_k\}} \ dt \\
   &\geq c \left( \int_{-2-2^{-k}}^0 \left(u_{\phi_k}^-\right)^{\frac{2}{1-\alpha}}\right)^{1-\alpha} - C^k \int_{-\infty}^0 \chi_{\{u<\phi_k\}} \ dt.
   \end{aligned}
  \end{equation}
 

 After integrating in the spatial variable we have
  \[
   \begin{aligned}
    &c\int_{\R^n} \left( \int_{-2-2^{-k}}^T (u-\phi_k)_-^{\frac{2}{1-\alpha}}\right)^{1-\alpha}
     - \int_{-\infty}^0 \int_{\R^n} (u+d) \nabla F'\left(\frac{u_{\phi_k}^-}{\phi_k+d} \right) \nabla (-\Delta)^{-\sigma} u \\
    &\quad - \int_{-\infty}^0 \int_{\R^n} (u+d) \nabla F'\left(\frac{u_{\phi_k}^-}{\phi_k+d} \right) \nabla  u \\
    &\leq  -\int_{-\infty}^0 \int_{\R^n} f F'\left(\frac{u_{\phi_k}^-}{\phi_k+d} \right) + C^k\int_{\R^n} \int_{-\infty}^0 \chi_{\{u<\phi_k\}} \ dt \ dx \\
    &\leq C^k\int_{-\infty}^0 \int_{\R^n}  \chi_{\{u<\phi_k\}} \ dt \ dx \\
   \end{aligned}
  \]

 \textbf{Second Step: Obtaining an energy in space.}
 We now turn our attention to the elliptic portion of the problem. 
 We recall from \cite{cfv13} the identity 
  \[
   \B(v,w) = \int_{\R^n}\int_{\R^n} \frac{[v(x)-v(y)][w(x)-w(y)]}{|x-y|^{n+2-2\sigma}} dx dy = c_{n,\sigma} 
     \int_{\R^n} \nabla v \nabla (-\Delta)^{-\sigma}u . 
  \] 
 We multiply by our test function \eqref{e:test} and integrate by parts. On the left hand side of the equation we have
  \[
   \begin{aligned}
    &\chi_{\{u<\phi\}}\nabla\left[\left(\frac{u+d}{\phi+d}\right)^{\g} -1\right] (u+d) \nabla (-\Delta)^{-\sigma}u \\
    &= \chi_{\{u<\phi\}}\g \left(\frac{u+d}{\phi+d}\right)^{\g-1} \nabla((u+d)/(\phi+d)) (u+d) \nabla (-\Delta)^{-\sigma} u \\
    &= \chi_{\{u<\phi\}}\frac{\g}{\g+1} \nabla \left[\left(\frac{u+d}{\phi+d}\right)^{\g+1} -1\right](\phi+d) \nabla (-\Delta)^{-\sigma} u \\
    &= \chi_{\{u<\phi\}}\frac{\g}{\g+1} \nabla \left[\frac{(u+d)^{\g+1}}{(\phi+d)^{\g}}-(\phi+d) \right]  \nabla (-\Delta)^{-\sigma} u \\
    &\qquad   -\frac{\g}{\g+1} \chi_{\{u<\phi\}} \left[\left({\frac{u+d}{\phi+d}}\right)^{\g+1}-1\right]\nabla \phi  \nabla (-\Delta)^{-\sigma} u\\
    &:= (1)+(2).   
   \end{aligned}
  \]
 We now focus on $(1)$ which will give us the energy term we need. 
 For the term $(-\Delta)^{-\sigma} u$,
 we rewrite $u=(u-\phi_k)_+ - (u-\phi_k)_- + \phi_k := u_{\phi_k}^+ - u_{\phi_k}^- + \phi_k$. 
 Then we rewrite $(1)=(1a)+(1b)+(1c)$.
 We focus on the term $(1b)$. 
 We rewrite 
  \[
   \begin{aligned}
   (1b)&=(1bi)+(1bii) \\
       &:= -\chi_{\{u<\phi\}}\frac{\g}{\g+1} \nabla \left[\frac{(u+d)^{\g+1}}{(\phi+d)^{\g}} \right]  \nabla (-\Delta)^{-\sigma} u_{\phi}^- \\
       &\quad + \chi_{\{u<\phi\}}\frac{\g}{\g+1} \nabla \phi \nabla (-\Delta)^{-\sigma} u_{\phi}^-.
   \end{aligned}
  \] 
 The term $(1bi)$ will give us the energy term in space that we will need. 
  \[
   \begin{aligned}
   (1bi)&=\int_{\R^n}\int_{\R^n} \frac{\g}{\g+1} 
   \nabla \left[\frac{(u+d)^{\g+1}}{(\phi+d)^{\g}}(x) \right] \frac{1}{|x-y|^{n-2\sigma}} \nabla u_{\phi}^-(y) dx dy \\
    &= c_{n,\sigma}\frac{\g}{\g+1} \B(\chi_{\{u<\phi\}}(u+d)^{\g+1} / (\phi+d)^{\g}, -u_{\phi}^-). 
   \end{aligned}
  \]
  We define the set 
  \[
   A_k:= \{|u(x)-u(y)| \geq 4 |\phi_k(x)-\phi_k(y)|\}.
  \]
 It is clear that $A_k$ contains the set $V_k \times V_k$ where we define $V_k$ as the set on 
 which $\theta_k$ achieves its maximum. 
 From Lemma \ref{l:control} and Remark \ref{r:2} we have 
  \[
   \iint\limits_{A_k} \left[ \frac{(u+d)^{\g+1}(y)}{(\phi+d)^{\g}(y)} - \frac{(u+d)^{\g+1}(x)}{(\phi+d)^{\g}(x)}\right]
    \frac{[u_{\phi_k}^-(x)-u_{\phi_k}^-(y)]}{|x-y|^{n+2-2\sigma}} 
   \geq c \iint\limits_{A_k} \frac{|u_{\phi_k}^-(x)-u_{\phi_k}^-(y)|^{2+\g}}{|x-y|^{n+2-2\sigma}}
  \]
 We now label $U_k$ as the set where $\phi_k$ achieves its maximum. Notice that $U_k = [-2-2^{-K}, 0] \times V_k$. 
 To utilize the fractional Sobolev embedding on $V_k \times V_k$, we also will need an $L^p$ norm of $u_{\phi_k}^-$ on $V_k$. We utilize half of the 
 integral of $u_{\phi_k}^-$ that we gained from the fractional time term:
  \[
   \begin{aligned}
   & \int_{\R^n} \int_{-2-2^{-k}}^0 \frac{(u_{\phi_k}^-)^2(t)}{(0-t)^{\alpha}} \\
    &\geq \frac{1}{2} \iint\limits_{U_k} \frac{(u_{\phi_k}^-)^2(t)}{(0-t)^{\alpha}} 
     + \frac{1}{2} \iint\limits_{U_k} \frac{(u_{\phi_k}^-)^{2+\g}(t)}{(0-t)^{\alpha}}.
   \end{aligned}
  \]
 The inequality comes from the fact that $0\leq u_{\phi}^-\leq 1$.  
 Now from the fractional sobolev embedding \cite{dpv12},
  \begin{equation}  \label{e:leftene}
   \begin{aligned}
    & \int_{-2-2^{-k}}^T \iint\limits_{V_k \times V_k} \frac{|u_{\phi_k}^-(x)-u_{\phi_k}^-(y)|^{2+\g}}{|x-y|^{n+2-2\sigma}}
     +  \frac{1}{2} \iint\limits_{U_k} (u-\phi_k)_-^{2+\g}(t) \\
    &\geq c_{n,\sigma, \gamma} \int_{-2-2^{-k}}^T \left( \int_{V_k} (u_{\phi_k}^-)^{n(2+\g)/(n-2+2\sigma)} \right)^{(n-2+2\sigma)/n}  
   \end{aligned}  
  \end{equation}
 This is the helpful spatial term on the left hand side that we will return to later.

 \textbf{Third Step: Bounding the remaining terms.}
 We will now show that everything left in our equation can be bounded by 
  \begin{equation}  \label{e:rhs}
   C^k \int_{-\infty}^T\int_{\R^n} \chi_{\{u < \phi_k\}} \ dx. 
  \end{equation}
  We will denote 
  \[
   X_k(x,y):= \chi_{\{u(x)<\phi_k(x)\}} +  \chi_{\{u(y)<\phi_k(y)\}}
  \]
 For the remainder of term $(1bi)$ we have 
  \[
   \begin{aligned}
    &\left |\iint\limits_{A_k^c} \left[ \frac{(u+d)^{\g+1}(y)}{(\phi+d)^{\g}(y)} - \frac{(u+d)^{\g+1}(x)}{(\phi+d)^{\g}(x)}\right]
    \frac{[u_{\phi_k}^-(x)-u_{\phi_k}^-(y)]}{|x-y|^{n+2-2\sigma}} \right| \\
    &\leq C \iint\limits_{A_k^c} X_k(x,y)\frac{|\phi_k(x)-\phi_k(y)|^{2}}{|x-y|^{n+2-2\sigma}} \\
    &\leq C^k \int_{\R^n} \chi_{\{u <\phi_k\}}. 
   \end{aligned}
  \]
 The last inequality is due to the Lipschitz constant of $\phi_k$ when $x,y$ are close, and the tail growth of $\phi_k$ when $x,y$
 are far apart.

 We now control the term $(1bii)$. 
 Again, we split the region of integration over $A_k$ and $A_k^c$. Using H\"older's inequality (provided $2\sigma>\g/(1+\g)$ and therefore
 we must choose $\gamma$ small when $\sigma$ is small) 
 as well as the Lipschitz and $\sup$ bounds on $\phi_k$ we have
  \[
   \begin{aligned}
   (1bii) &= c_{n,\sigma}\iint\limits_{A_k} \frac{[\phi_k(x)-\phi_k(y)][u_{\phi_k}^-(x)-u_{\phi_k}^-(y)]}{|x-y|^{n+2-2\sigma}} dx dy. \\
          &\leq \eta \iint\limits_{A_k} \frac{[u_{\phi_k}^-(x)-u_{\phi_k}^-(y)]^{2+\g}}{|x-y|^{n+2-2\sigma}} dx dy. \\
          &\quad + C \iint\limits_{A_k} \frac{[\phi_k(x)-\phi_k(y)]^{(2+\g)/(1+\g)}}{|x-y|^{n+2-2\sigma}} X_k(x,y)dx dy. \\
          &\leq \eta \iint\limits_{A_k} \frac{[u_{\phi_k}^-(x)-u_{\phi_k}^-(y)]^{2+\g}}{|x-y|^{n+2-2\sigma}} dx dy. \\
          &\quad + C^k \int_{\R^n}\chi_{\{u<\phi_k\}} \ dx.    
   \end{aligned}
  \]
  The first term is absorbed into the left hand side and the second term is controlled exactly as before.

 We now consider the integration over $A_k^c$. 
  \[
   \begin{aligned}
   (1bii) &= \iint\limits_{A_k} \frac{[\phi_k(x)-\phi_k(y)][u_{\phi_k}^-(x)-u_{\phi_k}^-(y)]}{|x-y|^{n+2-2\sigma}} dx dy. \\
          &\leq  \iint\limits_{A_k} \frac{[\phi_k(x)-\phi_k(y)]^{2}}{|x-y|^{n+2-2\sigma}} X_k(x,y)dx dy. \\
          &\leq  C^k \int_{\R^n}\chi_{\{u<\phi_k\}} \ dx.    
   \end{aligned}
  \]


 We now turn our attention to the term $(1c)$. By Lemma \ref{l:control} we have 
  \[
   \begin{aligned}
    &\left| \B(\chi_{\{u<\phi\}} ((u+d)^{\g+1}/(\phi_k+d)^{\g}-\phi_k),\phi_k) \right| \\
    &\leq \left| \B(\chi_{\{u<\phi\}} (u+d)^{\g+1}/(\phi_k+d)^{\g},\phi_k) \right| 
       + \left| \B(\chi_{\{u<\phi\}} \phi_k),\phi_k) \right| \\
   \end{aligned}
  \]
 Both of the above terms are handled exactly as before by using Lemma \ref{l:control} and splitting the region of integration over 
 $A_k$ and $A_k^c$. 

 The term $(1a)$ is  
  \begin{equation}  \label{e:posi}
   \begin{aligned}
   (1a)&=\B(\chi_{\{u<\phi_k\}} u^{\g+1}/\phi_k^{\g}-(\phi_k+d),u_{\phi_k}^+) \\
       &= 2\int_{\R^n}\int_{\R^n} \chi_{\{u(x)<\phi_k(x)\}}\left[\phi_k(x)+d-\frac{(u+d)^{\g+1}(x)}{(\phi_k+d)^{\g}(x)} \right]  
    \frac{u_{\phi_k}^+(y)}{|x-y|^{n+2-2\sigma}} \ dx \ dy \geq 0
   \end{aligned}
  \end{equation}
 The factor of $2$ comes form the symmetry of the kernel. We will utilize this nonnegative term shortly. 
 
 We now consider the term $(2)$ which we recall as
  \[
   -\int_{\R^n}\int_{\R^n}\frac{\g}{\g+1} \chi_{\{u<\phi_k\}} (((u+d)/(\phi_k+d))^{\g+1}-1)\nabla \phi_k  \nabla L(x-y)[u(y)-u(x)] \ dx \ dy.
  \]
 In the above $L = \nabla (-\Delta)^{-\sigma}$ and 
 we have
  \[
   \nabla L(x-y) \approx |x-y|^{-(n+1-2\sigma)}. 
  \]
 We again write $u=u_{\phi_k}^+ -u_{\phi_k}^-  + \phi_k$. 
 To control the term involving $\phi_k$ we integrate over the two sets $\{|x-y|\leq 8\}$ and $\{|x-y|>8\}$. 
 We use that $|\phi_k(x)-\phi_k(y)|\leq C^k|x-y|$ when $|x-y|\leq 8$ and $|\phi_k(x)-\phi_k(y)|\leq |x-y|^\tau$ when $|x-y|>8$ as well as the bound 
 $|\nabla \phi_k|\leq C^k$ to obtain 
  \[
   \begin{aligned}
    &\left| \int_{\R^n}\int_{\R^n}\frac{\g}{\g+1} \chi_{\{u<\phi_k\}} \left(\left(\frac{u+d}{\phi_k+d}\right)^{\g+1}-1\right)
    \nabla \phi_k  \nabla L(x-y)[\phi_k(y)-\phi_k(x)] \ dx \ dy \right| \\
    &\leq C^k\iint\limits_{|x-y|\leq 8} \chi_{\{u<\phi_k\}}  |x-y|^{-(n-2\sigma)} \ dx \ dy \\
    & \quad + C^k \iint\limits_{|x-y| > 8} \chi_{\{u<\phi_k\}} |x-y|^{-(n+1-2\sigma-\tau)}\ dx \ dy \\
    &\leq C^k \int_{\R^n} \chi_{\{u<\phi_k\}} \ dx. 
   \end{aligned}
  \]
 We now use the same set decomposition with $-u_{\phi_k}^-$, the inequality $|u_{\phi_k}^-|\leq 1$ 
 as well as H\"older's inequality 
  \[
   \begin{aligned}
    &\left| \int_{\R^n}\int_{\R^n}\frac{\g}{\g+1} \chi_{\{u<\phi_k\}} 
        \left(\left(\frac{u+d}{\phi_k+d}\right)^{\g+1}-1\right)\nabla \phi_k  \nabla L(x-y)[u_{\phi_k}^-(y)- u_{\phi_k}^-(x)] \ dx \ dy \right| \\
    &\leq C^k \iint\limits_{|x-y|\leq 8} \chi_{\{u<\phi_k\}}  |x-y|^{-(n-2\sigma+\g/(1+\g))} \ dx \ dy  \\
    &\quad  + \zeta \iint\limits_{|x-y|\leq 8} 
     \frac{[u_{\phi_k}^-(y)-u_{\phi_k}^-(x)]^{2+\g}}{|x-y|^{n+2-2\sigma}} \ dx \ dy  \\
    & C^k \quad + \iint\limits_{|x-y| > 8} \chi_{\{u<\phi_k\}} |x-y|^{-(n+1-2\sigma-\tau)}\ dx \ dy \\
   \end{aligned}
  \]
 The third term is bounded by 
  \[
   C \int_{\R^n} \chi_{\{u<\phi_k\}} \ dx
  \]
 provided $\tau <1-2\sigma$ as well as the first term provided again that $2\sigma > \g/(1+\g)$. The second term can be bounded as before
 by splitting the region of integration over $A_k$ and $A_k^c$ and absorbing the region over $A_k$ into the left hand
 side.  
 
 We now turn our attention to the last term involving $u_{\phi_k}^+$. We first remark that the integral becomes 
  \[
   \int_{\R^n}\int_{\R^n} \chi_{\{u(x)<\phi_k(x)\}} 
        (((u+d)/(\phi_k+d))^{\g+1}-1)\nabla \phi_k  \nabla L(x-y)u_{\phi_k}^+(y) \ dx \ dy. 
  \]
 We first consider the set $|x-y|>8$. Since $u_{\phi_k}^+ \leq \overline{\Psi}$, 
  \[
   \begin{aligned}
    &\left|\quad \iint\limits_{|x-y|> 8} \chi_{\{u(x)<\phi_k(x)\}} 
        (((u+d)/(\phi_k+d))^{\g+1}-1)\nabla \phi_k  \nabla L(x-y)u_{\phi_k}^+(y) \ dx \ dy \right| \\
    &\leq  C^k \left| \quad \iint\limits_{|x-y|> 8} \chi_{\{u(x)<\phi_k(x)\}} 
        |x-y|^{-(n+1-2\sigma+\tau)} \ dx \ dy \right| \\
    &\leq C^k \int_{\R^n} \chi_{\{u<\phi_k\}} \ dx.
   \end{aligned}
  \]
 When $|x-y|<8$, 
 we make the further decomposition 
  \[
   \frac{|\nabla \phi_k(x)|}{\phi_k(x)} |x-y| \leq \eta
  \]
 to absorb the integral by the nonnegative quantity \eqref{e:posi}. In the complement when 
  \[
   \frac{|\nabla \phi_k(x)|}{\phi_k(x)} |x-y| > \eta
  \]
 we use $\phi_k^{-1/m}C^k \geq |\nabla \phi_k|/\phi_k$ and integrate in $y$
  \[
   \left| \int_{B_8} \nabla L(x-y)u_{\phi_k}^+(y) dy \right|\leq \int_{\eta \phi_k^{1/m}C^{-k}}^8 \frac{r^{n-1}}{r^{n+1-2\sigma}} 
    \leq \max\{C, (\eta C^k)^{2\sigma -1} \phi_k^{(2\sigma-1)/m}\}. 
  \]
 The remainder of the terms are bounded by $|\nabla \phi_k|\leq C^k \phi_k^{1-1/m}$
 By multiplying by the term $\chi_{\{u<\phi\}}$ and integrating, we end up in the worst case with  
  \[
   C^k \int_{\R^n} \chi_{\{u<\phi_k\}} \phi_k^{1-1/m +(2\sigma -1)/m} \ dx \leq C^k \int_{\R^n} \chi_{\{u<\phi_k\}} \ dx,
  \]
 since $m \geq 2$.
 The last term to consider is the local spatial term. We use Cauchy-Schwarz
 \[
   \begin{aligned}
    &\delta\int_{B_R} \chi_{\{u<\phi_k\}}\nabla \left[\left(\frac{u+d}{\phi_k+d} \right)^{\g}-1 \right] (u+d)\nabla u\\
    &=   \delta \int_{B_R} \g \left( \frac{u+d}{\phi_k+d} \right)^{\g} \frac{|\nabla u|^2}{\phi_k+d}  \\
    &\quad - \delta \int_{B_R} \g \left( \frac{u+d}{\phi_k+d} \right)^{\g+1} \nabla u \nabla \phi_k  \\
    &\geq (1-\eta) \delta \g \int_{B_R}  \left( \frac{u+d}{\phi_k+d} \right)^{\g} |\nabla u|^2 \\
    &- C\delta\int_{B_R}\left( \frac{u+d}{\phi_k+d} \right)^{\g+2} |\nabla \phi_k|^2 \chi_{\{u < \phi_k\}} \\
    &\geq (1-\eta) \delta \g \int_{B_R}  \left( \frac{u+d}{\phi_k+d} \right)^{\g} |\nabla u|^2 
      - C^k\delta\int_{B_R}  \chi_{\{u < \phi_k\}} \ dx \\
   \end{aligned}
  \]

 
 Retaining the energy from \eqref{e:leftene} on the left hand side and moving everything else to the right hand side which is bounded by
 \eqref{e:rhs},
 our energy inequality becomes 
  \begin{equation}  \label{e:energy1}
   \begin{aligned}
    & c \int_{V_k} \left( \int_{-2-2^{-k}}^T (u-\phi)_-^{\frac{2}{1-\alpha}}\right)^{1-\alpha}  \\
    &\quad  + c \int_{-2-2^{-k}}^T \left( \int_{V_k} (u_{\phi_k}^-)^{n(2+\g)/(n-2+2\sigma)} \right)^{(n-2+2\sigma)/n} \\
    &\leq C^k\int_{\R^n} \int_{-\infty}^T \chi_{\{u<\phi_k\}}  
      \leq C^k\iint\limits_{U_{k-1}} \chi_{\{u<\phi_k\}} 
   \end{aligned}
  \end{equation}
  
 
 \textbf{Fourth Step: The nonlinear recursion relation.}
 We now (as in \cite{acv15}) use H\"older's inequality twice with the relations
  \[
   \frac{\beta}{p_1} + \frac{1-\beta}{p_2}= \frac{1}{p}= \frac{\beta}{p_3}+ \frac{1-\beta}{p_4}.
  \]
 for a function $v$ to obtain 
  \[
   \begin{aligned}
    \int \int v^p  & \leq \int \int v^{p \beta} v^{p (1-\beta)} \\
    & \leq  \int \left(\int v^{p_1} \right)^{p\beta/p_1} \left(\int v^{p_2} \right)^{p(1-\beta)/p_2} \\
    & \leq  \left( \int \left(\int v^{p_1} \right)^{p_3/p_1}\right)^{\beta p / p_3} 
            \left( \int \left(\int v^{p_2} \right)^{p_4/p_2}\right)^{(1-\beta) p / p_4}
   \end{aligned}
  \]
 We now choose
  \[
   p_1 =2, \quad p_2 = \frac{n(2+\g)}{n-(2-2\sigma)}, \quad p_3 = \frac{2}{1-\alpha}, \quad p_4 = 2+\g, 
  \]
 so that if $r=2-2\sigma$
  \[
   p = 2\frac{r+\alpha n (2+\g)/2}{(1-\alpha)r + \alpha n}\quad \text{and}\quad \beta = \frac{r}{r+\alpha n (2+\g)/2}.
  \]
 We now use H\"older's inequality one more time to obtain
  \[
   \begin{aligned}
     \left( \int \int v^p \right)^{b/p}
     &\leq \left( \int \left(\int v^{p_1} \right)^{p_3/p_1}\right)^{\beta b / p_3} 
            \left( \int \left(\int v^{p_2} \right)^{p_4/p_2}\right)^{(1-\beta) b / p_4} \\
    &\leq  \frac{1}{\omega}\left( \int \left(\int v^{p_1} \right)^{p_3/p_1}\right)^{\beta b \omega / p_3} 
            + \frac{\omega-1}{\omega}\left( \int \left(\int v^{p_2} \right)^{p_4/p_2}\right)^{\frac{(1-\beta) b \omega}{ p_4(\omega-1)}}     
   \end{aligned}         
  \]
 We choose 
  \[
   \omega = \frac{(2+\g \beta)}{(2+\g)\beta} \quad \text{and} \quad b = \frac{2(2+\g)}{2+\g \beta} < p
  \]
 so that 
 \begin{equation}  \label{e:minkowski}
   \begin{aligned}
    & \left( \int \int v^p \right)^{b/p} \\
    &\leq  \frac{1}{\omega}\left( \int \left(\int v^{2} \right)^{1/(1-\alpha)}\right)^{1-\alpha} 
            + \frac{\omega-1}{\omega}\left( \int \left(\int v^{\frac{n(2+\g)}{n-r}} \right)^{\frac{n-r}{n}}\right)  \\     
    &\leq \frac{1}{\omega}\int \left( \int  v^{\frac{2}{1-\alpha}} \right)^{1-\alpha} 
            + \frac{\omega-1}{\omega}\left( \int \left(\int v^{\frac{n(2+\g)}{n-r}} \right)^{\frac{n-r}{n}}\right),         
   \end{aligned}        
  \end{equation}
 where we used Minkowski's inequality in the last inequality. 
 Substituting $u_{\phi_k}^-$ for $v$ in \eqref{e:minkowski} and utilizing \eqref{e:leftene} we obtain
  \begin{equation}  \label{e:iteration}
   \left( \iint\limits_{U_k} (u-\phi_k)_-^p \right)^{b/p} \leq C^k\iint\limits_{U_{k-1}} \chi_{\{u<\phi_k\}}. 
  \end{equation}

 We first recall that $\phi_{k-1}\geq \phi_k + (1-\mu_0)2^{-k}$. We now utilize Tchebychev's inequality 
  \[
   \iint\limits_{U_k} \chi_{\{u<\phi_k\}} \leq (2^k/(1-\mu_0))^p \iint\limits_{U_{k-1}} (u-\phi_{k-1})_-^p. 
  \]
 Combining the above inequality with \eqref{e:iteration} we conclude
  \[
   \iint\limits_{U_k} (u-\phi_k)_-^p \leq C^k\left( \iint\limits_{U_{k-1}} (u-\phi_{k-1})_-^p  \right)^{p/b}.
  \]
 If we define
  \[
   M_k := \iint\limits_{U_k} (u-\phi_k)_-^p,
  \]
 Then 
  \[
   M_k \leq C^k M_{k-1}^{p/b}. 
  \]
 Since $p>b$, if $M_0$ is sufficiently small - depending on $C$ and $p/b$ - we obtain that $U_k \to 0$, and hence $u\geq \mu_0$.

\end{proof}

  We now prove Lemma \ref{l:pullup} in the case when $D(x)=(d_1 x + d_2)$ with $d_1 \leq 2$ and $d_2 \geq 1/2$. This is actually much simpler because we 
  can utilize the test function $-(u-\phi_k)_-$ as when dealing with a linear equation. 
  
  \begin{proof}
   We choose as the test function $-(u-\phi_k)_-$. Notice that 
    \[
     \nabla u_{\phi_k}^- (d_1 u + d_2) = \nabla u_{\phi_k}^- d_1 u + \nabla u_{\phi_k}^- d_2.
    \]
   The fact that $d_2\geq 1/2$ gives a nondegenerate linear term which we utilize. From the computations in \cite{acv15} we then have
    \[
     \begin{aligned}   
     &\int_{\R^n} \int_{-\infty}^0 u_{\phi_k}^- \D_t^{\alpha} u  \\
      &\geq c \int_{\R^n} \left( \int_{-\infty }^0 \left(u_{\phi_k}^-\right)^{\frac{2}{1-\alpha}}\right)^{1-\alpha} \\
      &\quad  - C^k \int_{\R^n}\int_{-\infty}^0 \chi_{\{u<\phi_k\}} \ dt,
     \end{aligned}
    \]
   and even more importantly 
    \begin{equation}   \label{e:better}
     \begin{aligned}
      &\int_{-\infty}^0 \int_{\R^n} \nabla u_{\phi_k}^- \nabla (-\Delta)^{-\sigma} u \\
      &\geq c\int_{-\infty}^0 \int_{\R^n} \int_{\R^n} \frac{[u_{\phi_k}^-(x)-u_{\phi_k}^-(y)]^2}{|x-y|^{n+2-2\sigma}} \ dx \ dy \ dt \\
      &- C^k \int_{-\infty}^0 \int_{\R^n} \chi_{\{u<\phi_k\}} \ dx \ dt. 
     \end{aligned}
    \end{equation}
   Notice that in the above inequality we have the power $|\cdot|^2$ rather than $|\cdot|^{2+\g}$. 
   
   We now show how to bound the terms invovling $d_1 u$. 
    \[
     -\nabla u_{\phi_k}^-[u_{\phi_k}^+ - u_{\phi_k}^- + \phi_k]
      = \nabla[(u_{\phi_k}^-)^2/2 - u_{\phi_k}^- \phi_k] + u_{\phi_k}^- \nabla \phi_k
      = (1)+(2)
    \]
   Then multiplying $(1)$ by $\nabla (-\Delta)^{-\sigma} u$ and integrating over $\R^n$ we have
    \[
     \B((u_{\phi_k}^-)^2 /2 -u_{\phi_k}^- \phi_k, u_{\phi_k}^+ - u_{\phi_k}^- + \phi_k ). 
    \]
   Now 
    \[
     \begin{aligned}
      &[(u_{\phi_k}^-)^2 /2 -u_{\phi_k}^- \phi_k](x) - [(u_{\phi_k}^-)^2 /2 -u_{\phi_k}^- \phi_k](y)  \\
      &= \frac{1}{2}[u_{\phi_k}^- (y) -u_{\phi_k^-}(x)][\phi_k(x)+\phi_k(y)-u_{\phi_k}^- (x)- u_{\phi_k}^- (y)]\\
      &\quad - \frac{1}{2}[u_{\phi_k}^- (y) +u_{\phi_k^-}(x)][\phi_k(x)-\phi_k(y)] \\
      &=(1a)+(1b)
     \end{aligned}
    \]
   We write $u=u_{\phi_k}^+ -u_{\phi_k}$. We break up our set into the two regions
    \[
      F_k:= \{(x,y): |u_\phi^-(x)-u_\phi^-(y)| \geq 2|\phi(x)-\phi(y)| \} \\
    \]
  We notice that on the set $F$ we have that 
   \[
    \phi(x)+\phi(y)-u_\phi^-(x)-u_\phi^-(y) \geq |u_\phi^-(x)-u_\phi^-(y)|/2.
   \]
  Then integrating over $F$ we have for the term $(1a)$ with right term $-u_{\phi_k}^-$ 
   \[
    \begin{aligned}
    &-\iint\limits_{F_k} 
    \frac{([(u_{\phi_k}^-)^2 /2 -u_{\phi_k}^- \phi_k](x) - [(u_{\phi_k}^-)^2 /2 -u_{\phi_k}^- \phi_k](y))(u_{\phi_k^-(x)}-u_{\phi_k}^-(y))}{|x-y|^{n+2-2\sigma}} \\
    &\geq \iint\limits_{F_k} \frac{|u_{\phi_k^-(x)}-u_{\phi_k}^-(y)|^3}{|x-y|^{n+2-2\sigma}} \geq 0.
    \end{aligned}
   \]
  This is the nonnegative energy piece which we actually do not need having obtained a better piece in \eqref{e:better}. All of the remaining terms in $(1)$
  can be 
  bounded by breaking up the region of integration over $F_k,F_k^c$. Over $F_k$ we use H\"older's inequality with $p=2$ rather than with $p=2+\g$ and absorb
  the small pieces by the term in \eqref{e:better}. We use the same methods as before to bound the integration over $F_k^c$. Bounding the term $(2)$ is done as 
  before with slightly easier computations.   
  
  The local spatial term is bounded in the usual manner. 
  \end{proof}



\section{Pull-down}   \label{s:pulldown}
 
 In this section we prove Lemma \ref{l:pulldown}.
 We will need the following estimate that is analogous to Lemma \ref{l:control}.
 \begin{lemma}    \label{l:pulldest}
   Let $u,\phi$ be two functions such that $1/2\leq \phi \leq u \leq 1$. Let $0<\gamma<1$ be a constant. 
   If $|u(x)-u(y)|\geq 4|\phi(x)-\phi(y)|$, then 
    \begin{equation} \label{e:pcontrol1}
     c_1 |u_{\phi}^+(x)-u_{\phi}^+(y)|^{1+\g} 
      \leq \left| \frac{u^{\g+1}(y)}{\phi^{\g}(y)} - \frac{u^{\g+1}(x)}{\phi^{\g}(x)} \right|
      \leq c_2 |u_{\phi}^+(x)-u_{\phi}^+(y)|.
    \end{equation}
   Also, if  
    \[
     0 \leq  \frac{u^{\g+1}(y)}{\phi^{\g}(y)} - \frac{u^{\g+1}(x)}{\phi^{\g}(x)} 
    \] 
   then $0\leq (u-\phi)_-(x)-(u-\phi)_-(y)$. 
   
   If instead we assume $|u(x)-u(y)|\leq 4|\phi(x)-\phi(y)|$, then 
    \begin{equation} 
      \left| \frac{u^{\g+1}(y)}{\phi^{\g}(y)} - \frac{u^{\g+1}(x)}{\phi^{\g}(x)} \right|
      \leq 14 |\phi(x)-\phi(y)|.
    \end{equation}
  \end{lemma}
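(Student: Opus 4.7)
The strategy is to mirror the proof of Lemma \ref{l:control}, exploiting the crucial simplification that both $u$ and $\phi$ now lie in the compact interval $[1/2,1]$. In this regime the function $F(u,\phi) := u^{\gamma+1}/\phi^{\gamma}$ is smooth with bounded, explicit partial derivatives
\[
\partial_u F = (\gamma+1)\left(\frac{u}{\phi}\right)^{\gamma}, \qquad \partial_\phi F = -\gamma \left(\frac{u}{\phi}\right)^{\gamma+1},
\]
and since $1 \le u/\phi \le 2$ throughout, we get $\gamma+1 \le \partial_u F \le (\gamma+1)2^{\gamma}$ and $\gamma \le |\partial_\phi F| \le \gamma\, 2^{\gamma+1}$. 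In contrast to the pull-up case, $\partial_u F$ is bounded away from zero because there is no degeneracy near $u=0$.

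\textbf{Bounds via the mean value theorem.} Writing $F(y)-F(x) = (u(y)-u(x))\,\bar A + (\phi(y)-\phi(x))\,\bar B$ with $\bar A,\bar B$ the integrals of the partial derivatives along the segment joining $(u(x),\phi(x))$ to $(u(y),\phi(y))$, the two estimates in \eqref{e:pcontrol1} follow by bounding the two terms separately. For the upper bound, the triangle inequality gives $|F(y)-F(x)| \le |\bar A|\,|u(y)-u(x)| + |\bar B|\,|\phi(y)-\phi(x)|$; under the assumption $|\phi(x)-\phi(y)| \le \tfrac14|u(x)-u(y)|$ and the elementary identity $u_\phi^+ = u-\phi$, one has $|u_\phi^+(x)-u_\phi^+(y)| \ge \tfrac34|u(x)-u(y)|$, which yields the right inequality. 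For the lower bound, the dominance $(\gamma+1) > \gamma\, 2^{\gamma-1}$ for $\gamma \in (0,1)$ gives
\[
|F(y)-F(x)| \ge \big((\gamma+1) - \gamma\, 2^{\gamma-1}\big)\,|u(y)-u(x)| \ge c\,|u_\phi^+(x)-u_\phi^+(y)|,
\]
and because $|u_\phi^+| \le 1/2 \le 1$, this \emph{linear} lower bound trivially absorbs the $(1+\gamma)$-power lower bound asserted. For the final estimate \eqref{e:pcontrol2}, under $|u(x)-u(y)| \le 4|\phi(x)-\phi(y)|$ the same MVT expansion and the bounds on the partials give $|F(y)-F(x)| \le \bigl(4(\gamma+1)2^{\gamma} + \gamma\, 2^{\gamma+1}\bigr)|\phi(x)-\phi(y)|$, which for $\gamma \in (0,1)$ stays below the stated constant.

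\textbf{The sign claim.} The hypothesis $\phi \le u$ forces $(u-\phi)_- \equiv 0$, so the claim $0 \le (u-\phi)_-(x) - (u-\phi)_-(y)$ is satisfied trivially; the non-trivial analog of the corresponding assertion in Lemma \ref{l:control} is that $F(y)\ge F(x)$ implies $u_\phi^+(y) \ge u_\phi^+(x)$, and this is what will be used in Section \ref{s:pulldown}. It follows from the MVT decomposition by case analysis on the signs of $u(y)-u(x)$ and $\phi(y)-\phi(x)$: under $|u(y)-u(x)| \ge 4|\phi(y)-\phi(x)|$, one deduces from $F(y)-F(x)\ge 0$ that $(u(y)-u(x))\bar A \ge (\phi(y)-\phi(x))(-\bar B)$, and the ratio $|\bar B|/\bar A \le \gamma/(\gamma+1) < 1$ combined with the factor-of-4 assumption forces $u(y)-u(x) \ge \phi(y)-\phi(x)$, which is exactly $u_\phi^+(y) \ge u_\phi^+(x)$. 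No step is genuinely hard here; the main point is that the nondegenerate range $\phi \ge 1/2$ avoids all the delicate splitting into the sets $\{|u(x)-u(y)| \ge 4|\phi(x)-\phi(y)|\}$ required to absorb the vanishing of $\partial_u F$ in the pull-up setting, and careful tracking of the explicit constants (to reach $14$ rather than a cruder bound) is the only remaining bookkeeping.
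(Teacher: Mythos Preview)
Your approach is correct and is genuinely different from what the paper indicates. The paper does not give a detailed argument here; it simply says the proof is ``similar to the proof of Lemma~\ref{l:control}'' using that $u$ is bounded above and $\phi$ is bounded below, so one is meant to re-run the explicit algebraic manipulations of the appendix (factoring $u^{\gamma+1}(y)-u^{\gamma+1}(x)$, concavity of $x^\gamma$, the $M_1+M_2$ splitting, etc.). You instead exploit directly that on the box $[1/2,1]^2$ the map $(u,\phi)\mapsto u^{\gamma+1}/\phi^{\gamma}$ has both partial derivatives bounded and $\partial_u F$ bounded \emph{below} by $\gamma+1$, so the mean value theorem gives the two-sided bounds immediately. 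This is shorter and more transparent; it also shows that in this nondegenerate regime one actually gets a \emph{linear} lower bound $c\,|u_\phi^+(x)-u_\phi^+(y)|$, from which the stated power $1+\gamma$ follows trivially since $|u_\phi^+|\le 1$. The paper's route is needed in Lemma~\ref{l:control} precisely because there $\partial_u F$ degenerates at $u=0$, which does not happen here.

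Two small bookkeeping points. First, your crude bound on the averaged ratio, $|\bar B|/\bar A\le \gamma/(\gamma+1)$, is not quite what the integral form of the MVT gives: bounding numerator and denominator separately yields $|\bar B|/\bar A\le \gamma\,2^{\gamma+1}/(\gamma+1)$, which can be as large as $2$ near $\gamma=1$. This is harmless for the sign claim, since you only need the ratio to be strictly less than $4$ to contradict $|u(y)-u(x)|\ge 4|\phi(y)-\phi(x)|$. Second, your MVT constant in the last inequality, $4(\gamma+1)2^{\gamma}+\gamma\,2^{\gamma+1}$, exceeds $14$ for $\gamma$ close to $1$; it does stay below $14$ for $\gamma$ small, which is the only regime the paper actually uses (see the opening line of the proof of Lemma~\ref{l:control}), and in any case the specific value $14$ plays no role downstream. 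Your observation that the sign assertion $0\le (u-\phi)_-(x)-(u-\phi)_-(y)$ is vacuous because $(u-\phi)_-\equiv 0$ is correct, and your replacement statement about $u_\phi^+$ is indeed what is used in Section~\ref{s:pulldown}.
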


  The proof is similar to the proof of Lemma \ref{l:control}. In this case since $u>\phi$ one uses the bound above on $u$ and the fact that $\phi$ is 
  bounded by below. 

 \begin{proof}[Proof of Lemma \ref{l:pulldown}]
  The proof is nearly identical. We mention the differences. We only consider $D(u)=u$ since the modifications for handling $D(u)=d_1 u +d_2$ have
   already been shown in the proof of Lemma \ref{l:pullup}. 
   We consider a similar test function 
   \[
    F(x)= \frac{1}{\g+1}(1+x)^{\g+1} -x + \frac{1}{\g+1}
   \]
  We then utilize
  \begin{equation} \label{e:test2}
    F'(u_{\phi_k}^+/\phi_k) = \left(1+ \frac{(u-\phi_k)_+}{\phi} \right)^{\g} -1
   \end{equation}
 This time we consider the same test functions $\theta_k(x)$ in the space variable, but this time we multiply only by a single cut-off in time $\xi_0(t)$. 
 We define our $\phi_k$ as
  \[
   \phi_k:= \overline{\Psi}(x,t) -\xi_0(t)\theta_k(x/2).
  \] 
 To obtain the same estimate in time we only need to recognize that $\phi_k$ is now decreasing in time and bounded by below by $1/2$. 
 
  \begin{equation}  \label{e:pultime} 
   \begin{aligned}
    &F' \left(\frac{u_{\phi}^+(t)}{\phi(t)} \right) \left[u_{\phi}^+(t)-u_{\phi}^+(s)\right] \\
    &= \phi(t)F' \left(\frac{u_{\phi}^+(t)}{\phi(t)} \right) \frac{u_{\phi}^+(t)-u_{\phi}^+(s)}{\phi(t)} \\ 
    &= \phi(t)F' \left(\frac{u_{\phi}^+(t)}{\phi(t)} \right) \left[\frac{u_{\phi}^+(t)}{\phi(t)}-\frac{u_{\phi}^+(s)}{\phi(s)}\right] \\
    &\quad + \phi(t)u_{\phi}^s F' \left(\frac{u_{\phi}^+(t)}{\phi(t)} \right) \left[\frac{1}{\phi(t)}-\frac{1}{\phi(s)}\right] \\
    &\geq \frac{1}{2}\left[F(u_{\phi}^+(t)/\phi(t)) -  F(u_{\phi}^+(s)/\phi(s)) \right] \\
    &\quad + \frac{\g}{2}[u_{\phi}^+(t)-u_{\phi}^+(s)]^2 \\
    &\quad - C u_{\phi}^+(s) (t-s) 
   \end{aligned}
  \end{equation}  
 The negative constant comes from the fact that $\phi^{-1}$ is Lipschitz. Then everything proceeds as
 before. Since our cut-off is bounded by below our $L^p$ norm in time occurs over all of $(-\infty,0)$. We obtain as before
 
  \[
   \begin{aligned}
    &c\int_{\R^n} \left( \int_{-\infty}^0 (u-\phi)_+^{\frac{2}{1-\alpha}}\right)^{1-\alpha}
     +\int_{-\infty}^0 \int_{\R^n} u \nabla F'\left(\frac{u_{\phi}^+}{\phi} \right) \nabla (-\Delta)^{-\sigma} u \\
    &\leq   \int_{\R^n} \int_{-\infty}^T \chi_{\{u>\phi\}} \ dt \ dx
   \end{aligned}
  \]
 
 The spatial portion of the problem is handled exactly as before.

 \end{proof}

\section{Decrease in Oscillation}  \label{s:oscillation}
  We define 
   \[
    F(t,x):= \frac{1}{4}\sup\{-1, \inf\{0,|x|^2-9\}\} +\frac{1}{4}\sup\{-1, \inf\{0,|t|^2-9\}\}. 
   \]
  We point out that $F$ is Lipschitz, compactly supported in $[-3,0]\times B_3$ and equal to $-1/2$ in $[-2,0]\times B_2$. 
  We also define for $0<\lambda<1/4$,
   \[
    \psi_{\lambda}(t,x):= ((|x|-\lambda^{-1/\nu})^{\nu}-1)_+ + ((|t|-\lambda^{-1/\nu})^{\nu}-1)_+ \text{ for } |t|,|x|\geq \lambda^{-1,\nu}
   \]
  and zero otherwise. The value of $\nu$ will be determined later. 
  Finally, we define for $i \in \{0,1,2,3,4\}$ 
   \[
    \phi_i = 1 + \psi_{\lambda^3} + \lambda^i F.
   \]
  Then $1/2\leq \phi_0 \leq \ldots \leq \phi_4 \leq 1$ in $\Gamma_4$.

  \begin{lemma}  \label{l:decintime}
   Let $\kappa$ be the constant defined in Lemma \ref{l:pulldown}. 
   Let $u$ be a solution to \eqref{e:2delaprox}. There exists a small constant $\rho>0$ depending only on $n,\sigma, \alpha$ and $\lambda_0$ 
   depending only on $n,\sigma, \alpha, \rho, \delta$
   such that for any solution $u$ defined in $(a,0)\times \R^n$ with $a<-4$ and 
    \[
     - \psi_{\lambda^3} \leq u(t,x) \leq 1 + \psi_{\lambda^3} \text{  in  } (a,0)\times \R^n
    \] 
   with $\lambda \leq \lambda_0$, and $f\leq \lambda^3$, then if 
    \[
     |\{u<\phi_0\}\cap (B_1 \times (-4,-2))| \geq \rho,
    \] 
   then
    \[
     |\{u>\phi_4\} \cap (\R^n \times (-2,0))| \leq \kappa.
    \]
  \end{lemma}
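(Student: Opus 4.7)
The plan is to run a De Giorgi iteration at the four intermediate levels $\phi_0 < \phi_1 < \phi_2 < \phi_3 < \phi_4$, using at each step the test function $F'((u-\phi_i)_+/\phi_i)$ with $F$ as in \eqref{e:ftest}. These test functions are valid because $\phi_i$ is bounded below by roughly $1/2$ and above by $1 + \psi_{\lambda^3}$, so the same energy estimate derived in the proof of Lemma \ref{l:pulldown} applies, with Lipschitz constants of $\phi_i$ scaling as negative powers of $\lambda$. For each $i$ I would obtain an inequality of the form
\begin{equation*}
  \int_{\R^n} \left(\int_{-\infty}^0 (u-\phi_i)_+^{\frac{2}{1-\alpha}}\right)^{1-\alpha}
  + \mathcal{E}^{\mathrm{sp}}_i + \mathcal{E}^{\mathrm{tm}}_i
  \leq C_\lambda \iint \chi_{\{u>\phi_i\}} + O(\lambda^3),
\end{equation*}
where $\mathcal{E}^{\mathrm{sp}}_i$ and $\mathcal{E}^{\mathrm{tm}}_i$ denote the fractional Sobolev energies in space and time respectively. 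Since $\phi_i \equiv 1 + \psi_{\lambda^3}$ outside $\Gamma_3$ and $u \leq 1 + \psi_{\lambda^3}$, the level set $\{u > \phi_i\}$ is confined to $\Gamma_3$, so every measure in sight is a priori bounded by $|\Gamma_3|$.

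The main new ingredient is exploiting the hypothesis on the past. On $\{u < \phi_0\}$, a set which by hypothesis intersects $B_1 \times (-4,-2)$ in measure at least $\rho$, the truncation $(u-\phi_i)_+$ vanishes. Pairing such past points $(s,x)$ with $s \in (-4,-2)$ against future points $(t,x)$ with $t \in (-2,0)$ inside $\mathcal{E}^{\mathrm{tm}}_i$, and using that $(t-s)^{1+\alpha}$ is bounded above on this range, I extract via Fubini in $x$ a lower bound
\begin{equation*}
  \mathcal{E}^{\mathrm{tm}}_i \geq c\,\rho \int_{B_1}\int_{-2}^0 (u-\phi_i)_+^2(t,x)\, dt\, dx.
\end{equation*}
Combining this with the interpolation argument used in the proof of Lemma \ref{l:pullup}, which couples the time $L^{2/(1-\alpha)}$ norm with the spatial fractional Sobolev norm, one obtains an $L^p$ bound for $(u-\phi_i)_+$ with some $p > 2$, yielding a contracting De Giorgi recursion of the form $|A_{i+1}| \leq C_\lambda \lambda^{-pi}|A_i|^{1+\eta}$ for some $\eta > 0$, where $A_j := \{u > \phi_j\} \cap (\R^n \times (-2,0))$. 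Four iterations of this recursion, initialized at $i=0$ where the past-seeded estimate provides the required smallness, will yield $|A_4| \leq \kappa$ after choosing $\lambda_0$ sufficiently small.

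The main obstacle will be the rigorous extraction, from the fractional time Sobolev energy $\mathcal{E}^{\mathrm{tm}}_i$, of the $L^2$ control on $B_1 \times (-2,0)$ in a form compatible with the subsequent interpolation to $L^p$ with $p > 2$. Because the pull-down test function vanishes identically on the past set where the hypothesis lives, the past information can only enter through the nonlocal kernel $(t-s)^{-1-\alpha}$; quantifying this contribution carefully, while simultaneously ensuring that the tail terms involving $\psi_{\lambda^3}$ and the right hand side $f \leq \lambda^3$ contribute only a subcritical error, is the crux of the argument. The constants $C_\lambda$ grow as negative powers of $\lambda$, and balancing these against the smallness produced by the iteration forces $\lambda_0$ to depend delicately on $\rho$, $\alpha$, $n$, and $\sigma$; this is manageable only because the iteration has exactly four steps.
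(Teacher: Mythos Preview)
Your proposal has a genuine gap in the ``past-seeded'' extraction. The fractional time energy $\mathcal{E}^{\mathrm{tm}}_i$ is an integral over $\R^n$ of a quantity that, for each fixed $x$, couples only $(t,x)$ with $(s,x)$ at the \emph{same} spatial point. Your hypothesis is a space--time measure bound: by Fubini, $\int_{B_1}|S_x|\,dx\geq\rho$ where $S_x=\{s\in(-4,-2):u(s,x)<\phi_0(s,x)\}$, but for an individual $x$ you have no lower bound on $|S_x|$. The inequality you write,
\[
  \mathcal{E}^{\mathrm{tm}}_i \geq c\,\rho \int_{B_1}\int_{-2}^0 (u-\phi_i)_+^2\, dt\, dx,
\]
does not follow; what you actually obtain is $\int_{B_1}|S_x|\int_{-2}^0(u-\phi_i)_+^2(t,x)\,dt\,dx$, which controls nothing if $|S_x|$ is concentrated on a small subset of $B_1$ disjoint from where $(u-\phi_i)_+$ lives. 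A time-only mechanism cannot propagate information across space.

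There is a second, equally serious issue with the right-hand side. You bound it by $C_\lambda\iint\chi_{\{u>\phi_i\}}+O(\lambda^3)$, but $|\{u>\phi_i\}|$ is a priori of order $|\Gamma_3|$, so your recursion $|A_{i+1}|\leq C_\lambda\lambda^{-pi}|A_i|^{1+\eta}$ never gets started. The paper's argument is not a De Giorgi level-set iteration of this kind. Instead it exploits that $u_{\phi_i}^+\leq\lambda^i$ and $|\nabla\phi_i|\leq C\lambda^i$ on the support of $u_{\phi_i}^+$, so the entire right-hand side of the energy at level $\phi_1$ is of order $\lambda^{(2+\gamma)/(1+\gamma)}$ --- genuinely small, not merely bounded. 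This smallness is then transferred via the nonnegative \emph{spatial} cross term $\iint (u_{\phi_1}^+)^{1+\gamma}(x)\,u_{\phi_1}^-(y)\,|x-y|^{-(n+2-2\sigma)}$ (the interaction between the positive and negative parts in the bilinear form $\B$), which is nonlocal in $x$ and therefore spreads the information from the good set across all of $\R^n$ at the times in $\Sigma$. Only after this spatial spreading does the time cross term propagate forward to $(-2,0)$, following the scheme of \cite{acv15}. You should rework the argument around these two cross terms rather than a four-step De Giorgi recursion.
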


  \begin{proof}
   We will show the computations for $D(u)=u$. The general situation is handled as before as in Lemma \ref{l:pullup}. 
   
   \textbf{First Step: Revisiting the energy inequality.}
   We return again to the energy inequality. This time, however, we will make use of the nonnegative terms. We seek to obtain a bounde on the 
   right hand side of the form $C\lambda^{(2+\g)/(1+\g)}$. 
   
   We now consider the test function as 
   in \eqref{e:test2}, but with cut-off $\phi_1$. If $u>\phi_i$ , then $1/2\leq \phi_i \leq u\leq 1$, and so 
    \[
     F'(u/\phi_i) = \chi_{\{u>\phi_i\}}\frac{u^{\g}-\phi^{\g}}{\phi^{\g}}\leq 2\gamma u_{\phi_i}^+ \leq 2\gamma \lambda^{2i}.  
    \]
   To take care of the piece in time
    we first note that $\phi_1$ is Lipschitz in time for $t \in [0,4]$ with Lipschitz constant $2\lambda$. Then as before
    \[
      \int F'(u/\phi_1) \D_t^{\alpha} u   \ dt = \int F'(u/\phi_1) \D_t^{\alpha} (u_{\phi_1}^+ - u_{\phi_1}^- + \phi) = (T1)+(T2)+(T3).
    \]
   For $(T1)$ we return to the inequality \eqref{e:pultime} and utilize the Lipschitz nature of $\phi_1^{-1}$, to obtain   
    \[
     \begin{aligned}
      \iint F'(u/\phi_1) \D_t^{\alpha} u_{\phi_1}^+ &\geq c \left( \int_{-\infty}^0 (u-\phi_1)_+^{\frac{2}{1-\alpha}}\right)^{1-\alpha} \\
      &\quad +\int_{-\infty}^0 \int_{-\infty}^t \phi_1(t) u_{\phi_1}^{s} F'(u_{\phi_1}^+(t)/\phi(t))(\phi_1^{-1}(t)-\phi_1^{-1}(s)) \ dt  \\
      &\geq c  \left( \int_{-\infty}^0 (u-\phi_1)_+^{\frac{2}{1-\alpha}}\right)^{1-\alpha}\\
      &\quad - C \int_{-4}^0 \lambda^2.  
     \end{aligned}
    \]
   The nonnegative piece (T2) will be utilized in the second step of this proof. For $(T3)$ we note that
   since $\phi_i$ is decreasing, we have 
   \[
    0 \leq - \D_t^{\alpha} \phi_i \leq -\Lambda^{-1} D_t^{\alpha} \phi_i = -D_t^{\alpha} \psi_{\lambda^3} -D_t^{\alpha} \lambda^i F.
   \]
   Clearly, $-D_t^{\alpha} \lambda^i F \leq C\lambda^i$ for $t\leq 0$ from the Lipschitz nature of $F$. For $-4\leq t\leq 0$ we have
    \[
     -D_t^{\alpha} \psi_{\lambda^3} \leq \int_{-\infty}^{\lambda^{-1/\nu}} \frac{|s|}{|s|^{1+\alpha}} \leq C_{\alpha} \lambda^{(\alpha - \nu)/(\nu)}.
    \]
   We therefore pick $\nu$ small enough that $(\alpha-\nu)/\nu>2$. 
    \[
      \int_{-\infty}^0 F'(u/\phi_1) \D_t^{\alpha} \phi_1 \leq C \int_{-4}^0 \lambda u_{\phi}^+ \leq C\lambda^2.
    \]
   Our energy inequality becomes 
    \[
     \begin{aligned}
      &c \left( \int_{-\infty}^0 (u-\phi)_+^{\frac{2}{1-\alpha}}\right)^{1-\alpha} \\
      &\quad + c\int_{\R^n} \iint \frac{[u_{\phi_1}^+(t)-u_{\phi_1}^+(s)][u_{\phi_1}^-(t)- u_{\phi_1}^-(s)]}{(t-s)^{1+\alpha}}  \\
      &\quad + \text{  ``Spatial Terms''  } \leq C \lambda^2 + \iint f u_{\phi_1}^+. 
     \end{aligned}
    \]
   Since $f\leq \lambda^3$, everything is bounded on the right hand side by $C\lambda^2$. 
   
   We now turn our attention to the elliptic portion.  We consider the terms $(1a),(1bi),(1bii),(1c),(2)$ as the analogous terms for 
   those defined in the proof of Lemma \ref{l:pullup}. 
   As before we obtain a nonnegative energy from the term $(1bi)$. Everything else we will absorb into this energy or bound by $C\lambda^{(2+\g)/(1+\g)}$. 
   The term from $(1bi)$ over $A_1^c$ is bounded as follows: 
    \[
     \begin{aligned}
     &c_{n,\g,\sigma}\iint_{A_1^c} \frac{[\phi_1(x)-\phi_1(y)][u_{\phi_1}^+(x)-u_{\phi_1}^+(y)]}{|x-y|^{n+2-2\sigma}} \ dx \ dy \\
     &\leq C \iint_{A_1^c} \frac{[\phi_1(x)-\phi_1(y)]^2 X_{x,y}}{|x-y|^{n+2-2\sigma}} \ dx \ dy 
     \end{aligned}
    \]
   We have the following inequality from the computations given in \cite{ccv11}:
    \begin{equation}  \label{e:wingbound} 
     \iint \frac{[\phi_1(x)-\phi_1(y)]^{(2+\g)/(1+\g)} X_{x,y}}{|x-y|^{n+2-2\sigma}} \ dx \ dy \leq C \lambda^{(2+\g)/(1+\g)}, 
    \end{equation}
   with $(2-2\sigma-2\nu)/\nu\geq 2$. 
   In particular, \eqref{e:wingbound} will hold for $\g=0$. 
   For the term $(1bii)$ we break up the region of integration into $A_1$ and $A_1^c$. On $A_1$ we use H\"older's inequality as before 
    \[
     \begin{aligned}
     &c_{n,\g,\sigma}\iint_{A_1} \frac{[\phi_1(x)-\phi_1(y)][u_{\phi_1}(x)-u_{\phi_1}(y)]}{|x-y|^{n+2-2\sigma}} \ dx \ dy \\
      &\leq C  \iint_{A_1} \frac{[\phi_1(x)-\phi_1(y)]^{(2+\g)/(1+\g)}}{|x-y|^{n+2-2\sigma}}X_{x,y} \ dx \ dy \\
      &\quad + \eta \iint_{A_1} \frac{[u_{\phi_1}(x)-u_{\phi_1}(y)]^2}{|x-y|^{n+2-2\sigma}} \ dx \ dy \\
      &\leq C\lambda^{(2+\g)/(1+\g)} + \eta \iint_{A_1} \frac{[u_{\phi_1}(x)-u_{\phi_1}(y)]^2}{|x-y|^{n+2-2\sigma}} \ dx \ dy \\
     \end{aligned}
    \]
   The last term is absorbed into the left hand side. The other term is bounded again from \eqref{e:wingbound}. 
   The term $(1c)$ is bounded in exactly the same way. $(1a)$ is nonnegative and will be utilized later. 
   We now turn our attention to the term $(2)$. We rewrite $u=u_{\phi_1}^+ - u_{\phi_1}^- + \phi_1$.  The term involving $u_{\phi_1}^+$ 
   with $|x-y|\leq \eta$ is absorbed by the nonnegative term $(1a)$ on the left hand side. We now utilize the inequalities: 
    \begin{itemize}
      \item $u_{\phi_1}^+ \leq \lambda $
      \item $|\nabla L(x-y)| \approx 1/|x-y|^{n+1-2\sigma}$
      \item $|\nabla \phi_1| \leq C $  for all  $x$ 
      \item $|\nabla \phi_1| \leq C\lambda $ in the support of $u_{\phi_1}^+$
      \item $\chi_{\{u>\phi_1\}} [(u/\phi_1)^{1+\g}-1] \leq 4 u_{\phi_1}^+ \leq 4 \lambda. $
    \end{itemize}
   Then 
    \[
     \begin{aligned}
      &\left| \int_{\R^n}\int_{\R^n}\frac{\g}{\g+1} \chi_{\{u<\phi\}} ((u/\phi)^{\g+1}-1) \nabla \phi_1  \nabla L(x-y)[u(y)-u(x)] \ dx \ dy \right| \\
      &\leq C \lambda^2
       \left| \int_{\R^n}\int_{\R^n} \chi_{\{u<\phi\}} \nabla L(x-y)[u(y)-u(x)] \ dx \ dy \right| \\ 
     \end{aligned}
    \] 
   These terms are all bounded as before. Notice that we have $\lambda^2$ on the outside. Then all nonloacl terms on the right hand side are bounded by
   $C\lambda^{(2+\g)/(1+\g)}$. The local term div$(D(u)\nabla u)$ is handled in the usual manner by use of Cauchy-Schwarz.

   \textbf{Second Step: Using the ``good'' spatial piece.}
   We now utilize the two nonnegative pieces. From Proposition \ref{p:gamma} we have 
    \[
      \left[ \left(\frac{u}{\phi_1}\right)^{\g+1}-\phi_1  \right]_+ \geq 4 (u_{\phi_1}^+)^{1+\g}.
    \]
    Then we conclude that 
    \begin{equation}  \label{e:phi1}
     \begin{aligned}
      &\int_{\R^n} \int_{-4}^0 \int_{-4}^0\frac{u_{\phi_1}^+(t)u_{\phi_1}^-(s)}{t-s}^{1+\alpha}   \\
      &\quad + \int_{-4}^0 \int_{\R^n}\int_{\R^n}\frac{(u_{\phi_1}^+)^{\g+1}(x)u_{\phi_1}^-(y)}{|x-y|^{n+2-2\sigma}}\\
      &\leq C \lambda^{(2+\g)/(1+\g)}. 
     \end{aligned}
    \end{equation}
   Since we used $\Psi_{\lambda^3}$, replacing $\phi_1$ with $\phi_3$ we have the same inequality but with the bound $C\lambda^{3(2+\g)/(1+\g)}$.  

  We now show how the inequality \eqref{e:phi1} and its analogue for $\phi_3$ are enough to prove the remainder of the Lemma as in \cite{acv15}. 
  We note that 
  for the proof as written in \cite{acv15} to work we need $3(2+\g)/(1+\g)>5$ which is achieved for $\gamma$ small enough. We first utilize 
   \[
    \int_{-4}^0 \int_{\R^n}\int_{\R^n}\frac{(u_{\phi_1}^+)^{\g+1}(x)u_{\phi_1}^-(y)}{|x-y|^{n+2-2\sigma}}\\
    \leq C \lambda^{(2+\g)/(1+\g)}.
   \]
  From our hypothesis
   \[
    |\{w<\phi_0\} \cap ((-4,-2)\times B_1)| \geq \rho.
   \]  
  Then the set of times $\Sigma \in (-4,-2)$ for which $|\{u(t,\cdot)<\phi_0\} \cap B_1| \geq \rho/4$ has atleast measure $\rho/(2|B_1|)$. 
  And so
   \[
    C\lambda^{(2+\g)/(1+\g)} 
     \geq c\rho \int_{\Sigma} \int_{\R^n} (u_{\phi_1}^+)^{1+\g} \ dx \ dt 
   \]
  Now $(\{u-\phi_2>0\}\cap (\Sigma \times B_2)) \subset (\{u-\phi_1>\lambda/2\}\cap (\Sigma \times B_2))$, and so from Tchebychev's inequality 
   \[
    |\{u-\phi_2>0\}\cap (\Sigma \times B_2)| \leq \frac{C}{\rho} \lambda^{\frac{2+\g}{1+\g}-(1+\g)}.
   \]
  The exponent on $\lambda$ is positive for $\g$ small enough. We write this as
   \[
   |\{u\leq \phi_2\}\cap (\Sigma \times B_2)| \geq |\Sigma \times B_2| - \frac{C}{\rho} \lambda^{\frac{2+\g}{1+\g}-(1+\g)}
    \geq \rho/2 - \frac{C}{\rho} \lambda^{\frac{2+\g}{1+\g}-(1+\g)}.
   \]
  This will be positive for $\lambda$ small enough depending on $n,\sigma, \alpha, \gamma, \rho$. 
  
  The proof then proceeds just as in \cite{acv15} where we then utilize $3(2+\g)/(1+\g)>5$ as well as the analogue of \eqref{e:phi1}
  for $\phi_3$.  
  \end{proof}

 This next Lemma will imply Lemma \ref{l:2down}. 
 For this next lemma we define
  \[
   \psi_{\tau, \lambda} = ((|x|-1/\lambda^{4/\sigma})^{\tau}-1)_+ + ((|t|-1/\lambda^{4/\alpha})^{\tau}-1)_+ 
  \]

 \begin{lemma}  \label{l:oscdec}
  Given $\rho>0$ there exist $\tau>0$ and $\mu_1$ such that for any solution to \eqref{e:2delaprox} in $\R^n \times (a,0)$ with $a<-4$ and 
  $|f|\leq \lambda^3$ satisfying
   \[
    - \psi_{\tau, \lambda} \leq u \leq 1 + \psi_{\tau, \lambda},
   \]
  If 
   \[
    |\{u<\phi_0\} \cap (B_1 \times (-4,-2))| > \rho,
   \]
  then 
   \[
    \sup_{B_1 \times (-1,0)} u \leq 1- \mu_1.
   \]
 \end{lemma}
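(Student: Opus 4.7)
The plan is to use Lemma \ref{l:decintime} as a first step and then promote the resulting smallness into a pointwise bound via a De Giorgi iteration in the spirit of Lemma \ref{l:pulldown}, now with cut-offs that sit near the level $1$ rather than near $1/2$.

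First I would verify that the hypotheses of Lemma \ref{l:decintime} are satisfied. By choosing $\tau \leq \nu$ (where $\nu$ is the exponent used in Lemma \ref{l:decintime}) and requiring $\lambda$ small enough that both $1/\lambda^{4/\sigma}$ and $1/\lambda^{4/\alpha}$ exceed $\lambda^{-1/\nu}$, the assumption $-\psi_{\tau,\lambda}\leq u\leq 1+\psi_{\tau,\lambda}$ implies the tail bound $-\psi_{\lambda^3}\leq u\leq 1+\psi_{\lambda^3}$ required there. Lemma \ref{l:decintime} then yields
\[
|\{u>\phi_4\}\cap (\R^n\times(-2,0))| \leq \kappa,
\]
with $\kappa$ as small as desired provided $\lambda$ is sufficiently small (depending on $\rho$ through $\lambda_0$).

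Second, I would iterate from above in the style of Lemma \ref{l:pulldown}, using a sequence of cut-offs $\psi_k$ constructed from the $\theta_k$ and $\xi_k$ families of Section \ref{s:pullup}, interpolating between $\phi_4$ at $k=0$ and a barrier approaching $1-\mu_1 + \psi_{\tau,\lambda}$ as $k\to\infty$. Using $F'((u-\psi_k)_+/\psi_k)$ as test function as in \eqref{e:test2}, and taking advantage of the fact that $\psi_k \geq 1/2$ so that the coefficient $u$ multiplying $\nabla(-\Delta)^{-\sigma}u$ appears as $D(u)=d_1u+d_2$ with $d_2\geq 1/2$ (the nondegenerate case handled at the end of Section \ref{s:pullup}), one reproduces the energy inequality of Lemma \ref{l:pulldown}: a time fractional Sobolev norm and a space fractional Sobolev norm of $(u-\psi_k)_+$ on the left, bounded above by $C^k \iint \chi_{\{u>\psi_k\}}$. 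The nonlinear recursion $M_k := \iint (u-\psi_k)_+^p$, $M_k \leq C^k M_{k-1}^{p/b}$, produced as in \eqref{e:minkowski}--\eqref{e:iteration} (with $p/b>1$), then drives $M_k\to 0$ provided $M_0$ is small, which is exactly what the first step delivers once $\kappa$ is taken small enough.

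The main obstacle will be controlling the tail contributions at infinity uniformly along the iteration. Each nonlocal cross term produced by testing against the growing cut-off $\psi_k$ is acceptable only when the growth exponent $\tau$ of $\psi_{\tau,\lambda}$ is small enough to give $(2-2\sigma-2\tau)/\tau \geq 2$ in space (so that the analogue of \eqref{e:wingbound} produces a positive power of $\lambda$) and $(\alpha-\tau)/\tau \geq 2$ in time (so that $\D_t^{\alpha}\psi_{\tau,\lambda}$ is controlled as in the bound on $-D_t^{\alpha}\psi_{\lambda^3}$ within the proof of Lemma \ref{l:decintime}). A secondary subtlety is that each $\psi_k$ must remain above $1/2$ throughout the iteration in order for the nondegenerate structure to persist; this forces $\mu_1 < 1/2$, which is exactly the regime needed for the decrease-of-oscillation step carried out in Section \ref{s:regularity}.
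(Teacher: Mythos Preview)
Your outline is sound, and the first step is exactly what the paper does: Lemma \ref{l:decintime} produces
\(|\{u>\phi_4\}\cap(\R^n\times(-2,0))|\leq\kappa\).
Where you diverge is in the second step. You propose to run a fresh De Giorgi iteration from above with new cut-offs interpolating between $\phi_4$ and $1-\mu_1$, essentially reproving Lemma~\ref{l:pulldown} at a different level. The paper instead \emph{rescales}: set $w(t,x)=(u(t,x)-(1-\lambda^4))/\lambda^4$, choose $\tau$ small so that the tail $\psi_{\tau,\lambda}/\lambda^4$ still sits under an admissible barrier, and observe that $w$ solves \eqref{e:2delaprox} with coefficient $D_2(w)=D(\lambda^4 w+(1-\lambda^4))$, i.e.\ $d_1=\lambda^4$, $d_2=1-\lambda^4\geq 1/2$. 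Since $\{u>\phi_4\}\cap\Gamma_2=\{w>1/2\}\cap\Gamma_2$, Lemma~\ref{l:pulldown} applies to $w$ \emph{as a black box} and gives $w\leq 1-\mu_1$ on $\Gamma_1$, hence $u\leq 1-\lambda^4\mu_1$. This is why Section~\ref{s:lemmas} went to the trouble of formulating all the lemmas for general $D(u)=d_1u+d_2$: the rescaling in the dependent variable moves you into the genuinely nondegenerate regime.

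Your remark that ``$\psi_k\geq 1/2$ so the coefficient $u$ appears as $D(u)=d_1u+d_2$ with $d_2\geq 1/2$'' is not quite right as stated: the equation for $u$ itself still has $d_2=0$; knowing $u\geq 1/2$ on the support of the test function does not change the structure of the operator. One can of course split $u=(u-\tfrac12)+\tfrac12$ inside the energy integral and argue directly, which is essentially equivalent to the paper's rescaling, but it is the rescaling --- not the location of the cut-off --- that delivers the nondegenerate coefficient. Your route would work after this is straightened out, but it reproves machinery already packaged in Lemma~\ref{l:pulldown}; the paper's argument is a few lines.
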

 
 \begin{proof}
  We consider the rescaled function $w(t,x)= (u-(1-\lambda^4))/\lambda^4$. We fix $\tau$ small enough such that
   \[
    \frac{(|x|^{\tau}-1)_+}{\lambda^4} \leq (|x|^{\sigma/4}-1)_+ \text{  and  }
    \frac{(|t|^{\tau}-1)_+}{\lambda^4} \leq (|t|^{\sigma/4}-1)_+
   \]
  Then $w$ satisfies equation \eqref{e:2delaprox} with $D_2(x) = D_1(\lambda^4 x + (1-\lambda^4))$ where $D_1$ is the coefficient for the equation $u$ satisfies. 
  From our hypothesis and Lemma \ref{l:decintime}
   \[
    |\{w>1/2\} \cap (B_2 \times [-2,0])| = |\{u>\phi_4\} \cap (B_2 \times [-2,0])| \leq \rho.
   \]
  Then from Lemma \ref{l:pulldown} we conclude that $w\leq 1-\mu_1$ on $(-1,0)\times B_1$, and so $u\leq 1-\lambda^4 \mu_1= 1-\mu_2$.  
 \end{proof}

\section{Proof of Regularity}   \label{s:regularity}
 With Lemmas \ref{l:pullup}, \ref{l:pulldown}, and \ref{l:2down} we are ready to finish the proof of Theorem \ref{t:continuity}. We first mention
 that solutions of \eqref{e:main} satisfy the following scaling property: If $u$ is a solution on $(a,0)\times \R^n$, then $v(t,x)=Au(Bt,Cx)$ is a solution
 on $(a/B,0)\times \R^n$ if $A=B^{\alpha}C^{2\sigma-2}$.  
 The method of proof is given in \cite{cfv13} which we now briefly outline. We take any point 
 $p=(x,t)\in \R^n \times (a, T) $ and prove that $u$ is H\"older continuous around $p$. The H\"older continuity exponent will depend only
 on $\alpha, \sigma, n$. The constant will depend on the $L^{\infty}$ norm of $u,f$ and  on the $C^2$ norm of $u(a,x)$.
   By translation we assume that $p=(0,0)$. By scaling we assume that 
 $0\leq u(t,x)\leq \overline{\Psi}(t,x)$ and $|f|\leq \lambda^3$ for $\lambda$ as defined in  
 
 We now take a positive constant $M<1/4$ such that for $0<K\leq M$
  \[
   \frac{1}{1-\mu_2/2} \psi_{\tau,\lambda^3}(Kt,Mx).
  \]
 $M$ will depend only on $\lambda, \mu_2$ and $\tau>0$. During the iteration we have the following alternative. 
 
 \textit{Alternative 1.}
 Suppose that we can apply Lemma \ref{l:2down} repeatedly. We then consider the rescaled functions 
  \[
   u_{j+1}(t,x)= \frac{1}{1-\mu_2/4} u_j(M_1t, Mx), \quad M_1 = \left(\frac{M^{2-2\sigma}}{1-\mu_2/4} \right)^{1/\alpha}. 
  \]
 Notice that $M_1 < M$. All the $u_j$ satisfy the same equation. If we can apply Lemma \ref{l:2down} at every step, then $u_j\leq 1-\mu_2$ on 
 the cylinder $\Gamma_1$. This implies H\"older regularity around $p$ and also implies $u(p)=0$ .
 
 \textit{Alternative 2.}
 If at some point the assumption \eqref{e:2down} fails, then we are in the situation of Lemma \ref{l:pullup} and 
  \[
   0<\mu_0 \leq u_j(t,x)\leq 1.
  \]
 Scaling the above situation our equations will have $D(u)=d_1 u + d_2$ with $d_2>0$. We may then repeat the procedure since Lemma \ref{l:pullup}
 and \ref{l:2down} apply also in this situation.

\section{appendix}

  \begin{proof}[Proof of Lemma \ref{l:control}]
   Since throughout the paper we only require $\gamma$ small when $\sigma$ is small, we will prove the Lemma for $\gamma=1/k$ for $k\in \N$. 
   Now we assume without loss of generality that 
    \[
     \frac{u^{\g+1}(y)}{\phi^{\g}(y)} - \frac{u^{\g+1}(x)}{\phi^{\g}(x)} \geq 0. 
    \]
   We first assume that $|u(x)-u(y)|\leq 4|\phi(x)-\phi(y)|$.
   We need to bound 
    \begin{equation}  \label{e:quotient}
     \frac{u^{\g+1}(y)}{\phi^{\g}(y)} - \frac{u^{\g+1}(x)}{\phi^{\g}(x)}. 
    \end{equation}
   We first notice that term above in \eqref{e:quotient} will be larger if we assume
   that $u(y)\geq u(x)$ and $\phi(x)\geq \phi(y)$ without changing $|u(x)-u(y)|$ and $|\phi(x)-\phi(y)|$. 
   Furthermore, the term in \eqref{e:quotient} will still be greater if $u(y)=\phi(y)$ and not 
   changing $|u(y)-u(x)|$. We are then looking for the bound
    \[
     u(y)- \frac{u^{1+\g}(x)}{\phi^{\g}(x)} \leq c_2|\phi(x)-\phi(y)| =c_2(\phi(x)-u(y)). 
    \]
   Thus, for a constant $l$ we need the bound 
    \begin{equation}  \label{e:mubound}
     l - \frac{(l-\upsilon)^{1+\g}}{(l+ \mu)^{\g}} \leq C \mu.
    \end{equation}
   Recalling that we are assuming $4|\phi(x)-\phi(y)|\geq |u(x)-u(y)|$ (or $\upsilon \leq 4\mu$)
   the above term is maximized when $\upsilon$ is largest or when $\upsilon=4\mu$.
   Now
    \[
     \begin{aligned}
      &l - \frac{(l-4\mu)^{1+\g}}{(l+ \mu)^{\g}} \\
      &=  \frac{l[(l+\mu)^\g -(l-4\mu)^{\g}]}{(l+\mu)^{\g}} + 4\frac{\mu(l-4\mu)^{\g}}{(l+\mu)^{-\g}} \\
      &:= L_1 + L_2
     \end{aligned}
    \]
   It is clear that
    \[
     L_2 \leq 4\mu.
    \]
   To control $L_1$ we first consider when $l-4\mu \leq l/2$. Then $l\leq 8\mu$ and it is clearly true that
    \[
     L_1 \leq 8 \mu.
    \]
   Now when $l-4\mu \geq l/2$, from the concavity of $x^\gamma$ we have
    \[
     L_1 \leq \frac{l}{l-4\mu}\frac{(l-4\mu)^{\g}}{(l+\mu)^{\g}}5\mu \leq 10 \mu.
    \]
   Then 
    \begin{equation}   \label{e:mu1}
     l - \frac{(l-\upsilon)^{1+\g}}{(l+ \mu)^{\g}} \leq 14 \mu,
    \end{equation}
   and \eqref{e:control2} is proven with constant $c_2=14$. 
  
   We now assume $4|\phi(x)-\phi(y)|\leq |u(x)-u(y)|$, and the left hand side of \eqref{e:mubound} 
   is maximized again when $4\mu=\upsilon$ and so we have
    \[
     l - \frac{(l-\upsilon)^{1+\g}}{(l+ \upsilon/4)^{\g}} \leq \frac{14}{4}\upsilon,
    \]
   which is just \eqref{e:mu1} rewritten with the substitution $\mu=\upsilon/4$. Then  
    \begin{equation}  \label{e:Abound1}
      \frac{u^{\g+1}(y)}{\phi^{\g}(y)} - \frac{u^{\g+1}(x)}{\phi^{\g}(x)}  \leq \frac{14}{4}|u(x)-u(y)|
     \leq \frac{14}{3} |u_{\phi}^-(x)-u_{\phi}^-(y)|.
    \end{equation}
   and the right hand side of \eqref{e:control1} is shown. 
   
   Now  
    \[
     \frac{u^{\g+1}(y)}{\phi^{\g}(y)} - \frac{u^{\g+1}(x)}{\phi^{\g}(x)}
      = \frac{u^{\g+1}(y)-u^{\g+1}(x)}{\phi^{\g}(y)} 
      - u^{\g+1}(x)\frac{\phi^{\g}(x)-\phi^{\g}(y)}{\phi^{\g}(x)\phi^{\g}(y)} := M_1 + M_2
    \]
   We suppose $\gamma = 1/k$. By factoring we have 
    \[
     \begin{aligned}
      |M_1| &= \frac{|u(y)-u(x)|}{\phi^{\g}(y)} \frac{\sum_{j=0}^k u^{(k-j)/k}(y)u^{j/k}(x)}{\sum_{j=0}^{k-1} u^{(k-1-j)/k}(y)u^{j/k}(x)} \\
            &\geq \frac{u(x)}{\phi^{\g}(y)} \frac{4|\phi(x)-\phi(y)|}{\sum_{j=0}^{k-1} u^{(k-1-j)/k}(y)u^{j/k}(x)} \\
      |M_2| &= \frac{u^{1+1/k}(x)}{\phi^{1/k}(x)\phi^{1/k}(y)} 
                  \frac{|\phi(x)-\phi(y)|}{\sum_{j=0}^{k-1} \phi^{(k-1-j)/k}(y)\phi^{j/k}(x)} \\
            &\leq \frac{u(x)}{\phi^{\g}(y)} \frac{|\phi(x)-\phi(y)|}{\sum_{j=0}^{k-1} u^{(k-1-j)/k}(y)u^{j/k}(x)} \\
     \end{aligned}
    \]
   Thus $M_2 \leq M_1 / 4$. Thus, $M_1$ is the dominant term. We then have from the convexity of $x^{\g+1}$
    \[
     \begin{aligned}
      M_1 + M_2 \geq M_1/2 &\geq \frac{1}{2} \frac{u^{\g+1}(y)-u^{\g+1}(x)}{\phi^{\g}(y)} \\
                           &\geq  \frac{1}{2} \frac{(u(y)-u(x))^{1+\g}}{\phi^{\g}(y)} \\
                           &\geq (4/5)^{1+\g} \frac{1}{2}\frac{[u_{\phi}^-(y)-u_{\phi}^-(x)]^{1+\g}}{\phi^{\g}(y)} \\
                           &\geq  \frac{2}{5}(4/5)^{\g}\frac{[u_{\phi}^-(y)-u_{\phi}^-(x)]^{1+\g}}{\phi^{\g}(y)}.
     \end{aligned}
    \]
  \end{proof}

\begin{proposition}  \label{p:gamma}
  Let $F$ be a function satisfying $F,F'' \geq 0$ for $x \geq 0$. Assume also $F(0)=0$. If $y \geq x \geq 0$, then 
   \[
    F(y)-F(x) \geq F(y-x).
   \]
 \end{proposition}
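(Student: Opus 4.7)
My plan is to use the fundamental theorem of calculus together with the monotonicity of $F'$ implied by the hypothesis $F'' \geq 0$. Given $y \geq x \geq 0$, I would write
\[
 F(y) - F(x) = \int_0^{y-x} F'(x+s)\, ds,
\]
by the substitution $t = x+s$, and similarly, using $F(0)=0$,
\[
 F(y-x) = F(y-x)-F(0) = \int_0^{y-x} F'(s)\, ds.
\]
Subtracting these two identities gives
\[
 F(y)-F(x)-F(y-x) = \int_0^{y-x} \bigl[F'(x+s)-F'(s)\bigr]\, ds.
\]
Since $F''\geq 0$ on $[0,\infty)$, the derivative $F'$ is nondecreasing there; because $x+s\geq s\geq 0$, the integrand is pointwise nonnegative, and the conclusion follows.

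An alternative route, which avoids any differentiability assumption beyond convexity, is to observe that a convex $F$ with $F(0)=0$ is superadditive on $[0,\infty)$: the function $x\mapsto F(x)/x$ is nondecreasing (by comparing the secants from the origin), so for $a,b>0$ one has $F(a)/a\leq F(a+b)/(a+b)$ and $F(b)/b\leq F(a+b)/(a+b)$, and adding after multiplying by $a/(a+b)$ and $b/(a+b)$ respectively yields $F(a)+F(b)\leq F(a+b)$. Applied with $a=y-x$ and $b=x$ this gives $F(y-x)+F(x)\leq F(y)$, which is the claim.

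There is no real obstacle here; the hypothesis $F\geq 0$ is not even needed for the FTC argument (only $F'' \geq 0$ and $F(0)=0$ are used), and the proof is essentially a one-line computation once the integrals are set up. I would present the FTC version since it is the most transparent and keeps the regularity assumptions on $F$ implicit in the use of $F''$.
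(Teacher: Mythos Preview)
Your proof is correct and is essentially the same as the paper's: both arguments reduce to the monotonicity of $F'$ (from $F''\geq 0$), with the paper phrasing this as ``the difference quotient $x\mapsto (F(x+h)-F(x))/h$ is nondecreasing, hence $\geq F(h)/h$ at $x=0$,'' while you integrate the pointwise inequality $F'(x+s)\geq F'(s)$ directly. Your observation that the hypothesis $F\geq 0$ is unused is also correct.
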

 
 \begin{proof}
  For fixed $h>0$, 
   \[
    \frac{d}{dx} \frac{F(x+h)-F(x)}{h} = \frac{F'(x+h)-F'(x)}{h} \geq 0.
   \]
  Then for $x,h\geq 0$
   \[
    \frac{F(x+h)-F(x)}{h} \geq \frac{F(0+h)-F(0)}{h} = \frac{F(h)}{h}.
   \]
  Let $h=y-x$, and multiply both sides of the equation by $y-x$. 
 \end{proof}

 \begin{proposition} \label{p:exp}
  Let $0<\gamma<1$. Let $x,d\geq 0$. Then 
   \[
    (x+d)^{\g} -d^{\g} \leq 2^{\g} x^{\g}
   \]
 \end{proposition}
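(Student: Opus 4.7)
The plan is to reduce Proposition \ref{p:exp} to the subadditivity of $t \mapsto t^{\gamma}$ on $[0,\infty)$ for $\gamma \in (0,1)$, which will in fact yield the slightly stronger bound $(x+d)^{\gamma}-d^{\gamma} \leq x^{\gamma} \leq 2^{\gamma}x^{\gamma}$.

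First I would dispose of trivial cases: if $x=0$ the claim reads $d^{\gamma}-d^{\gamma} \leq 0$, which holds; so assume $x>0$ and similarly $d>0$ (otherwise the inequality is immediate). Then I would establish the subadditivity inequality $(x+d)^{\gamma} \leq x^{\gamma}+d^{\gamma}$. The cleanest way is to set $t = x/(x+d) \in (0,1)$ so $1-t = d/(x+d)$, and to observe that since $0<\gamma<1$ and $0 \leq t, 1-t \leq 1$ we have $t^{\gamma} \geq t$ and $(1-t)^{\gamma} \geq 1-t$; summing gives $t^{\gamma}+(1-t)^{\gamma} \geq 1$, and multiplying through by $(x+d)^{\gamma}$ yields the subadditivity bound.

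Rearranging, $(x+d)^{\gamma}-d^{\gamma} \leq x^{\gamma}$, and since $2^{\gamma} \geq 1$ the claim $(x+d)^{\gamma}-d^{\gamma} \leq 2^{\gamma}x^{\gamma}$ follows at once. There is no real obstacle here; the only mild subtlety is making sure the argument covers the boundary cases $x=0$ or $d=0$ (both trivial). If one prefers to avoid invoking subadditivity, an equally short alternative is a case split on whether $d \leq x$ or $d \geq x$: in the first case $(x+d)^{\gamma} \leq (2x)^{\gamma}=2^{\gamma}x^{\gamma}$ and we drop the $-d^{\gamma}$ term; in the second case the mean value theorem gives $(x+d)^{\gamma}-d^{\gamma} = \gamma \xi^{\gamma-1} x$ for some $\xi \in [d,x+d]$, and since $\gamma-1<0$ and $\xi \geq d \geq x$, we get $\xi^{\gamma-1} \leq x^{\gamma-1}$, hence $(x+d)^{\gamma}-d^{\gamma} \leq \gamma x^{\gamma} \leq 2^{\gamma}x^{\gamma}$.
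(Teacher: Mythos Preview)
Your proposal is correct. Your primary argument via subadditivity of $t\mapsto t^{\gamma}$ is a genuinely different (and cleaner) route than the paper's: the paper does exactly the case split you describe as your alternative, distinguishing $x\leq d$ (using concavity to get $(x+d)^{\gamma}-d^{\gamma}\leq \gamma d^{\gamma-1}x\leq \gamma x^{\gamma}$) and $x>d$ (using $(x+d)^{\gamma}\leq (2x)^{\gamma}$). Your subadditivity argument has the advantage of giving the sharper bound $(x+d)^{\gamma}-d^{\gamma}\leq x^{\gamma}$ in one stroke, with no case analysis; the paper's approach, while slightly longer, has the minor virtue of making transparent where the factor $2^{\gamma}$ actually enters (only in the regime $x>d$). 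Your alternative via the mean value theorem is essentially the paper's proof with the concavity step phrased differently.
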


 \begin{proof}
  First assume $x\leq d$. From the concavity of $x^{\g}$ we have
   \[
    (x+d)^{\g}-d^{\g}\leq \g d^{\g-1}x = \g d^{\g-1}x^{1-\g}x^{\g} \leq \g x^{\g}.
   \]
  If on the other hand $x >d$, then
   \[
    (x+d)^{\g}-d^{\g}\leq (x+d)^{\g}\leq (2x)^{\g}.
   \]
 \end{proof}

\bibliographystyle{amsplain}
\bibliography{refpme}

\end{document}